\documentclass[10pt]{article}

\usepackage{sbc-template} 
\usepackage{graphicx,url}
\usepackage{url}
\usepackage[brazil]{babel} 
\usepackage[utf8]{inputenc} 
\usepackage[T1]{fontenc}
\usepackage[normalem]{ulem}
\usepackage[hidelinks]{hyperref}

\usepackage[square, numbers]{natbib}
\usepackage{amssymb} 
\usepackage{mathalfa} 
\usepackage{algorithm} 
\usepackage{algpseudocode} 
\usepackage[table]{xcolor}
\usepackage{array}
\usepackage{titlesec}
\usepackage{mdframed}
\usepackage{listings}
\usepackage{mathrsfs}
\usepackage{amsmath}
\usepackage{booktabs}
\usepackage{amsthm}
\usepackage{bigints}

\usepackage{fancyhdr}
\newtheorem{theorem}{Theorem} [section]
\newtheorem{proposition}{Proposition} [section]
\newtheorem{lemma}{Lemma} [section]
\newtheorem{corollary}{Corollary} [section]
\newtheorem{remark}{Remark} [section]

\DeclareMathOperator{\sgn}{sgn}

\numberwithin{equation}{section}

\newcolumntype{L}[1]{>{\raggedright\let\newline\\\arraybackslash\hspace{0pt}}m{#1}}
\newcolumntype{C}[1]{>{\centering\let\newline\\\arraybackslash\hspace{0pt}}m{#1}}
\newcolumntype{R}[1]{>{\raggedleft\let\newline\\\arraybackslash\hspace{0pt}}m{#1}}

\usepackage[nolist,nohyperlinks]{acronym}

\title{Well-Posedness of the Schrödinger - Intermediate Long Wave system}

\author{Diego F. Correa-Casta\~neda}
\address{Instituto de Matematica Pura e Aplicada - IMPA\\Rio de Janeiro 22460-320, Brazil
\email{diego.castaneda@impa.br}
    }

\begin{document} 
	
	\maketitle

	\begin{resumo} 
The system coupled by the Schrödinger equation and the Intermediate Long Wave (ILW) equation is a particular model describing the interaction of long and short waves. 
We prove local and global well-posedness of the initial value problem associated with this system in Sobolev spaces of low regularity. Our approach relies on the use of energy estimates, Bourgain spaces, and use of Tao's gauge transformation.   \end{resumo}

\section{Introduction}

We consider the following coupled system:
    \begin{equation}
        \label{Schr-ILW} 
        \begin{cases}
             iu_t + u_{xx}= \alpha uv+\gamma\,|u|^2u  \\ 
             v_t-\nu\,\mathscr{G}_{\delta}v_{xx}+\rho\, vv_{x}=\beta\left(|u|^2\right)_x
        \end{cases}
        \,\,\,\,\,\,(t,x) \in \mathbb{R}^2,
    \end{equation}
where $u(t,x)$ is a complex-valued function, $v(t,x)$ is a real-valued function, and  $\alpha, \beta, \gamma, \nu$ and $\rho$ are real constants. The parameter $\delta>0$ characterizes the depth of the fluid, and $\mathscr{G}_{\delta}$ is the Fourier multiplier defined by the symbol $ -i\big(\coth{(\delta\xi)}-\frac{1}{\delta\xi}\big)$.  \\ \\ This system is a realization of the general framework describing interactions between long waves (L) and short waves (S) under weakly coupled nonlinearities: \begin{equation} \begin{cases} \label{general} i S_t + i c_S S_x + S_{xx} = \alpha S L + \gamma |S|^2 S \\ L_t + c_L L_x + \nu P(D_x) L + \lambda (L^2)_{x} = \beta (|S|^2)_x. \end{cases} \end{equation} Such systems arise in various physical contexts, ranging from capillary-gravity waves to plasma physics (see \cite{benilov}, \cite{djordjevic1977}, \cite{satsuma1979}). Specifically, \eqref{Schr-ILW} represents a coupling between a nonlinear Schrödinger-type equation and the Intermediate Long Wave (ILW) equation:
\begin{equation}
    \label{ILW} v_t-\,\mathscr{G}_{\delta}v_{xx}+\, vv_{x}=0. 
\end{equation}  Formally, when $\delta\rightarrow\infty$, the operator $\mathscr{G}_{\delta}$ tends to the Hilbert transform $\mathscr{H}$, which is defined as the Fourier multiplier by $-i\sgn(\xi)$, obtaining the Benjamin-Ono equation (BO) (see \cite{abdelouhab1989nonlocal}, \cite{chapouto}): 
\begin{equation}
    \label{BO} v_t-\,\mathscr{H}v_{xx}+\, vv_{x}=0.
\end{equation}\\ The limit case $\delta\rightarrow0$ can be considered as follows: define the scale transformation 
\begin{equation*}
    v_\delta(t,x):=3\delta^{-1}v(3\delta^{-1}t,x),
\end{equation*}
which transforms \eqref{ILW} into the following scaled ILW equation:
\begin{equation}
    \label{s-ILW} \partial_tv_\delta-3\delta^{-1}\mathscr{G}_\delta\partial_x^2v_\delta+v_\delta\partial_x v_\delta = 0.
\end{equation}
Then, under suitable assumptions, when $\delta\rightarrow0$, equation \eqref{s-ILW} is known to converge to the Korteweg-de Vries equation (KdV) (see \cite{abdelouhab1989nonlocal}, \cite{chapouto2025shallowwaterconvergenceintermediatelong}):
\begin{equation}
    \label{KdV} v_t+v_{xxx}+vv_x=0.
\end{equation} We also remark that while equations \eqref{ILW}, \eqref{BO}, and \eqref{KdV} are completely integrable systems, this does not appear to be the case for the system \eqref{Schr-ILW}. Nonetheless, the system \eqref{Schr-ILW} possesses several conserved quantities. For smooth solutions, the following functionals remain invariant on time:
\begin{equation}
    \label{C1} \mathcal{E}_1(t):=\int_{\mathbb{R}}v(t,x)\,dx = \mathcal{E}_1(0),
\end{equation}
\begin{equation}
    \label{C2} \mathcal{E}_2(t):=\int_{\mathbb{R}}\left|u(t,x)\right|^2\,dx = \mathcal{E}_2(0),
\end{equation}
\begin{equation}
    \label{C3} \mathcal{E}_3(t):=\int_{\mathbb{R}}v^2(t,x)\,dx-\frac{2\beta}{\alpha}\text{Im}\int_{\mathbb{R}}u(t,x)\overline{u_x(t,x)}\,dx \,= \mathcal{E}_3(0),
\end{equation}
and 
\begin{equation}
    \begin{split}
        \mathcal{E}_4(t):= & \int_{\mathbb{R}}|u(t,x)|^2\,v(t,x)\,dx \, + \, \frac{\nu}{2\beta}\int_{\mathbb{R}}v(t,x)\,\mathcal{G}_\delta v_x(t,x) \,dx \, - \frac{\rho}{6\beta} \int_{\mathbb{R}} v^3(t,x)\,dx\, \\
         & + \, \frac{1}{\alpha}\int_{\mathbb{R}} |u_x(t,x)|^2\,dx\,+\, \frac{\gamma}{\alpha}\int_{\mathbb{R}}|u(t,x)|^4\,dx= \mathcal{E}_4(0).
    \end{split}
    \label{C4}  
\end{equation} Beyond their physical significance, these conservation laws ensure that local solutions in $H^1(\mathbb{R})\times H^{\frac{1}{2}}(\mathbb{R})$ can be extended globally. \\ \\ Our principal purpose in this work is to study the Cauchy problem associated to \eqref{Schr-ILW} in low regularity Sobolev spaces. In this regards, we know plenty of well-posedness results for the equations involved in this system individually but not for the system itself. \\ \\ We aim to establish the local well-posedness of the Cauchy problem associated with \eqref{Schr-ILW} for initial data in $H^{s+\frac{1}{2}}(\mathbb{R})\times H^s(\mathbb{R})$ for all $s\geq0$, and provide the global theory for the case $s=1/2$. Our main result in this manuscript is the following: \\
\label{Th2}\begin{theorem} Let $s \geq 0$ be given. In system \eqref{Schr-ILW}, take $\alpha=1$, $\beta=v$, $\rho=\nu^2$, and assume that $|\nu|\not=1$. For all $(u_0,v_0) \in H^{s+\frac{1}{2}}(\mathbb{R})\times H^s(\mathbb{R})$ and all $T > 0$, there exist $b>\frac{1}{2}$, $0<\theta<1$ and a solution 
    \begin{equation}
        (u,v) \in X^{s+\frac{1}{2},b}_\text{S}(T)\times\big(C_TH^S_x\cap L^4_TW^{s,4}_x\cap X^{s-\theta,\theta}_\text{BO}(T)\big)
    \end{equation}
    of the system \eqref{Schr-ILW}, such that 
    \begin{equation}
        u(0)= u_0, \, \, \, v(0)=v_0, \, \, \text{and}
    \end{equation}
    \begin{equation}
        w = \partial_x P_{hi}(e^{-\frac{\nu}{2}i F[u]}) \in Y^{s,\frac{1}{2}}(T), 
    \end{equation}
    where $F[u]$ is some primitive of $u$ defined in \eqref{primitive}. 
This solution is unique in the class $v \in L^\infty(]0, T[; L^2(\mathbb{R})) \cap L^4(]0, T[ \times \mathbb{R})$ and $w \in X^{0, \frac{1}{2}}_\text{BO}(T)$. Moreover, the flow-map data solution $u_0 \mapsto u$ is continuous from $H^s(\mathbb{R})$ into $C([0, T]; H^s(\mathbb{R}))$.\\
\end{theorem}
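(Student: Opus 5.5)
Our strategy follows the scheme of Molinet--Pilod for Benjamin--Ono, with the Schrödinger component handled by Bourgain-space estimates. The ILW component will be treated as a perturbation of Benjamin--Ono. We write
\[
\mathscr{G}_\delta\partial_x^2=\mathscr{H}\partial_x^2+\mathscr{K}_\delta\partial_x ,
\]
where $\mathscr{K}_\delta$ has symbol $-i\xi\big(\coth(\delta\xi)-\frac{1}{\delta\xi}-\sgn\xi\big)$. This symbol is bounded, so $\mathscr{K}_\delta\partial_x$ is a zeroth-order perturbation and every linear BO estimate transfers to the ILW group on bounded time intervals. The plan has five steps.

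\emph{Step 1: linear and bilinear estimates for the Schrödinger equation.} By the usual $X^{s,b}_{\rm S}$ contraction estimates, on $[0,T]$ we get
\[
\|u\|_{X^{s+1/2,b}_{\rm S}(T)}\lesssim \|u_0\|_{H^{s+1/2}}+T^{\epsilon}\Big(\|uv\|_{X^{s+1/2,b-1+\epsilon}_{\rm S}}+\|u\|^3\Big).
\]
The cubic term is routine. The key estimate is the bilinear bound
\[
\|uv\|_{X^{s+1/2,b-1}_{\rm S}}\lesssim \|u\|_{X^{s+1/2,b}_{\rm S}}\,\|v\|_{X^{s-\theta,\theta}_{\rm BO}\cap L^4W^{s,4}} .
\]
We prove it by splitting into frequency regions. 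The hypothesis $|\nu|\neq1$ guarantees a nondegenerate resonance function $|\tau_1-\xi_1^2 \mp \ldots|$ between the parabola and the BO dispersion $\nu\xi|\xi|$ in the high--low interactions. This nondegeneracy supplies the half derivative needed to pass from $v\in H^s$ to $u\in H^{s+1/2}$.

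\emph{Step 2: the coupling term in the $v$-equation.} Similarly, $\beta\partial_x|u|^2$ is placed in $X^{s-\theta,\theta-1}_{\rm BO}$ and estimated in the energy space via
\[
\Big|\int \partial_x(|u|^2)\,D^{2s}v\Big|\lesssim \|u\|_{X^{s+1/2,b}_{\rm S}}^2\|v\|_{X^{s-\theta,\theta}_{\rm BO}} ,
\]
again using the nonresonance from $|\nu|\neq 1$.

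\emph{Step 3: the gauge transformation for $vv_x$.} The obstruction here is the high--low interaction $v_{\rm lo}\,\partial_x v_{\rm hi}$ in the term $\rho vv_x$. The relations $\rho=\nu^2$, $\alpha=1$, $\beta=\nu$ are chosen so that the gauge $w=\partial_xP_{hi}(e^{-\frac{\nu}{2}iF})$ from \eqref{primitive} satisfies
\[
w_t-\mathscr{H}w_{xx}=\text{(terms without derivative loss)} .
\]
The modification of $F$ in \eqref{primitive}, which includes the contribution of $\beta|u|^2$, absorbs the forcing. We then estimate $w$ in $Y^{s,1/2}$ by Tao's argument: use $X^{0,1/2}$ bilinear estimates and the bounds
\[
\|P_{+hi}v\|\lesssim\|w\|+\text{lower order}.
\]
This gives a priori bounds for $v$ in $L^\infty H^s\cap L^4W^{s,4}\cap X^{s-\theta,\theta}_{\rm BO}$ in terms of $w$.

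\emph{Step 4: closing the argument and passing to rough data.} Combining the three families of estimates for smooth solutions yields a closed a priori bound on a short interval $T_0=T_0(\|u_0\|,\|v_0\|)$. For rough data we take smooth approximations, whose existence is known from the classical energy method. Uniqueness and continuity of the flow follow from estimating differences, both at the level of $w_1-w_2$ in $X^{0,1/2}_{\rm BO}$ and of $v_1-v_2$ in $L^\infty L^2\cap L^4$. The conserved quantities \eqref{C2}--\eqref{C4} control the norms at $s=1/2$ and extend the solution to arbitrary $T$. For general $s$, persistence of regularity lets us iterate the local theory.

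\emph{Step 5: the main obstacle.} The difference estimate for the gauged variable is the hardest step. Here the nonlinear term $\nu uv$ in the $u$-equation produces, after gauging, a commutator involving $e^{-\frac{\nu}{2}iF}$ and $u$. This must be shown to lose no derivative in $Y^{0,1/2}$. We would first try to exploit the algebraic cancellation from $\rho=\nu^2$, and then use the Schrödinger smoothing in $X^{s+1/2,b}_{\rm S}$ to recover the remaining half derivative.
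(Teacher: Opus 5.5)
Your overall architecture matches the paper's: Bourgain spaces for $u$, a nonresonant $uv$ estimate, Tao's gauge for the $v$-equation with $\nu|u|^2$ placed in the equation \eqref{F} for $F$, then smooth approximations and difference estimates. However, the closing step in Step 4 does not work as you state it, because these estimates do not become contractive by shrinking $T$. In \eqref{3gauge} the factor multiplying $\|w\|_{X^{s,\frac12}_{\text{BO}}(T)}$ contains $\|v\|_{X^{-1,1}_{\text{BO}}(T)}\gtrsim\|v_0\|_{H^{-1}}$, since $X^{-1,1}_{\text{BO}}\hookrightarrow C_tH^{-1}_x$. In the resulting bootstrap the term $C_3N^0_T(1+N^0_T)N^s_T$ carries no power of $T$, while $N^0_T\ge\|v_0\|_{L^2}$.

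The paper therefore closes the continuity argument only under the smallness assumption $\|u_0\|_{H^{\frac12}}+\|v_0\|_{L^2}\le\varepsilon$; this is how \eqref{aprioriNT} is obtained. It uses the same smallness, together with $P_{\text{LO}}v_1(0)=P_{\text{LO}}v_2(0)$ (needed to bound $\|\tilde F(0)\|_{L^\infty}$ by $\|\tilde v(0)\|_{L^2}$), to get \eqref{key diff}. A time $T_0(\|u_0\|,\|v_0\|)$ for large data then needs a separate reduction to small data. The paper attempts this with the rescaling of Remark \ref{scalingch}; since the system is not scale invariant (the coupling constant and the depth parameter change), one must check that all smallness thresholds are uniform in $\lambda$. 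Your plan contains neither the smallness hypothesis nor such a reduction.

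Similarly, smooth solutions are not supplied by the classical energy method. The terms $J=\beta\int D^s_xv\,\partial_xD^s_x(|u|^2)\,dx$ and $K=2\alpha\,\text{Im}\int u\,D^{s+1}_x\overline{u}\,D^s_xv\,dx$ each lose half a derivative, so the appendix needs the corrected energy $E^s_m$. The hypotheses $\alpha=1$ and $\beta=\nu$ are used exactly there: to get $K+III_2=0$, and to reduce $J+I$ to the bounded symbol $F_\delta$. They are not used in the gauge computation, which only needs $\rho=\nu^2$.

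Your Step 5 also puts the difficulty in the wrong place. The $u$-equation is never gauged, and its term $uv$ (coefficient $\alpha=1$) is treated directly by \eqref{estimativa bilineal}, so no commutator between $e^{-\frac{\nu}{2}iF}$ and $u$ appears. The only new term created by the gauge is $-\frac{\nu^2}{2}i\,\partial_xP_{+\text{hi}}(e^{-\frac{\nu}{2}iF}|u|^2)$, together with its analogues in the difference equation \eqref{eclarga}. The paper bounds these in $L^2_TH^s_x$ using \eqref{fracexp} (or \eqref{diffracexp}) and the bilinear smoothing estimate \eqref{duplafg} for $\partial_x(f\overline{g})$, with no algebraic cancellation and no nonresonance condition.

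The estimate that actually carries the proof is the one you only assert in Step 1. Because $v$ is controlled only in $X^{s-\theta,\theta}_{\text{BO}}$, and not in $X^{s,b}_{\text{BO}}$ with $b>\frac12$, one needs \eqref{estimativa bilineal}. Its case analysis (the resonance function $R$, split according to $|R|\lessgtr\frac{|1-|\nu||}{2}\xi_1^2$) is where $|\nu|\neq1$ enters. It is also what forces $-\frac12<a<-\frac38$ and $1-2|a|<\theta<2|a|-\frac12$, so in particular $\theta<\frac12$.

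Two smaller corrections. First, $\mathscr{K}_\delta\partial_x$ is first order, not zeroth order: its symbol grows like $|\xi|/\delta$. Its transport part must therefore be absorbed into the linear flow, as the paper does with the Galilean change of variables \eqref{galileana}, leaving only the smoothing operator $\mathscr{Q}_\delta$ of \eqref{qdelta}. Second, persistence of regularity does not give arbitrary $T$ when $0\le s<\frac12$. The laws \eqref{C2}--\eqref{C4} control the $H^{\frac12}\times L^2$ level only for data in $H^1\times H^{\frac12}$, since $\mathcal{E}_3$ is not coercive. That is also the only case in which the paper extends solutions globally.
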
 

\begin{remark}
    Recently, Dai and Forlano in \cite{dai2026localglobalwellposednessextended} considered the so called extended Schrödinger-Benjamin Ono system:
    \begin{equation}
        \label{Schr-BO ext} 
        \begin{cases}
             iu_t + u_{xx}= \alpha uv+\gamma\,|u|^2u  \\ 
             v_t-\nu\,\mathscr{H}v_{xx}+\rho\, vv_{x}=\beta\left(|u|^2\right)_x
        \end{cases}
        \,\,\,\,\,\,(t,x) \in \mathbb{R}^2.
    \end{equation}
    They have obtained similar results as the one depicted above. Our analysis here is independent of the results in \cite{dai2026localglobalwellposednessextended}. \\ 
\end{remark} In \cite{linares2024well} Linares, Mendez, and Pilod studied the Cauchy problem associated to the system \eqref{Schr-BO ext} and established a local well-posedness result for smooth data. On the other hand, Chapouto, Li, Oh, and Pilod in \cite{chapouto} proved local and global well-posedness results for the ILW equation \eqref{ILW} in $H^s(M)$, $M=\mathbb{R}$ or $M=\mathbb{T}$, for $s\geq0$, using the approach introduced by Molinet and Pilod in \cite{molinet-pilod} to obtain similar results for the BO equation. These two facts inspired us to treat the Cauchy problem for system \eqref{Schr-ILW}. For a more complete account related to the ILW equation see \cite{abdelouhab1989nonlocal}, \cite{kleinsaut2022nonlinear}, and \cite{linares2024decayasymptoticpropertiessolutions}.  \\ \\Before describing the method employed in this work, it is instructive to review several related models of interest that arise as particular cases of \eqref{general}:
\begin{equation}
\begin{cases}
\label{Shcr-BO}
i u_t + u_{xx} = \alpha uv + \gamma|u|^2u  \\
v_t - \nu \mathscr{H}v_{xx} = \beta (|u|^2)_x
\end{cases}
\,\,\,\,\,\,(t,x) \in \mathbb{R}^2,
\end{equation}
and, 
\begin{equation}
\begin{cases}
\label{Shcr-quaseILW}
i u_t + u_{xx} = \alpha uv + \gamma|u|^2u  \\
v_t - \nu \mathscr{G}_\delta v_{xx} = \beta (|u|^2)_x
\end{cases}
\,\,\,\,\,\,(t,x) \in \mathbb{R}^2. 
\end{equation}  \\ The system \eqref{Shcr-BO} is known in the literature as the Schrödinger-Benjamin Ono system. Related to this system, we just mention that, using Bourgain spaces, Bekiranov, Ogawa and Ponce proved local well-posedness of the associated Cauchy problem with initial data in $H^{s}(\mathbb{R}) \times H^{s-\frac{1}{2}}(\mathbb{R})$ for all $s\geq0$, when $|\nu|\not=1$ (see \cite{MR1648479}). The case $|\nu|=1$ was covered by Pecher with a result of well-posedness in $H^{s}(\mathbb{R}) \times H^{s-\frac{1}{2}}(\mathbb{R})$ for all $s>0$ (see \cite{pecher2007rough}). We also comment that, in \cite{MR3449329},  Domingues obtained sharp local well-posedness results in $H^s(\mathbb{R})\times H^k(\mathbb{R})$ in the non-resonant case $|\nu|\not=1$ under some restrictions on the parameters $(s,k)$  which can be decoupled from $\left(s,s-\frac{1}{2}\right)$. We refer to \cite{linares2024well} for a  detailed review of the literature concerning the system \eqref{Shcr-BO}.   \\ \\Note that these studies can be replicated for the system \eqref{Shcr-quaseILW}. Indeed, following \cite{chapouto}, for each equation of \eqref{Shcr-quaseILW}  one can apply the Galilean transformation
\begin{equation}
    \label{galileana} \tilde{u}(t,x):=u\left(t,x+\nu\delta^{-1}t\right), \, \, \text{and,}
\, \, \tilde{v}(t,x):=v\left(t,x+\nu\delta^{-1}t\right), \, \, \, \, \, (t,x)\in\mathbb{R}^2.\end{equation}
which yields the following system (omitting the tildes):
\begin{equation}
    \begin{cases}
\label{Shcr-quaseILW-T}
i \left(u_t-\nu\delta^{-1}u_x\right) + u_{xx} = \alpha uv + \gamma|u|^2u  \\
v_t - \nu\mathscr{H}v_{xx}  = \nu\mathscr{Q}_{\delta}v_x+\beta (|u|^2)_x
\end{cases}
\,\,\,\,\,\,(t,x) \in \mathbb{R}^2,
\end{equation}
where $\mathscr{Q}_\delta:= \left(\tau_\delta-\mathscr{H}\right)\partial_x$ and $\tau_\delta$ is the Fourier multiplier by $-i\coth{\left(\delta\xi\right)}$. \\ \\ By considering the Duhamel formulation for \eqref{Shcr-quaseILW-T}, establishing crucial estimates in Bourgain spaces for each integral equation, and utilizing a contraction mapping argument, we obtain the local well-posedness of the associated Cauchy problem. Specifically, local well-posedness is established for initial data in $H^{s+\frac{1}{2}}(\mathbb{R}) \times H^s(\mathbb{R})$ for all $s\geq -\frac{1}{2}$ in the non-resonant case ($|\nu|\not=1$). In the resonant case ($|\nu|=1$), local well-posedness holds in $H^{s+\frac{1}{2}}(\mathbb{R}) \times H^s(\mathbb{R})$ for all $s>-\frac{1}{2}$. We remark that the natural Bourgain spaces associated with the first and second equations in \eqref{Shcr-quaseILW-T} are $X^{s+\frac{1}{2},b}_\text{S}$ and $X^{s,b}_\text{BO}$, respectively, with $b>\frac{1}{2}$ (see Subsection \ref{function spaces} below for the definition of the functional spaces).   \\ \\ We now return to the system \eqref{Schr-ILW}. Applying the Galilean transformation for this system, we get  
\begin{equation}
        \label{Schr-ILW-T} 
        \begin{cases}
             i\left(u_t-\nu\delta^{-1}u_x\right) + u_{xx}= \alpha uv+\gamma\,|u|^2u \\ 
             v_t-\nu\,\mathscr{H}v_{xx}+\rho\, vv_{x}=\nu \mathscr{Q}_{\delta}v_x+\beta\left(|u|^2\right)_x
        \end{cases}
        \,\,\,\,\,\,(t,x) \in \mathbb{R}^2.
    \end{equation} \\ 
In contrast to system \eqref{Shcr-quaseILW-T}, from the work of Molinet, Saut and Tzvetkov \cite{MR1885293} on the equation \eqref{BO}, it can be inferred that it is not plausible to directly apply a fixed-point argument to the system \eqref{Schr-ILW-T}. To overcome this obstacle, we use a compactness argument. \\ \\ First, we work with smooth initial data. In fact, following \cite{linares2024well}, we use energy estimates to obtain smooth solutions for the system \eqref{Schr-ILW}. Then, to consider initial data in low regularity Sobolev spaces we follow the approach used by Molinet and Pilod \cite{molinet-pilod} since the main difficulty is introduced by the second equation in \eqref{Schr-ILW-T}. We define a suitable gauge transformation, as the one considered by Tao in \cite{tao2004global} to treat the second equation in \eqref{Schr-ILW-T}. Specifically, let $(u,v)$ be a smooth solution of the system \eqref{Schr-ILW-T} with initial data $(u_0,v_0)\in H^\infty (\mathbb{R})\times H^\infty(\mathbb{R)}$. We also work with the parameters $\alpha=1$ and $\beta=\nu$. Consider a function $F:=F(t,x)$ such that $F_x=v$ and  
    \begin{equation}
        \label{F}F_t - \nu\mathscr{H}F_{xx}+\frac{\rho}{2}F_x^2=\nu \mathscr{Q}_\delta F_x + \nu |u|^2. 
    \end{equation}
See Section \ref{Gauge def} for an explicit construction of $F$. Define 
\begin{equation}
    w:= -\frac{\nu}{2}iP_{\text{+hi}}\left(e^{-\frac{\nu}{2}iF}v\right).
\end{equation} Setting $\rho=\nu^2$ it can be shown that $w$ satisfies the following equation: 
\begin{equation}
    \label{g} w_t-\nu\mathscr{H}w_{xx} = \mathscr{N}_\delta(u,v,w).
\end{equation}
where 
\begin{multline}
   \label{Ndelta} \mathscr{N}_\delta(u,v,w) := \nu^2 \partial_xP_{\text{+hi}}\left(\partial_x^{-1}w \, P_-\partial_xv\right)+\nu^2 \partial_xP_{\text{+hi}}\left(P_{\text{lo}}e^{-\frac{\nu}{2}iF} \, P_-\partial_xv\right)\\
    -\frac{\nu^2}{2}i \partial_xP_{\text{+hi}}\left(e^{-\frac{\nu}{2}iF} \, \mathscr{Q}_\delta v\right) -\frac{\nu^2}{2}i \partial_xP_{\text{+hi}}\left(e^{-\frac{\nu}{2}iF} \, |u|^2\right).
\end{multline}   The system \eqref{Schr-ILW-T}, coupled with equation \eqref{g}, forms a $3 \times 3$ system. While for the coupled system \eqref{Shcr-quaseILW-T} it was sufficient to work within $X^{s+\frac{1}{2},b}_\text{S}$ and $X^{s,b}_{\text{BO}}$, the inclusion of the gauge-transformed variable $w$ in \eqref{Schr-ILW-T} needs a more delicate choice of functional spaces. Adopting the framework proposed in \cite{molinet-pilod} and \cite{chapouto}, for the second equation of \eqref{Schr-ILW-T} we employ the intersection space $L^\infty_T H^s_x \cap L^4_T W^{s,4}_x \cap X^{s-\theta,\theta}_{\text{BO}}(T)$, with $0\leq\theta\leq1$, and for equation \eqref{g}, we work with $Y^{s,\frac{1}{2}}(T)$. We also remark that the $Y^{s,\frac{1}{2}}$ -  norm of the first three terms of $\mathscr{N}_\delta(u,v,w)$ were bounded in \cite{molinet-pilod} and \cite{chapouto}.\\ \\  For the first equation of system \eqref{Schr-ILW-T}, following the method employed for \eqref{Shcr-quaseILW-T}, we work with $X^{s+\frac{1}{2},b}_\text{S}(T)$, the time-localized version of $X^{s+\frac{1}{2},b}_\text{S}$. In fact, for the Duhamel formulation of this equation, we apply estimates in Bourgain spaces, obtaining 
\begin{equation} \label{yacasito_final} \|u\|_{X^{s+\frac{1}{2},b}_\text{S}(T)} \lesssim \|u(0)\|_{H^{s+\frac{1}{2}}} + \|uv\|_{X^{s+\frac{1}{2},b-1}_\text{S}} + \||u|^2u\|_{X^{s+\frac{1}{2},b-1}_\text{S}(T)}. \end{equation}
While it is tempting to apply a bilinear estimate of the type
\begin{equation}
    \label{be} \left\|uv\right\|_{X^{s+\frac{1}{2},b}_\text{S}(T)}\lesssim \|u\|_{X^{s+\frac{1}{2},b}_\text{S}(T)}\|v\|_{X^{s,b}_\text{BO}(T)}
\end{equation}
to the second term on the right-hand side of \eqref{yacasito_final}, a difficulty arises: $X^{s,b}_{\text{BO}}$ is not the space where $v$ lies. Recalling that $v$ is instead placed in a subset of $X^{s-\theta,\theta}_{\text{BO}}$, the primary technical challenge of this work is therefore to obtain an estimate of type 
\begin{equation*}
    \left\|uv\right\|_{X^{s+\frac{1}{2},a}_\text{S}} \lesssim \|u\|_{X^{s+\frac{1}{2},b}_\text{S}}\left\|v\right\|_{X^{s-\theta,\theta}_\text{BO}},
\end{equation*} for any $0\leq\theta\leq1$ depending on $s\geq0$, $b>\frac{1}{2}$  and some $a\leq0$. Specifically, in Section \ref{uv}, we prove the required inequality \eqref{estimativa bilineal} for the non-resonant case $|\nu| \neq 1$. We also remark that this method does not work in the resonant case $|\nu| = 1$.  \\ \\ Combining these estimates with a bootstrap argument and the conservation laws \eqref{C2}, \eqref{C3}, and \eqref{C4}, we obtain the desired result. A final important remark regards the smallness assumptions to implement the Molinet-Pilot argument. In Remark \ref{scalingch} we explain how to overcome this difficulty. \\  \\ This paper is organized as follows: in Section 2, we introduce the notations, define the function spaces, and recall some classical linear estimates. In Section 3 we introduce the gauge transformation and we work its a priori estimates. Section 5 is devoted to prove the key nonlinear estimates, which are used in Section 6 to prove the main part of Theorem \ref{Th2}. In the appendix, following the approach in \cite{linares2024well}, we establish local well-posedness for the Cauchy problem associated to \eqref{Schr-ILW} for smooth initial data. \\ \\ \textbf{Acknowledgments} This paper is part of my Ph.D. thesis at IMPA under the guidance of my advisor Felipe Linares. I want to take the opportunity to express my sincere gratitude to him. The author was supported by CNPq-Brazil. \\ 

\section{Notation, function spaces and preliminary estimates}
\subsection{ Notation} 
For any positive numbers $a$ and $b$, the notation $a \lesssim b$ means that there exists a positive constant $c$ such that $a \le cb$. We also write $a \sim b$ when $a \lesssim b$ and $b \lesssim a$. Moreover, if $\alpha \in \mathbb{R}$, $\alpha_+$ and $\alpha_-$ will denote a number slightly greater and lesser than $\alpha$, respectively.\\ \\ For $u = u(t,x) \in \mathscr{S}'(\mathbb{R}^2)$, $\mathscr{F}u = \hat{u}$ will denote its space-time Fourier transform whereas $\mathscr{F}_x u = (u)^{\wedge_x}$ and $\mathscr{F}_t u = (u)^{\wedge_t}$ will denote its Fourier transform in space and time, respectively. For $s \in \mathbb{R}$, we define the Bessel and Riesz potentials of order $s$, $J_x^s$ and $D_x^s$, by
\begin{equation}
\widehat{J_x^s u}^x(t,\xi) = \left(1+\xi^2\right)^\frac{s}{2}\widehat{u}^x(t,\xi) \quad \text{and} \quad \widehat{D_x^s u}^x(t,\xi) = |\xi|^s \widehat{u}^x(t,\xi),
\end{equation} respectively. Throughout the paper, we fix a cutoff function $\eta$ such that $\eta \in C_0^\infty(\mathbb{R})$, $0 \le \eta \le 1$, $\text{supp}(\eta) \subset [-2, 2]$, and $\eta|_{[-1, 1]} = 1$. We define
\begin{equation}
\phi(\xi) := \eta(\xi) - \eta(2\xi) \quad \text{and} \quad \phi_{2^l}(\xi) := \phi(2^{-l}\xi).
\end{equation} Summations over capitalized variables such as $N$ are presumed to be dyadic with $N \ge 1$; i.e., these variables range over numbers of the form $2^n, n \in \mathbb{Z}_+$. Then we have $\sum_N \phi_N(\xi) = 1 - \eta(2\xi)$ for all $\xi \neq 0$ and $\text{supp}(\phi_N) \subset \{ \frac{1}{2}N \le |\xi| \le 2N \}$. \\ Let us define the Littlewood-Paley multipliers by
\begin{equation}
\widehat{P_N u}^x(t,\xi) = \phi_N(\xi)\widehat{u}^x(t,\xi) \quad \text{and} \quad P_{\ge N} := \sum_{K \ge N} P_K. 
\end{equation} We also define the operators $P_{\text{hi}}$, $P_{\text{HI}}$, $P_\text{lo}$, and $P_\text{LO}$ by
\begin{equation}
P_\text{hi} = \sum_{N \ge 2} P_N, \quad P_\text{HI} = \sum_{N \ge 8} P_N, \quad P_\text{lo} = I - P_\text{hi}, \quad \text{and} \quad P_\text{LO} = I - P_\text{HI} \text{.}
\end{equation} Let $P_+$ and $P_-$ denote the projections on the positive and the negative Fourier frequencies, respectively. Then
\begin{equation}
\widehat{P_{\pm} u}^x(t,\xi) := \chi_{\mathbb{R}_{\pm}}(\xi) \widehat{u}^x(t,\xi),
\end{equation} and we also define $P_{\pm \text{hi}}=P_{\pm}P_\text{hi}$, $P_{\pm \text{HI}}=P_{\pm}P_\text{HI}$, $P_{\pm \text{lo}}=P_{\pm}P_\text{lo}$ and $P_{\pm \text{LO}}=P_{\pm}P_\text{LO}.$ Observe that $P_\text{hi}$, $P_\text{HI}$, $P_\text{lo}$, and $P_\text{LO}$ are bounded operators on $L^{p}(\mathbb{R})$ for $1\le p\le\infty$ while $P_{\pm}$ is only bounded on $L^{p}(\mathbb{R})$ for $1<p<\infty$. We also note that
\begin{equation}
\mathscr{H}=-iP_{+}+iP_{-} \text{.} \label{2-1}
\end{equation}
Finally, we denote by $U(\cdot)$ and $V(\cdot)$ the free groups associated with the linearized Schrödinger and Benjamin-Ono equations, respectively, which is to say,
\begin{equation}
    \label{grupo1} \widehat{\text{U}(t)u}^x(t,\xi):= e^{-it\left(\xi^2-\nu\delta^{-1}\xi\right)}\widehat{u}^x(t,\xi),
\end{equation}
and,
\begin{equation}
    \label{grupo2} \widehat{\text{V}(t)u}^x(t,\xi):= e^{-it\left(\nu\xi|\xi|\right)}\widehat{u}^x(t,\xi).
\end{equation}

\subsection{\label{function spaces}Function spaces} 
For $1\le p\le\infty$, $L^{p}(\mathbb{R})$ is the usual Lebesgue space with the norm $\|\cdot\|_{L^{P}}$ and for $s\in\mathbb{R},$ the real-valued Sobolev spaces $H^{s}(\mathbb{R})$ and $W^{s,p}(\mathbb{R})$ denote the spaces of all real-valued functions with the usual norms
\begin{equation}
\|f\|_{H^{s}}=\left\|J^{s}f\right\|_{L^{2}} \quad \text{and} \quad \|f\|_{W^{s,p}}=\|J_{x}^{s}f\|_{L^{p}} \text{.} \label{2-3}
\end{equation}
Let us see how the symbol $\mathscr{Q}_\delta$ acts in Sobolev spaces. \\ 

\begin{lemma}
    Let $s\geq0$. Then, we have
    \begin{equation}
      \label{qdelta}  \left\|\mathscr{Q}_\delta f\right\|_{H^s}\lesssim \delta^{-1}\big(1+\delta^{-s}\big)\|f\|_{L^2},
    \end{equation}
    where the implicit constant is independent of $\delta$.  
\end{lemma}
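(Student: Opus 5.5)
The plan is to reduce the estimate to a pointwise bound on the Fourier symbol of $\mathscr{Q}_\delta$ and then apply Plancherel. By definition $\mathscr{Q}_\delta=(\tau_\delta-\mathscr{H})\partial_x$. The symbols are $-i\coth(\delta\xi)$ for $\tau_\delta$, $-i\sgn(\xi)$ for $\mathscr{H}$, and $i\xi$ for $\partial_x$. Hence $\mathscr{Q}_\delta$ is the Fourier multiplier with symbol
\begin{equation*}
m_\delta(\xi)=\xi\big(\coth(\delta\xi)-\sgn(\xi)\big)=|\xi|\big(\coth(\delta|\xi|)-1\big)=\frac{2|\xi|}{e^{2\delta|\xi|}-1},
\end{equation*}
which is even, nonnegative and real. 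The symbol is smooth away from the origin. At $\xi=0$ it extends continuously, since $\xi\coth(\delta\xi)\to\delta^{-1}$ and $\xi\sgn(\xi)\to 0$. By Plancherel,
\begin{equation*}
\|\mathscr{Q}_\delta f\|_{H^s}\le \sup_{\xi\in\mathbb{R}}(1+\xi^2)^{s/2}|m_\delta(\xi)|\,\|f\|_{L^2},
\end{equation*}
so it suffices to show that this supremum is $\lesssim\delta^{-1}(1+\delta^{-s})$, with a constant depending only on $s$.

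\textbf{Elementary bounds.} Substitute $y=\delta|\xi|\ge0$, so that $m_\delta(\xi)=\delta^{-1}g(y)$ with $g(y)=\frac{2y}{e^{2y}-1}$.
\begin{itemize}
\item From $e^{2y}-1\ge 2y$ we get $0\le g(y)\le 1$, hence $|m_\delta|\le\delta^{-1}$ everywhere.
\item For every $k\ge0$ the function $y^k g(y)$ is continuous on $[0,\infty)$ and decays exponentially, so $C_k:=\sup_{y\ge0}y^kg(y)<\infty$.
\end{itemize}

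\textbf{Splitting the supremum.} In the region $|\xi|\le1$ we have $(1+\xi^2)^{s/2}\le 2^{s/2}$, so the weighted symbol is bounded by $2^{s/2}\delta^{-1}$. In the region $|\xi|\ge1$ we use $(1+\xi^2)^{s/2}\le 2^{s/2}|\xi|^s$. Then
\begin{equation*}
|\xi|^s|m_\delta(\xi)|=\delta^{-s-1}\,y^s g(y)\le C_s\,\delta^{-s-1}.
\end{equation*}
Adding the two regions gives
\begin{equation*}
\sup_{\xi}(1+\xi^2)^{s/2}|m_\delta(\xi)|\lesssim_s\delta^{-1}+\delta^{-1-s}=\delta^{-1}(1+\delta^{-s}),
\end{equation*}
which is \eqref{qdelta}. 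The implicit constant depends only on $s$ and not on $\delta$.

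\textbf{Expected difficulty.} There is no real obstacle here. The only point needing care is the symbol computation, in particular checking the sign conventions so that $\coth(\delta\xi)-\sgn(\xi)$ combines into $\coth(\delta|\xi|)-1$, and verifying the behaviour near $\xi=0$. That behaviour is handled by the inequality $e^{2y}-1\ge 2y$.
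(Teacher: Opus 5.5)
Your proof is correct and follows essentially the same route as the proof the paper relies on; the paper itself only cites Lemma 2.3 of Chapouto--Li--Oh--Pilod. There the symbol of $\mathscr{Q}_\delta$ is identified as $\xi\coth(\delta\xi)-|\xi|=\delta^{-1}g(\delta|\xi|)$ with $g(y)=\frac{2y}{e^{2y}-1}$. One then uses $0\le g\le 1$ and the exponential decay of $g$ to bound $\langle\xi\rangle^s$ times the symbol by a constant depending only on $s$ times $\delta^{-1}(1+\delta^{-s})$, and concludes by Plancherel.
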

\begin{proof} [Proof]
    See the proof of Lemma 2.3 in \cite{chapouto}. 
\end{proof} For $1<p<\infty$, we define the space $\tilde{L}^{p}$ as 
\begin{equation}
\|f\|_{\tilde{L}^{p}}=\|P_{lo}f\|_{L^{p}}+\bigg(\sum_N\|P_{N}f\|_{L^{p}}^{2}\bigg)^{1/2} \text{.}
\end{equation}
Observe that when $p\ge2$, the Littlewood-Paley theorem on the square function and Minkowski's inequality imply that the injection $\tilde{L}^{p}\hookrightarrow L^{p}$ is continuous. Moreover, if $u=u(t,x)$ is a real-valued function defined for $x\in\mathbb{R}$ and in the time interval $[0, T]$ with $T>0,$ $B$ is one of the spaces defined above, and $1\le p\le\infty,$ we will define the mixed space-time spaces $L_{T}^{p}B_{x}$ and $L_{t}^{p}B_{x}$ by the norms 
\begin{equation}
\|u\|_{L_{T}^{p}B_{x}}=\Big(\int_{0}^{T}\|u(\cdot,t)\|_{B}^{p}dt\Big)^{\frac{1}{p}} \quad \text{and} \quad \|u\|_{L_{t}^{p}B_{x}}=\Big(\int_{\mathbb{R}}\|u(\cdot,t)\|_{B}^{p}dt\Big)^{\frac{1}{p}} \text{,}
\end{equation}
respectively.

For $s, b\in\mathbb{R}$ we introduce the Bourgain spaces $X^{s,b}$ and $Z^{s,b}$ related to the Benjamin-Ono equation as the completion of the Schwartz space $\mathscr{S}(\mathbb{R}^{2})$ under the norms 
\begin{equation*}
    \|u\|_{X^{s,b}_\text{S}}:= \big\|\langle\tau+\xi^2-\nu\delta^{-1}\xi\rangle^b\langle\xi\rangle^s\widehat{u}(\tau,\xi)\big\|_{L^2_{\tau,\xi}},
\end{equation*}
\begin{equation*}
    \|u\|_{X^{s,b}_{\text{BO}}}:= \big\|\langle\tau-\nu\xi|\xi|\rangle^b\langle\xi\rangle^s\widehat{u}(\tau,\xi)\big\|_{L^2_{\tau,\xi}},
\end{equation*}
\begin{equation*}
    \|u\|_{Z^{s,b}}:= \big\|\langle\tau-\nu\xi|\xi|\rangle^b\langle\xi\rangle^s\widehat{u}(\tau,\xi)\big\|_{L^2_\xi\,L^1_\tau},
\end{equation*}
\begin{equation*}
    \|u\|_{\tilde{Z}^{s,b}}:= \bigg(\sum_{N\in2^\mathbb{Z}}\|P_Nu\|_{Z^{s,b}}^2\bigg)^{\frac{1}{2}},
\end{equation*}
and,
\begin{equation*}
    \|u\|_{Y^{s,b}}:= \|u\|_{X^{s,b}_{\text{BO}}}+\|u\|_{\tilde{Z}^{s,b-\frac{1}{2}}},
\end{equation*}
where $\langle x\rangle := 1 + |x|$ for all $x\in\mathbb{R}$. \\ \\ We will also use the localized (in time) version of these spaces. Let $T>0$ be a positive time and $\|\cdot\|_{B}$ be one of the norms defined above. If $u:\mathbb{R}\times[0,T]\rightarrow\mathbb{C}$, then 
\begin{equation}
\|u\|_{B_{T}}:=\inf\left\{\|\tilde{u}\|_{B} \mid \tilde{u}:\mathbb{R}\times\mathbb{R}\rightarrow\mathbb{C}, \tilde{u}|_{\mathbb{R}\times[0,T]}=u\right\} \text{.} \label{2-5}
\end{equation}

\subsection{Linear estimates}

In this subsection, we recall some linear estimates in Bourgain's spaces that will be needed later. The first ones are well known (see \cite{ginibre}, for example). \\

\begin{lemma}
Let $s \in \mathbb{R}$. Then
\begin{equation}
\left\|\eta(t)U(t)f\right\|_{Y^{s,\frac{1}{2}}} \lesssim \|f\|_{H^s}. \label{2-5}
\end{equation}
\end{lemma}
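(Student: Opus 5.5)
The plan is to compute the space–time Fourier transform of the free evolution cut off in time explicitly, after which both pieces of the $Y^{s,\frac12}$ norm factor into a norm of $f$ times a constant depending only on $\eta$. There is one preliminary point. The norm $Y^{s,b}$ is built on the Benjamin–Ono weight $\langle\tau-\nu\xi|\xi|\rangle$. So the estimate is meaningful for the group whose phase is $\nu\xi|\xi|$, i.e. the group $V(\cdot)$ of \eqref{grupo2}. I will therefore read $U$ in the statement as this free group and argue with $V$. The same computation, with $\tau-\nu\xi|\xi|$ replaced by $\tau+\xi^2-\nu\delta^{-1}\xi$, gives $\|\eta(t)U(t)f\|_{X^{s,\frac12}_{\text{S}}}\lesssim\|f\|_{H^s}$ for the Schrödinger group.

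\textbf{Step 1 (explicit Fourier transform).} By \eqref{grupo2}, $\mathscr{F}_x\big(\eta(t)V(t)f\big)(t,\xi)=\eta(t)e^{-it\nu\xi|\xi|}\widehat f(\xi)$. Taking the Fourier transform in $t$ gives, with the appropriate sign convention,
\begin{equation*}
\mathscr{F}\big(\eta(t)V(t)f\big)(\tau,\xi)=\widehat{\eta}\big(\tau-\nu\xi|\xi|\big)\,\widehat f(\xi),
\end{equation*}
where $\widehat\eta\in\mathscr{S}(\mathbb{R})$ because $\eta\in C_0^\infty(\mathbb{R})$.

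\textbf{Step 2 (the $X^{s,\frac12}_{\text{BO}}$ part).} Change variables $\sigma=\tau-\nu\xi|\xi|$ for fixed $\xi$, which is a translation in $\tau$. By Fubini,
\begin{equation*}
\|\eta V(\cdot)f\|_{X^{s,\frac12}_{\text{BO}}}=\big\|\langle\sigma\rangle^{\frac12}\widehat\eta(\sigma)\big\|_{L^2_\sigma}\,\big\|\langle\xi\rangle^s\widehat f\big\|_{L^2_\xi}\lesssim\|f\|_{H^s}.
\end{equation*}
The constant is finite since $\widehat\eta$ is Schwartz.

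\textbf{Step 3 (the $\tilde Z^{s,0}$ part).} Here $b-\frac12=0$. For each dyadic $N$, apply the same change of variables inside the $L^1_\tau$ norm:
\begin{equation*}
\|P_N(\eta V(\cdot)f)\|_{Z^{s,0}}=\big\|\langle\xi\rangle^s\phi_N(\xi)\widehat f(\xi)\,\|\widehat\eta(\tau-\nu\xi|\xi|)\|_{L^1_\tau}\big\|_{L^2_\xi}=\|\widehat\eta\|_{L^1}\,\big\|\langle\xi\rangle^s\phi_N\widehat f\big\|_{L^2_\xi}.
\end{equation*}
Squaring and summing over $N$, the supports of the $\phi_N$ overlap boundedly and $0\le\phi_N\le1$, so $\sum_N\phi_N^2\lesssim1$. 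Hence
\begin{equation*}
\|\eta V(\cdot)f\|_{\tilde Z^{s,0}}\lesssim\|\widehat\eta\|_{L^1}\|f\|_{H^s}.
\end{equation*}
Adding Steps 2 and 3 proves the lemma.

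There is no serious obstacle; the argument is a direct computation. The only points needing care are the bookkeeping in Step 3 and matching the group to the correct dispersion weight. In Step 3 one must check that the dyadic decomposition (including the sum over $N\in2^{\mathbb{Z}}$ written in the definition of $\tilde Z$) has finite overlap, so the square sum is controlled by $\|f\|_{H^s}$. The matching of group and weight is essential: otherwise the translation $\sigma=\tau-\nu\xi|\xi|$ in Steps 2–3 fails.
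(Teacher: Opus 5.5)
Your proof is correct. It is the standard direct computation that the paper does not write out but cites as well known: the space--time Fourier transform factors as $\widehat{\eta}(\tau-\nu\xi|\xi|)\widehat{f}(\xi)$, and both the $X^{s,\frac12}_{\text{BO}}$ and the $\tilde{Z}^{s,0}$ pieces then reduce to $\|f\|_{H^s}$ times a constant depending only on $\eta$. You are right to read $U$ as the Benjamin--Ono group, and your sign caveat is warranted: with $\mathscr{H}$ the multiplier $-i\sgn(\xi)$, the group solving $v_t-\nu\mathscr{H}v_{xx}=0$ is $e^{it\nu\xi|\xi|}$, which is the one matching the weight $\langle\tau-\nu\xi|\xi|\rangle$, so the sign in \eqref{grupo2} is a slip.
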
 

\begin{lemma}
Let $s \in \mathbb{R}$ and $-\frac{1}{2}<b'\leq0\leq b\leq b'+1$ and $0<T\leq1$. Then,
\begin{equation}
\Big\|\eta(t) \int_{0}^{t} U(t-t')g(t') dt'\Big\|_{X^{s,b}_\text{BO}} \lesssim T^{1-b+b'}\|g\|_{X^{s,b'}_\text{BO}},\label{2-6}
\end{equation}
\begin{equation}
\Big\|\eta(t) \int_{0}^{t} U(t-t')g(t') dt'\Big\|_{X^{s,b}_\text{S}} \lesssim T^{1-b+b'}\|g\|_{X^{s,b'}_\text{S}}, \label{2-6}
\end{equation}
and
\begin{equation}
\Big\|\eta(t) \int_{0}^{t} U(t-t')g(t') dt'\Big\|_{Y^{s,\frac{1}{2}}} \le \|g\|_{X_\text{BO}^{s,-\frac{1}{2}}} + \|g\|_{\tilde{Z}^{s,-1}}. \label{2-7}
\end{equation}
\end{lemma}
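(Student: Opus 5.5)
These are the standard linear estimates in Bourgain spaces, so I would reduce everything to one-dimensional statements in the time variable by conjugating with the appropriate free group. Note that the group acting here must be the one matching each space: $V(\cdot)$ with symbol $\nu\xi|\xi|$ for $X_{\text{BO}}$ and $Y$, and $U(\cdot)$ for $X_{\text{S}}$. Writing $\tilde g(t):=U(-t)g(t)$ (resp. $V(-t)g(t)$), the space-time Fourier transform of $\tilde g$ is $\widehat g(\tau-\phi(\xi),\xi)$, where $\phi$ is the relevant dispersion symbol. Hence
\[
\|g\|_{X^{s,b}}=\big\|\langle\xi\rangle^s\|\widehat{\tilde g}(\cdot,\xi)\|_{H^b_t}\big\|_{L^2_\xi},
\]
and similarly the $Z^{s,b}$ norm becomes a weighted $L^1_\tau$ norm of $\widehat{\tilde g}(\cdot,\xi)$. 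For the free evolution, $\mathscr F\big(\eta(t)V(t)f\big)(\tau,\xi)=\widehat\eta(\tau-\phi(\xi))\widehat f(\xi)$. Since $\eta\in\mathscr S$, we have $\|\langle\tau\rangle^{1/2}\widehat\eta\|_{L^2_\tau}<\infty$ and $\|\widehat\eta\|_{L^1_\tau}<\infty$. The $\tilde Z^{s,0}$ part is then handled by summing over dyadic blocks $P_N$ and using almost orthogonality. This gives the first lemma at once.

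For the Duhamel estimates, everything reduces, for each fixed $\xi$, to the scalar inequality
\[
\Big\|\eta(t)\!\int_0^t h(t')\,dt'\Big\|_{H^b_t}\lesssim T^{1-b+b'}\|h\|_{H^{b'}_t}.
\]
More precisely, the estimate should be proved for $\eta_T(t)=\eta(t/T)$, since the factor $T^{1-b+b'}$ must come from localizing to time scale $T$; I would note that the statement implicitly uses $\eta_T$. I would follow Ginibre's argument. Write
\[
\int_0^t h=\int \widehat h(\tau)\,\frac{e^{it\tau}-1}{i\tau}\,d\tau ,
\]
and split at $|\tau|T\le1$ and $|\tau|T>1$.

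In the low region, expand $(e^{it\tau}-1)/(i\tau)=\sum_{k\ge1}(it)^k\tau^{k-1}/k!$. Each term $t^k\eta_T(t)$ has $H^b$ norm $\lesssim T^{k}T^{1/2-b}$. The $\tau$-integral $\int_{|\tau|\le1/T}|\tau|^{k-1}|\widehat h|$ is controlled by Cauchy–Schwarz with weight $\langle\tau\rangle^{b'}$, costing $T^{-(k-1)}T^{-1/2-b'}$ (this uses $b'>-\tfrac12$). The powers of $T$ combine to $T^{1-b+b'}$, and the series in $k$ sums.

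In the high region, split the integrand into $e^{it\tau}/(i\tau)$ and $-1/(i\tau)$. The first piece is $\eta_T$ times a function with Fourier transform $\widehat h(\tau)/\tau$ supported in $|\tau|>1/T$. Multiplication by $\eta_T$ is bounded on $H^b$, and $\langle\tau\rangle^{b}|\tau|^{-1}\le T^{1-b+b'}\langle\tau\rangle^{b'}$ there, because $b-1-b'\le0$. The constant piece gives $\|\eta_T\|_{H^b}\int_{|\tau|>1/T}|\widehat h|/|\tau|$, which is bounded by Cauchy–Schwarz using $b'<\tfrac12$ (true since $b'\le0$).

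The main obstacle I expect is the $Y^{s,\frac12}$ Duhamel estimate, which sits at the endpoint $b=\tfrac12$, $b'=-\tfrac12$ where the above Cauchy–Schwarz fails. This is exactly why the $\tilde Z$ norms appear. I would redo the splitting with $T=1$ but now control the $\tau$-integrals of $|\widehat h|/\langle\tau\rangle$ directly by the $L^1_\tau$ norm $\|\widehat h\langle\tau\rangle^{-1}\|_{L^1_\tau}$, which is the $Z^{s,-1}$ quantity. The $H^{1/2}$ part of the first high-frequency piece uses $\|g\|_{X^{s,-1/2}}$. For the $Z^{s,0}$ part of $Y^{s,1/2}$, I would use $\|\widehat{\eta\psi}\|_{L^1}\le\|\widehat\eta\|_{L^1}\|\widehat\psi\|_{L^1}$, together with $\|\widehat h/\tau\|_{L^1(|\tau|>1)}\lesssim\|g\|_{Z^{s,-1}}$. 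Finally I would square-sum over the dyadic blocks $P_N$, which commute with everything, to obtain the $\tilde Z$ norms.
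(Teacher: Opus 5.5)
Your route is the standard Ginibre argument: conjugate by the free group, reduce to a scalar estimate in $t$ for each fixed $\xi$, and split at $|\tau|\sim 1/T$. This is the argument the paper relies on, since it gives no proof and only refers to Ginibre; the $Y^{s,\frac12}$ bound is the Molinet--Pilod variant. Your corrections to the statement are right: the factor $T^{1-b+b'}$ requires the rescaled cutoff $\eta_T(t)=\eta(t/T)$, and the group must be $V$ for the $X_{\text{BO}}$ and $Y$ estimates. The $Y^{s,\frac12}$ part of your plan is fine: taking $T=1$, controlling $\int|\widehat h|\langle\tau\rangle^{-1}d\tau$ by the $Z^{s,-1}$ norm, using Young's inequality for $\widehat{\eta f}$, and square-summing over $P_N$.

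There is, however, a real error in the $T$-bookkeeping of your two Cauchy--Schwarz steps: the sign of $b'$ is reversed, so as written they do not give the power you claim. In the low region one has
\[
\int_{|\tau|\le 1/T}|\widehat h|\,d\tau\le\Big(\int_{|\tau|\le 1/T}\langle\tau\rangle^{-2b'}\,d\tau\Big)^{1/2}\|h\|_{H^{b'}}\lesssim T^{-\frac12+b'}\|h\|_{H^{b'}},
\]
not $T^{-\frac12-b'}$, and no condition beyond $b'\le 0$ is used here. With your exponent the product $T^{k+\frac12-b}\,T^{-(k-1)}\,T^{-\frac12-b'}$ equals $T^{1-b-b'}$, not $T^{1-b+b'}$. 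That bound would be false for $b'<0$: a rescaled bump $h(t)=\chi(t/T)$ gives a left side $\sim T^{\frac32-b}$ (already for the low-frequency part) while $\|h\|_{H^{b'}}\lesssim T^{\frac12-b'}$, so $T^{1-b+b'}$ is sharp.

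The hypothesis $b'>-\frac12$ is needed instead in the constant high-frequency piece:
\[
\int_{|\tau|>1/T}\frac{|\widehat h|}{|\tau|}\,d\tau\le\Big(\int_{|\tau|>1/T}|\tau|^{-2}\langle\tau\rangle^{-2b'}\,d\tau\Big)^{1/2}\|h\|_{H^{b'}}\lesssim T^{\frac12+b'}\|h\|_{H^{b'}}.
\]
The integral converges at infinity exactly when $b'>-\frac12$; the condition $b'<\frac12$ you invoke is irrelevant there. Multiplied by $\|\eta_T\|_{H^b}\lesssim T^{\frac12-b}$, this gives $T^{1-b+b'}$.

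A smaller point concerns the oscillatory high-frequency piece. Multiplication by $\eta_T$ is not uniformly bounded on $H^b$ when $b>\frac12$ (take $f$ smooth, compactly supported, equal to $1$ on $[-2,2]$). Use instead $\|\eta_T f\|_{H^b}\lesssim\|f\|_{H^b}+T^{-b}\|f\|_{L^2}$; the Fourier support $|\tau|>1/T$ of $f$ then gives $T^{-b}\|f\|_{L^2}\le\|f\|_{H^b}$. With these corrections your argument proves the lemma.
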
 

\begin{lemma}
    For any $T>0$, $s\in\mathbb{R}$ and for all $-\frac{1}{2}<b'\leq b< \frac{1}{2}$,
    \begin{equation}
        \|u\|_{X^{s,b'}_\text{BO}(T)}\lesssim T^{b-b'}\|u\|_{X^{0,\frac{3}{8}}_\text{BO}(T)}. \label{2-8}
    \end{equation}
\end{lemma}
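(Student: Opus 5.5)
Although the display writes $X^{0,\frac{3}{8}}_\text{BO}(T)$ on the right-hand side, I read the intended statement as the standard one:
\begin{equation*}
\|u\|_{X^{s,b'}_\text{BO}(T)}\lesssim T^{b-b'}\|u\|_{X^{s,b}_\text{BO}(T)},\qquad -\tfrac12<b'\le b<\tfrac12 .
\end{equation*}
The case $(s,b)=(0,\frac38)$ is a particular instance. The plan is to reduce this to a one-dimensional estimate in time for the sharp cutoff $\chi_{[0,T]}$, applied to each spatial frequency separately.

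\textbf{Reduction.} Let $\tilde u$ be any extension of $u$ with $\|\tilde u\|_{X^{s,b}_\text{BO}}\le 2\|u\|_{X^{s,b}_\text{BO}(T)}$. Then $\chi_{[0,T]}(t)\tilde u$ is also an extension of $u$. Conjugating by the free group, we have
\begin{equation*}
\|v\|_{X^{s,b}_\text{BO}}=\big\|\langle\xi\rangle^s\|\mathscr{F}_x(V(-t)v)(\cdot,\xi)\|_{H^b_t}\big\|_{L^2_\xi},
\end{equation*}
and multiplication by $\chi_{[0,T]}(t)$ commutes with $V(-t)$ and with $\mathscr{F}_x$. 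It therefore suffices to prove, for functions $f$ of one variable,
\begin{equation*}
\|\chi_{[0,T]}f\|_{H^{b'}(\mathbb{R})}\lesssim T^{b-b'}\|f\|_{H^b(\mathbb{R})},\qquad -\tfrac12<b'\le b<\tfrac12,
\end{equation*}
with a constant independent of $T$. Applying this for each fixed $\xi$, multiplying by $\langle\xi\rangle^{s}$ and integrating in $\xi$ then gives the claim.

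\textbf{One-dimensional estimate.} I split into three cases.

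\emph{Case $b'\le0\le b$.} I use the dual Sobolev embedding $L^q\hookrightarrow H^{b'}$ with $\frac1q=\frac12-b'$, so $q\in(1,2]$ because $b'>-\frac12$. Then Hölder on $[0,T]$ with $\frac1r=\frac12-b$, and finally the Sobolev embedding $H^b\hookrightarrow L^r$, which holds since $b<\frac12$. This gives
\begin{equation*}
\|\chi_{[0,T]}f\|_{H^{b'}}\lesssim\|\chi_{[0,T]}f\|_{L^q}\le T^{\frac1q-\frac1r}\|f\|_{L^r}\lesssim T^{b-b'}\|f\|_{H^b}.
\end{equation*}

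\emph{Case $0\le b'\le b$.} At the endpoint $b'=b$ I use that multiplication by a characteristic function of an interval is bounded on $H^{b}$ for $|b|<\frac12$, uniformly in the interval; this is a classical fact and I expect it to be the main obstacle. The other endpoint $b'=0$ is given by the previous case, with the factor $T^{b}$. Complex interpolation in $b'$ between these two endpoints then yields the factor $T^{b-b'}$.

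\emph{Case $b'\le b\le0$.} This follows by duality from the previous case, since multiplication by $\chi_{[0,T]}$ is self-adjoint and the pairing of $H^{b'}$ with $H^{-b'}$ and of $H^{b}$ with $H^{-b}$ swaps the roles of the exponents (with $0\le -b\le -b'<\frac12$).

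\textbf{Main obstacle.} The only nontrivial input is the uniform boundedness of multiplication by $\chi_{[0,T]}$ on $H^b$ for $0\le b<\frac12$. If it is not quoted, I would prove it via the Hardy-type inequality $\||t|^{-b}f\|_{L^2}\lesssim\|f\|_{\dot H^b}$ combined with the Gagliardo (difference-quotient) characterization of the $\dot H^b$ norm. Scale invariance of the homogeneous norm shows the bound does not depend on $T$, and the low-frequency part is handled by the trivial $L^2$ bound.
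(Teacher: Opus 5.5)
Your reading of the statement is correct. As printed, with $X^{0,\frac{3}{8}}_{\text{BO}}(T)$ on the right and $s$ arbitrary, the inequality fails for $s>0$. To see this, take $b'=b=0$ and the free wave $u=V(t)\phi_N$ with $\widehat{\phi_N}$ supported at $|\xi|\sim N$. The left side is at least $T^{1/2}\|J^s_x\phi_N\|_{L^2}\sim T^{1/2}N^s\|\phi_N\|_{L^2}$, while the right side is $\lesssim\|\phi_N\|_{L^2}$. The intended estimate is $\|u\|_{X^{s,b'}_{\text{BO}}(T)}\lesssim T^{b-b'}\|u\|_{X^{s,b}_{\text{BO}}(T)}$, as in \cite{molinet-pilod}.

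The paper gives no proof of its own; it only quotes the estimate as well known (pointing to \cite{ginibre}). Your argument is correct and is essentially the standard proof behind that citation:
\begin{itemize}
\item conjugate by the free group to reduce to $\|\chi_{[0,T]}f\|_{H^{b'}}\lesssim T^{b-b'}\|f\|_{H^b}$ uniformly in $T$;
\item obtain the case $b'\le 0\le b$ from dual Sobolev, H\"older on $[0,T]$, and Sobolev;
\item interpolate this against the uniform $H^b$-boundedness of $\chi_{[0,T]}$ for $0\le b<\frac{1}{2}$;
\item dualize to get $b'\le b\le 0$.
\end{itemize}

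Two small remarks. First, the interpolation step can be done by hand using $\|g\|_{H^{b'}}\le\|g\|_{L^2}^{1-b'/b}\|g\|_{H^b}^{b'/b}$, which follows from H\"older in $\tau$. Second, your Hardy plus Gagliardo sketch of the key input already gives uniformity in $T$ without any scaling argument. The only new term is the cross term, which is bounded by $\int_0^T|f(t)|^2\big(|t|^{-2b}+|T-t|^{-2b}\big)\,dt\lesssim\|f\|_{\dot H^b}^2$ by the translation-invariant Hardy inequality.
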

The following Bourgain-Strichartz estimates will also be useful: \\

\begin{lemma}
    We have
    \begin{equation}
        \|u\|_{L^4_{x,t}}\lesssim T^{b-b'}\|u\|_{\tilde{L}^4_{x,t}}\lesssim \|u\|_{X_\text{BO}^{0,\frac{3}{8}}}, \label{2-9}
    \end{equation}
    and for any $T>0$ and $\frac{3}{8}\leq b \leq \frac{1}{2}$,
    \begin{equation}
       \label{BouSt} \|u\|_{L^4_{x,T}}\lesssim T^{b-\frac{3}{8}}\|u\|_{X^{0,\frac{3}{8}}_\text{BO}(T)}
    \end{equation}
\end{lemma}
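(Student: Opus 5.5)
The plan is to prove the global estimate $\|u\|_{L^4_{x,t}}\lesssim\|u\|_{\tilde L^4_{x,t}}\lesssim\|u\|_{X^{0,\frac38}_{\text{BO}}}$ first, and then deduce the time-localized bound from it. I read the localized bound as $\|u\|_{L^4_{x,T}}\lesssim T^{b-\frac38}\|u\|_{X^{0,b}_{\text{BO}}(T)}$, since with $X^{0,\frac38}_{\text{BO}}(T)$ on the right the factor $T^{b-\frac38}$ would not be meaningful; likewise I drop the stray factor $T^{b-b'}$ in the first chain. The first inequality $\|u\|_{L^4}\lesssim\|u\|_{\tilde L^4}$ is the remark made after the definition of $\tilde L^p$. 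By the Littlewood--Paley square function theorem and Minkowski's inequality in $\ell^2$ (valid since $4\ge 2$),
\begin{equation*}
\|u\|_{L^4}\lesssim\|P_{\text{lo}}u\|_{L^4}+\Big\|\Big(\sum_N|P_Nu|^2\Big)^{1/2}\Big\|_{L^4}\le\|P_{\text{lo}}u\|_{L^4}+\Big(\sum_N\|P_Nu\|_{L^4}^2\Big)^{1/2}.
\end{equation*}

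For the second inequality, the $X^{0,b}_{\text{BO}}$ norm is $\ell^2$-orthogonal over the dyadic pieces, because the pieces $\phi_N$ have bounded overlap. So it suffices to prove the single-block estimate $\|P_Nu\|_{L^4_{x,t}}\lesssim\|P_Nu\|_{X^{0,\frac38}_{\text{BO}}}$ uniformly in $N$, together with the same estimate for $P_{\text{lo}}u$. Squaring and summing then gives the claim.

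I would prove the block estimate by writing $\|P_Nu\|_{L^4}^2=\|(P_Nu)^2\|_{L^2}$ and decomposing $P_Nu=\sum_{L}u_L$ dyadically in the modulation variable $\langle\tau-\nu\xi|\xi|\rangle\sim L$. The key intermediate bound is the bilinear estimate
\begin{equation*}
\|u_{L_1}u_{L_2}\|_{L^2_{x,t}}\lesssim (L_1L_2)^{\frac38}\|u_{L_1}\|_{L^2}\|u_{L_2}\|_{L^2}.
\end{equation*}
By Cauchy--Schwarz in the convolution variables, this reduces to bounding, for fixed output $(\tau,\xi)$, the measure of the set of $\xi_1$ satisfying
\begin{equation*}
\big|\tau-\nu\xi_1|\xi_1|-\nu(\xi-\xi_1)|\xi-\xi_1|\big|\lesssim \max(L_1,L_2),
\end{equation*}
with an additional factor $\min(L_1,L_2)$ coming from the $\tau_1$-integration. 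The phase is quadratic in $\xi_1$ on each sign region, so this is the same sublevel-set computation as for Schrödinger. It gives a bound of order $\min(L_1,L_2)\max(L_1,L_2)^{1/2}$ for the square of the operator norm, and this is at most $(L_1L_2)^{3/4}$, as required. Summing over $L_1,L_2$ with weights $L_i^{3/8}$ absorbed into $\|u\|_{X^{0,\frac38}}$ then needs some care. A quick non-endpoint version follows by interpolating $X^{0,0}=L^2$ with the Strichartz embedding $X^{0,\frac12+}_{\text{BO}}\hookrightarrow L^6_{x,t}$, using $\frac14=\frac{3/4}{6}+\frac{1/4}{2}$, which gives $X^{0,\frac38+}_{\text{BO}}\hookrightarrow L^4$. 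For the exact exponent $\frac38$ I would rely on the bilinear form above, summing first over the larger modulation, since the $\ell^2$ sum is exactly critical there.

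For the localized estimate, take any extension $\tilde u$ of $u$ and apply the global estimate to $\eta(t/T)\tilde u$, which agrees with $u$ on $[0,T]$ when $T\lesssim 1$. It then remains to prove $\|\eta(t/T)\tilde u\|_{X^{0,\frac38}_{\text{BO}}}\lesssim T^{b-\frac38}\|\tilde u\|_{X^{0,b}_{\text{BO}}}$ for $\frac38\le b\le\frac12$; this is the standard time-localization property, the same one behind \eqref{2-8}. Taking the infimum over extensions finishes the argument. The main obstacle is the endpoint $b=\frac38$ in the dyadic summation; if it resists, the $\frac38+$ interpolation version suffices for every later use, since $b>\frac38$ is available there.
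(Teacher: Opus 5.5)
Your overall plan is the standard one, and you read the typos correctly: a square function for the first inequality, a uniform blockwise bound $\|P_Nu\|_{L^4}\lesssim\|P_Nu\|_{X^{0,\frac38}_{\text{BO}}}$, then time localization. The paper itself gives no argument and simply defers to Lemma 2.4 of \cite{molinet-pilod}. The core $L^4$ step, however, fails as written.

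Your claim that the phase is quadratic in $\xi_1$ on each sign region is false on the mixed-sign regions. If $\xi_1>0>\xi-\xi_1$, then
\[
\nu\xi_1|\xi_1|+\nu(\xi-\xi_1)|\xi-\xi_1|=\nu\big(\xi_1^2-(\xi-\xi_1)^2\big)=\nu\xi(2\xi_1-\xi).
\]
This is affine in $\xi_1$ with slope $2\nu\xi$, and it vanishes identically when $\xi=0$. Since $P_Nu$ carries both $\xi\sim N$ and $\xi\sim-N$, at $(\tau,\xi)=(0,0)$ the entire mixed part of the block lies in your sublevel set. That set therefore has $\xi_1$-measure of order $N$, not $\max(L_1,L_2)^{1/2}$, and nothing uniform in $N$ comes out. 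This is the Benjamin--Ono high--high$\to$low interaction: $\xi|\xi|$ is not convex. The fix is to split before squaring:
\[
\|P_Nu\|_{L^4}\le\|P_+P_Nu\|_{L^4}+\|P_-P_Nu\|_{L^4},\qquad \|P_\pm P_Nu\|_{X^{0,\frac38}_{\text{BO}}}\le\|P_Nu\|_{X^{0,\frac38}_{\text{BO}}}.
\]
On each half-line $V(t)$ is a Schr\"odinger group and the phase is $\pm\nu\big(\xi_1^2+(\xi-\xi_1)^2\big)$. The sublevel set then has measure $\lesssim(\max(L_1,L_2)/|\nu|)^{1/2}$, using $\nu\neq0$.

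The endpoint $b=\frac38$ is not critical; you made it so by weakening $\min(L_1,L_2)\max(L_1,L_2)^{1/2}$ to $(L_1L_2)^{3/4}$. With the symmetric constant $(L_1L_2)^{3/8}$ the double sum is $\big(\sum_L L^{3/8}\|u_L\|_{L^2}\big)^2$. That is an $\ell^1$ quantity, not controlled by $\|u\|^2_{X^{0,\frac38}_{\text{BO}}}\sim\sum_L L^{3/4}\|u_L\|_{L^2}^2$, and summing the larger modulation first does not change this. Instead keep the sharp constant $L_{\min}^{1/2}L_{\max}^{1/4}=(L_1L_2)^{3/8}(L_{\min}/L_{\max})^{1/8}$. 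With $a_L:=L^{3/8}\|u_L\|_{L^2}$,
\[
\|u\|_{L^4}^2\le\sum_{L_1,L_2}\|u_{L_1}u_{L_2}\|_{L^2}\lesssim\sum_{L_1\le L_2}\Big(\frac{L_1}{L_2}\Big)^{1/8}a_{L_1}a_{L_2}\lesssim\sum_L a_L^2,
\]
by Cauchy--Schwarz along each diagonal $L_2=2^kL_1$. This gives exactly $X^{0,\frac38}_{\text{BO}}\hookrightarrow L^4$. Your $\frac38+$ fallback would prove only a weaker lemma; for instance, it misses the case $b=\frac38$ of \eqref{BouSt}.

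Finally, the time-localization bound you invoke holds only for $b<\frac12$, as in \eqref{2-8}. At $b=\frac12$, \eqref{BouSt} itself fails by a $\log\log(1/T)$ factor. To see this, take $u=f(t)V(t)\phi_\lambda$ with $\phi_\lambda=\lambda^{1/2}\phi(\lambda\,\cdot)$, $\lambda=T^{-1/2}$, and $f\in H^{1/2}$ behaving like $\log\log(1/|t|)$ near $t=0$. So that endpoint cannot be reached by this route or any other.
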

\begin{proof} [Proof]
    See the proof of Lemma 2.4 in \cite{molinet-pilod}.
\end{proof}



\subsection{Fractional Leibniz rules}
\begin{lemma}
    \label{LiLeibniz} Let $1<p<\infty$ and $1<p_1,\, p_2,\, p_3,\, p_4 \leq \infty$ such that $\frac{1}{p_1}+\frac{1}{p_2}=\frac{1}{p_3}+\frac{1}{p_4}=\frac{1}{p}$. Then, for all $f,g\in\mathscr{S}(\mathbb{R})$, 
    \begin{equation*}
        \left\|\left[D^s_x;f\right]g\right\|_{L^p}\lesssim \left\|D^{s-1}_x\partial_x f\right\|_{L^{p_1}}\|g\|_{L^{p_2}} \, \, \, \, \text{if} \, \, \, \, 0<s\leq 1,
    \end{equation*}
    and 
    \begin{equation*}
        \left\|\left[D^s_x;f\right]g\right\|_{L^p}\lesssim \left\|D^{s}_xf\right\|_{L^{p_1}}\|g\|_{L^{p_2}}+ \left\|\partial_xf\right\|_{L^{p_3}}\|D^{s-1}_xg\|_{L^{p_2}}\, \, \, \, \text{if} \, \, \, \, s>1.
    \end{equation*}
\end{lemma}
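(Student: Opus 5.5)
The plan is a direct Littlewood--Paley/paraproduct argument combined with the Coifman--Meyer bilinear multiplier theorem, in the spirit of Kato--Ponce and of D.~Li's sharp commutator estimates. Write
\begin{equation*}
\left[D^s_x;f\right]g = D^s_x(fg)-fD^s_xg
\end{equation*}
and decompose $fg=\sum_{N}\big(P_{\ll N}f\,P_Ng+P_Nf\,P_{\ll N}g+P_Nf\,P_{\sim N}g\big)$ into low--high, high--low and high--high pieces. The commutator then splits the same way. In frequency variables ($\xi$ for $f$, $\eta$ for $g$), each piece is a bilinear Fourier multiplier with symbol $(|\xi+\eta|^s-|\eta|^s)$ restricted to the corresponding frequency region. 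The goal is to factor this symbol as (a derivative symbol on $f$) $\times$ (a derivative symbol on $g$) $\times$ $m(\xi,\eta)$, where $m$ is of Coifman--Meyer type, i.e.\ homogeneous of order $0$ and smooth away from the origin in each dyadic block. Each piece is then bounded in $L^p$ by the product of the $L^{p_1}$ and $L^{p_2}$ (or $L^{p_3}$ and $L^{p_4}$) norms, and the dyadic sums are controlled through square functions.

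\textbf{High--low and high--high pieces.} Here $|\xi|\gtrsim|\eta|$, so $|\xi+\eta|^s\lesssim|\xi|^s$ and $|\eta|^s\lesssim|\xi|^s$. I factor $|\xi|^s=|\xi|^{s-1}\cdot(i\xi)\cdot(\text{order-zero factor})$, which places all derivatives on $f$. This gives the bound $\|D^{s-1}_x\partial_xf\|_{L^{p_1}}\|g\|_{L^{p_2}}$. Since $\|D_x^{s-1}\partial_x f\|=\|\mathscr{H}D^s_x f\|$, the same bound also yields the term $\|D^s_xf\|_{L^{p_1}}\|g\|_{L^{p_2}}$ needed when $s>1$. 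In the high--high piece the output frequency may be much smaller than $N$. This is harmless because $|\xi+\eta|^s/|\xi|^s\lesssim1$; the sum over $N$ is handled by Cauchy--Schwarz together with the Littlewood--Paley square function estimate.

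\textbf{Low--high piece.} This is the genuinely commutator piece, with $|\xi|\ll|\eta|$. I Taylor expand
\begin{equation*}
|\xi+\eta|^s-|\eta|^s=s\,\xi\,\eta|\eta|^{s-2}+O\big(|\xi|^2|\eta|^{s-2}\big),
\end{equation*}
keeping the remainder as a smooth symbol in the region rather than an error term.
\begin{itemize}
\item For $0<s\le1$, I write $\xi|\eta|^{s-1}=\xi|\xi|^{s-1}\cdot(|\xi|/|\eta|)^{1-s}$. The last factor is bounded by $1$ on the cone and is smooth there. Hence the whole symbol is $(i\xi|\xi|^{s-1})\cdot m$, which gives $\|D^{s-1}_x\partial_xf\|_{L^{p_1}}\|g\|_{L^{p_2}}$ with no derivative on $g$.
\item For $s>1$, the leading term is $(i\xi)\cdot(|\eta|^{s-1}\sgn\eta)$ times a smooth order-zero symbol, giving $\|\partial_xf\|_{L^{p_3}}\|D^{s-1}_xg\|_{L^{p_4}}$. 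For the remainder there are two cases. If $s\ge2$, the remainder can be absorbed the same way, since $|\xi|^2|\eta|^{s-2}\le|\xi||\eta|^{s-1}$. If $1<s<2$, I write $|\xi|^2|\eta|^{s-2}=|\xi|^s(|\xi|/|\eta|)^{2-s}$, which yields $\|D^s_xf\|_{L^{p_1}}\|g\|_{L^{p_2}}$.
\end{itemize}

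\textbf{Main obstacle.} The delicate point is the endpoint case $p_1=\infty$ or $p_2=\infty$, together with the fact that the factors $(|\xi|/|\eta|)^{\gamma}$ are not smooth at $\xi=0$. To apply Coifman--Meyer, I would expand these factors dyadically as $\sum_{M\ll N}(M/N)^{\gamma}\,\tilde m_{M,N}$, with each $\tilde m_{M,N}$ uniformly of Coifman--Meyer type. The geometric decay in $M/N$ then makes the sum converge. For the $L^\infty$ endpoints I would place the $L^\infty$ factor on the low-frequency function and use Coifman--Meyer's $L^\infty\times L^{p}\to L^p$ bound ($1<p<\infty$) together with the Littlewood--Paley characterization of $L^p$ for the output. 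If this bookkeeping becomes heavy, I would fall back on citing D.~Li's Kato--Ponce commutator theorem, of which the lemma is a special case.
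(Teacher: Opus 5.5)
\textbf{Comparison with the paper.} Your route differs from the paper's. The paper gives no argument and simply invokes Theorem 5.1 and Corollary 5.3 of \cite{Li}, which is exactly your fallback. You also correctly read the last norm as $\|D^{s-1}_xg\|_{L^{p_4}}$; the $L^{p_2}$ in the statement is a typo, as scaling shows.

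\textbf{The gap in the summation step.} The paraproduct outline is the right one, but the step you defer as ``bookkeeping'' does not work as stated. The box-localized symbols are \emph{not} uniformly of Coifman--Meyer type. On $\{|\xi|\sim M,\ |\eta|\sim N\}$ with $M\ll N$, differentiating the cutoff or the factor $|\xi|^{1-s}$ gives $|\partial_\xi \tilde m_{M,N}|\sim M^{-1}$. Coifman--Meyer requires $\lesssim(|\xi|+|\eta|)^{-1}\sim N^{-1}$. So the fixed-ratio symbol $m_k=\sum_N \tilde m_{2^{-k}N,N}$ has Coifman--Meyer constants $\gtrsim 2^{k}$, which the gain $2^{-k(1-s)}$ cannot absorb. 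Summing individually bounded box operators over both $M$ and $N$ does not help either, since it diverges in $N$.

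The uniform-in-$k$ bound for $\sum_N T_{m_k}(P_{2^{-k}N}F,P_NG)$ has to come from orthogonality instead:
\begin{enumerate}
\item Fourier-expand the rescaled, now uniformly smooth, symbol on each box.
\item Use the reverse Littlewood--Paley inequality on the output, whose frequency is $\sim N$.
\item Bound $\big(\sum_N|P_{2^{-k}N}F|^2|P_NG|^2\big)^{1/2}$ by the supremum of the $L^\infty$ input times the square function of the other input.
\end{enumerate}
The last step is legitimate for either input, because for fixed $k$ both families are lacunary in $N$.

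This is also what settles $p_2=\infty$ in the low--high piece. There the statement forces the $L^\infty$ norm onto the \emph{high}-frequency factor $g$, so ``placing the $L^\infty$ factor on the low-frequency function'' is not an option. The same repair is needed in the high--low piece. There $|\eta|^s/|\xi|^s$ is in general not smooth at $\eta=0$ (with gain $2^{-ks}$), although you treated it as an order-zero factor.

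\textbf{The high--high piece.} The size bound $|\xi+\eta|^s\lesssim|\xi|^s$ is not a multiplier estimate, since $|\xi+\eta|^s$ is in general singular at $\xi+\eta=0$. Moreover, ``Cauchy--Schwarz plus square functions'' needs $p_1,p_2<\infty$, because $\big(\sum_N|P_Ng|^2\big)^{1/2}$ is not bounded by $\|g\|_{L^\infty}$. Instead:
\begin{enumerate}
\item Localize the output to $|\xi+\eta|\sim K\lesssim N$ and gain $(K/N)^s$ for each fixed ratio.
\item Apply the Fefferman--Stein inequality to $\big(\sum_K|P_K(\cdot)|^2\big)^{1/2}$.
\item Put the $L^\infty$ function into a supremum.
\end{enumerate}

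\textbf{Smaller points.} $\|\mathscr{H}D^s_xf\|_{L^\infty}$ is not controlled by $\|D^s_xf\|_{L^\infty}$. So you should factor blockwise with $|\xi|^s$ rather than convert norms via the Hilbert transform. Also, for $s>1$ no remainder case split is needed. On $|\xi|\ll|\eta|$ the whole symbol equals $(i\xi)\,|\eta|^{s-1}\cdot\sgn(\eta)\frac{(1+t)^s-1}{it}$ with $t=\xi/\eta$, and the last factor is a smooth Coifman--Meyer symbol on the cone.

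With these repairs the direct proof goes through. The endpoint exponents you would have to handle this way are precisely the nontrivial content of the result the paper cites.
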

\begin{proof} [Proof]
    See Theorem 5.1 and Corollary 5.3 in \cite{Li}.
\end{proof}
\begin{lemma} Let $\alpha\geq0$ and $1<q<\infty$. Then 
    \begin{equation}
        \label{FLR'} \left\|D^\alpha_xP_+\left(fP_-\partial_xg\right)\right\|_{L^q}\lesssim \left\|D^{\alpha_1}_xf\right\|_{L^{q_1}}\left\|D^{\alpha_2}_xg\right\|_{L^{q_2}}
    \end{equation}
    with $1<q_1,q_2<\infty$, $\frac{1}{q}=\frac{1}{q_1}+\frac{1}{q_2}$, and  $\alpha_1\geq \alpha,\alpha_2\geq0$, and $\alpha_1+\alpha_2=1+\alpha$. 
\end{lemma}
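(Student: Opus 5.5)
We will prove the estimate by a Littlewood--Paley decomposition of $f$ and $g$, exploiting the fact that the output of $P_+(f\,P_-\partial_x g)$ is supported on positive frequencies while $P_-\partial_x g$ lives on negative ones. This forces $f$ to carry the highest frequency.

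\textbf{Step 1: frequency bookkeeping.} Write $f=\sum_{N}P_Nf+P_{\text{lo}}f$ and $g=\sum_M P_Mg+P_{\text{lo}}g$. Suppose $\xi=\xi_1+\xi_2>0$ with $\xi_2<0$ the frequency of $P_-\partial_x g$. Then $\xi_1=\xi-\xi_2>|\xi_2|$ and $\xi_1>\xi$. Hence the frequency of $f$ dominates both the output frequency and the frequency of $g$. Consequently
\[
P_+(f\,P_-\partial_xg)=\sum_{N}P_+\big(P_{\sim\ge}\text{-paraproduct}\big),
\]
where only interactions with $|\xi_1|\gtrsim |\xi|$ and $|\xi_1|\gtrsim|\xi_2|$ survive. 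In other words, only the ``high$(f)$--low$(g)$'' and ``high--high'' paraproducts contribute, and the output never lies at a frequency much larger than that of $f$.

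\textbf{Step 2: redistributing derivatives.} On these interactions the symbol satisfies
\[
|\xi|^\alpha|\xi_2|\lesssim |\xi_1|^{\alpha_1}|\xi_2|^{\alpha_2},
\]
since $|\xi|\le|\xi_1|$, $|\xi_2|\le|\xi_1|$, $\alpha_1\ge\alpha$ and $\alpha_1+\alpha_2=1+\alpha$. Explicitly, $|\xi|^\alpha|\xi_2|=|\xi|^\alpha|\xi_2|^{1-\alpha_2}|\xi_2|^{\alpha_2}$, and the remaining exponent $\alpha+1-\alpha_2=\alpha_1$ is split between $|\xi|$ and $|\xi_2|$, each bounded by $|\xi_1|$. (If $\alpha_2>1$, we instead use $|\xi_2|^{1-\alpha_2}\le$ the appropriate power, noting that in the high-high regime $|\xi_2|\sim|\xi_1|$. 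In the high-low regime $|\xi_2|\ll|\xi_1|\sim|\xi|$ this would be delicate; there we use $\alpha_1\ge\alpha$ to absorb.)

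\textbf{Step 3: bilinear multiplier estimate.} We write the operator as a bilinear Fourier multiplier
\[
T(D^{\alpha_1}f,\,D^{\alpha_2}g),\qquad \sigma(\xi_1,\xi_2)=\chi_{\xi_1+\xi_2>0}\,\chi_{\xi_2<0}\,\frac{|\xi_1+\xi_2|^\alpha\,i\xi_2}{|\xi_1|^{\alpha_1}|\xi_2|^{\alpha_2}}.
\]
We then dyadically decompose and show that each dyadic piece is a Coifman--Meyer-type symbol, with the sharp cutoffs $\chi_\pm$ handled by boundedness of $P_\pm$ on $L^{q}$, $1<q<\infty$. After that, Hölder's inequality and the Littlewood--Paley square function give the $L^{q_1}\times L^{q_2}\to L^q$ bound. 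The alternative route is to cite directly Lemma 3.2 of Molinet--Pilod \cite{molinet-pilod}, where exactly this estimate is proved, as done in \cite{chapouto}.

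\textbf{Main obstacle.} The hardest point is the high-low regime $|\xi_2|\ll|\xi_1|\sim|\xi|$ when $\alpha_2>1$. There the symbol gains are not favorable, and the sharp projections prevent a naive use of the Coifman--Meyer theorem. We will handle this by summing over the dyadic scale of $g$ using $\alpha_1\ge\alpha$ together with a geometric sum, controlling the resulting operators by the square function in $L^{q_1}$ and the maximal function in $L^{q_2}$.
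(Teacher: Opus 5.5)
The paper does not prove this lemma; it cites Lemma 3.2 of \cite{Molinet2007}, not \cite{molinet-pilod} as you suggest. So your fallback of citing it is the paper's route. Your Step 1 is the right key observation: on the support, $\xi_1=\xi+|\xi_2|$ dominates both $\xi$ and $|\xi_2|$. Your self-contained sketch, however, puts the difficulty in the wrong place and leaves out the step that actually needs work.

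\textbf{The $\alpha_2>1$ case is vacuous.} The hypotheses $\alpha_1\ge\alpha$ and $\alpha_1+\alpha_2=1+\alpha$ force $\alpha_2=1+\alpha-\alpha_1\le 1$. On the support one has exactly $\frac{|\xi|^\alpha|\xi_2|}{|\xi_1|^{\alpha_1}|\xi_2|^{\alpha_2}}=\big(\frac{|\xi|}{|\xi_1|}\big)^{\alpha}\big(\frac{|\xi_2|}{|\xi_1|}\big)^{1-\alpha_2}\le 1$. If $\alpha_2>1$ were allowed, the estimate would be false: the second factor blows up as $\xi_2\to0$ in the high--low region, and no geometric summation could rescue it. Your ``main obstacle'' paragraph therefore addresses nothing.

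\textbf{The real difficulty is the high--high interaction.} Take $\xi_1\sim|\xi_2|\sim N$ with output $0<\xi\ll N$. Both $|\xi|^\alpha$ and the sharp cutoff $\chi_{\{\xi>0\}}$ are singular along $\xi_1+\xi_2=0$. Hence the $(\xi_1,\xi_2)$-dyadic pieces are not Coifman--Meyer symbols. Boundedness of $P_\pm$ on $L^q$ only disposes of the cutoffs after you have used them to restrict the interactions.

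\textbf{How to repair the direct argument.}
\begin{itemize}
\item Use Step 1 to write $P_+(fP_-\partial_xg)=P_+\sum_N P_Nf\cdot P_{\le 4N}\partial_xP_-g$, with homogeneous dyadic $N$. Only then discard $P_+$ and replace $g$ by $P_-g$.
\item The high--low part $P_Nf\cdot P_{\ll N}\partial_xP_-g$ has output at frequency $\sim N$ and is a standard paraproduct. Control the square function of $N^{\alpha_1}P_Nf$ in $L^{q_1}$. Control $\sup_N N^{\alpha_2-1}|P_{\ll N}\partial_xP_-g|$ by the maximal function of $D^{\alpha_2}_xP_-g$, which works because $\alpha_2\le 1$ (note $\partial_xP_-=-iD_xP_-$).
\item For the high--high part, set $F_N:=P_Nf\cdot P_{\sim N}\partial_xP_-g$. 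When $\alpha>0$, decompose the output into $P_K$ with $K\lesssim N$, and bound $|P_KF_N|$ by the maximal function of $F_N$. Sum in $K$ using the gain $(K/N)^\alpha$ (Schur's test), then remove the maximal function with the Fefferman--Stein inequality. Finally use $|P_{\sim N}\partial_xP_-g|\lesssim N^{1-\alpha_2}$ times the maximal function of $D^{\alpha_2}_xP_-g$, and note $N^{\alpha+1-\alpha_2}=N^{\alpha_1}$.
\item When $\alpha=0$, no output decomposition is needed: Cauchy--Schwarz in $N$ plus square functions suffice.
\end{itemize}
With these additions your direct argument becomes a complete proof. As written, it asserts the nontrivial part rather than proving it.
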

\begin{proof}[Proof]
    See Lemma 3.2 in \cite{Molinet2007}.
\end{proof}
\begin{lemma}  Let $2\leq q <\infty$ and $0\leq \alpha \leq \frac{1}{q}$. Consider $F_1$ and $F_2$ two real valued functions such that $f_j:=\partial_xF_j\in L^2(\mathbb{R})$ for $j\in\{1,2\}$. Then
    \begin{equation}
        \label{fracexp} \left\|J^\alpha_x\left(e^{\frac{\nu}{2}iF_1}g\right)\right\|_{L^q}\lesssim \left(1+\|f_1\|_{L^2}\right)\left\|J^\alpha_xg\right\|_{L^q},
    \end{equation}
    and
    \begin{equation}
        \label{diffracexp} \big\|J^\alpha_x((e^{\frac{\nu}{2}iF_1}-e^{\frac{\nu}{2}F_2})g)\big\|_{L^q}\lesssim \left(\|f_1-f_2\|_{L^2}+\big\|e^{\frac{\nu}{2}iF_1}-e^{\frac{\nu}{2}iF_2}\big\|_{L^\infty}\|f_1\|_{L^2}\right)\left\|J^\alpha_x g\right\|_{L^q}.
    \end{equation}
\end{lemma}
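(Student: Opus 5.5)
The plan is to prove \eqref{fracexp} by interpolation between the endpoint $\alpha=0$ and the endpoint $\alpha=\frac{1}{q}$. A more direct route is a fractional Leibniz decomposition that puts the whole derivative on $g$. Since $F_1$ is real, $|e^{\frac{\nu}{2}iF_1}|=1$, so the case $\alpha=0$ is trivial with constant $1$. For $0<\alpha\le\frac1q\le\frac12$, write $J^\alpha_x\sim 1+D^\alpha_x$ (valid on $L^q$ by the Mikhlin multiplier theorem) and decompose
\begin{equation*}
D^\alpha_x\big(e^{\frac{\nu}{2}iF_1}g\big)=e^{\frac{\nu}{2}iF_1}D^\alpha_x g+\big[D^\alpha_x;e^{\frac{\nu}{2}iF_1}\big]g .
\end{equation*}
The first term is bounded by $\|D^\alpha_x g\|_{L^q}$. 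For the commutator, Lemma \ref{LiLeibniz} with $0<s=\alpha\le1$ gives
\begin{equation*}
\big\|[D^\alpha_x;e^{\frac{\nu}{2}iF_1}]g\big\|_{L^q}\lesssim\big\|D^{\alpha-1}_x\partial_x e^{\frac{\nu}{2}iF_1}\big\|_{L^{p_1}}\|g\|_{L^{p_2}} .
\end{equation*}
Here $\partial_x e^{\frac{\nu}{2}iF_1}=\frac{\nu}{2}if_1e^{\frac{\nu}{2}iF_1}$.

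The main obstacle is choosing exponents so that only $\|f_1\|_{L^2}$ appears on the exponential, while $g$ is measured in a space controlled by $\|J^\alpha_x g\|_{L^q}$. Take $p_2$ with $\frac1{p_2}=\frac1q-\alpha\ge0$; Sobolev embedding gives $\|g\|_{L^{p_2}}\lesssim\|J^\alpha_x g\|_{L^q}$. This is exactly where the restriction $\alpha\le \frac1q$ is used. In the endpoint $\alpha=\frac1q$ one has $p_2=\infty$, where the embedding fails, and I would handle it by interpolating instead. Then $\frac1{p_1}=\alpha$, and the operator $D^{\alpha-1}_x$ of order $-(1-\alpha)$ maps $L^2$ to $L^{p_1}$ by Hardy--Littlewood--Sobolev, since $\frac12-(1-\alpha)=\alpha-\frac12$. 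This has the wrong sign, so a cleaner route is needed.

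I would instead bound $\|D^{\alpha}_x e^{\frac{\nu}{2}iF_1}\|$ directly. Since $e^{\frac{\nu}{2}iF_1}\in L^\infty$ with derivative in $L^2$, Gagliardo--Nirenberg gives $\|D^{\alpha}_x e^{\frac{\nu}{2}iF_1}\|_{L^{p_1}}\lesssim \|e^{\frac{\nu}{2}iF_1}\|_{L^\infty}^{1-2\alpha}\|f_1\|_{L^2}^{2\alpha}$ for $\frac1{p_1}=\alpha$ (using $\dot H^{1}$--$L^\infty$ interpolation in the homogeneous Besov scale). Pairing this with $\|g\|_{L^{p_2}}$, $\frac1{p_2}=\frac1q-\alpha$, through the standard Kato--Ponce product estimate $\|D^\alpha(hg)\|_{L^q}\lesssim\|h\|_{L^\infty}\|D^\alpha g\|_{L^q}+\|D^\alpha h\|_{L^{p_1}}\|g\|_{L^{p_2}}$, and using $\|f_1\|^{2\alpha}_{L^2}\le 1+\|f_1\|_{L^2}$, yields \eqref{fracexp}. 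At $\alpha=\frac1q$ with $p_2=\infty$, I would low/high split $g=P_{lo}g+P_{hi}g$ and use a Besov $B^{1/q}_{q,1}$ argument, or simply replace $p_2$ by a large finite exponent and slightly adjust $p_1$, absorbing the loss.

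For \eqref{diffracexp}, apply the same Kato--Ponce decomposition with $h=e^{\frac{\nu}{2}iF_1}-e^{\frac{\nu}{2}iF_2}$. The $L^\infty$ term gives $\|h\|_{L^\infty}\|J^\alpha_x g\|_{L^q}$. The derivative term needs $\|D^\alpha h\|_{L^{p_1}}$, which by the same interpolation is controlled by $\|h\|_{L^\infty}^{1-2\alpha}\|\partial_x h\|_{L^2}^{2\alpha}$. Next, write
\begin{equation*}
\partial_x h=\tfrac{\nu}{2}i\big(f_1-f_2\big)e^{\frac{\nu}{2}iF_2}+\tfrac{\nu}{2}if_1\big(e^{\frac{\nu}{2}iF_1}-e^{\frac{\nu}{2}iF_2}\big),
\end{equation*}
so $\|\partial_x h\|_{L^2}\lesssim\|f_1-f_2\|_{L^2}+\|h\|_{L^\infty}\|f_1\|_{L^2}$. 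Finally, Young's inequality separates the product of powers into the two terms claimed in \eqref{diffracexp}.
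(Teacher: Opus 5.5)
\textbf{The central bound in your argument is false.} The paper gives no proof of its own here; it only cites Lemma~2.7 of Molinet--Pilod. So the question is whether your Kato--Ponce plus Gagliardo--Nirenberg route closes, and as written it does not. You claim
\[
\|D^\alpha_x e^{\frac{\nu}{2}iF_1}\|_{L^{1/\alpha}}\lesssim \|e^{\frac{\nu}{2}iF_1}\|_{L^\infty}^{1-2\alpha}\|f_1\|_{L^2}^{2\alpha}.
\]
Two facts rule this out.
\begin{itemize}
\item \emph{Scaling.} Replacing $F_1$ by $F_1(\lambda\cdot)$ leaves the left side unchanged but multiplies the right side by $\lambda^{\alpha}$. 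Letting $\lambda\to0$ would force $D^\alpha_x e^{\frac{\nu}{2}iF_1}=0$.
\item \emph{Finiteness.} Take $F_1$ a smoothed step, so $f_1\in C^\infty_c$. Then $e^{\frac{\nu}{2}iF_1}$ has different limits at $\pm\infty$, and $D^\alpha_x e^{\frac{\nu}{2}iF_1}$ behaves like $c\,\sgn(x)|x|^{-\alpha}$ at infinity. This is not in $L^{1/\alpha}$ at all.
\end{itemize}
Interpolating $L^\infty$ with $\dot H^1$ puts $\alpha$ derivatives in $L^{2/\alpha}$ with power $\alpha$, not in $L^{1/\alpha}$ with power $2\alpha$. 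So the pairing $\frac1{p_1}=\alpha$, $\frac1{p_2}=\frac1q-\alpha$ cannot work. Your endpoint discussion at $\alpha=\frac1q$ (Besov spaces, ``absorbing the loss'') is built on this wrong pairing.

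\textbf{How to repair it.} Use the correct inequality
\[
\|D^\alpha_x h\|_{L^{2/\alpha}}\lesssim \|h\|_{L^\infty}^{1-\alpha}\|\partial_x h\|_{L^2}^{\alpha},
\]
which follows by summing $|P_N h|\lesssim \min\big(\|h\|_{L^\infty},\,N^{-1}M(\partial_x h)\big)$ over dyadic $N$. Pair it with $\frac1{p_2}=\frac1q-\frac{\alpha}{2}$. Since $\frac{\alpha}{2}<\alpha$ and $p_2\le 2q<\infty$, Sobolev gives $\|g\|_{L^{p_2}}\lesssim\|J^\alpha_x g\|_{L^q}$ with no endpoint issue; at $\alpha=\frac1q$ one simply has $p_1=p_2=2q$.

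Alternatively, split $e^{\frac{\nu}{2}iF_1}$ into its $P_{\text{lo}}$ and $P_{\text{hi}}$ parts. The low part has all derivatives bounded. For the high part, Bernstein gives $\|P_{\text{hi}}e^{\frac{\nu}{2}iF_1}\|_{H^1}\lesssim\|f_1\|_{L^2}$, hence $\|D^\alpha_x P_{\text{hi}}e^{\frac{\nu}{2}iF_1}\|_{L^p}\lesssim\|f_1\|_{L^2}$ for every $2\le p<\infty$. This produces the linear factor $1+\|f_1\|_{L^2}$ directly.

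\textbf{The difference estimate.} The same correction applies to $h=e^{\frac{\nu}{2}iF_1}-e^{\frac{\nu}{2}iF_2}$. Your formula for $\partial_x h$ is correct. However, the term $\|h\|_{L^\infty}\|J^\alpha_x g\|_{L^q}$ from Kato--Ponce carries no factor of $\|f_1\|_{L^2}$. So the final Young step yields
\[
\|f_1-f_2\|_{L^2}+\|h\|_{L^\infty}\big(1+\|f_1\|_{L^2}\big),
\]
not the displayed right side of \eqref{diffracexp}. This cannot be avoided: as typed, \eqref{diffracexp} fails when $F_1,F_2$ are distinct constants, since the right side vanishes and the left side does not. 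The form with $1+\|f_1\|_{L^2}$ is the one the paper actually uses in its difference estimates. Once the exponents are corrected, your argument proves that intended statement.
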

\begin{proof} [Proof]
    See Lemma 2.7 in \cite{molinet-pilod}. 
\end{proof}

\section{A priori estimates}
\label{Gauge def}\subsection{The gauge transformation}
In the upcoming sections, we present the necessary tools for the proof of Theorem \ref{Th2}. Following \cite{molinet-pilod}, we first construct a spatial primitive $F$ of $v$ (i.e. $F_x=v$) that satisfies the equation \eqref{F}. 
Consider $\psi\in C^{\infty}_0(\mathbb{R})$ such that $\int_\mathbb{R}\psi(y)\,dy=1$ and define 
\begin{equation}
    \label{primitive} F(x,t):=\int_\mathbb{R}\psi(y)\int_y^x v(t,z)\,dz\,dy\,+ G(t),
\end{equation}
where $G=G(t)$ is a function to be found. Note that $F_x=v$ and 
\begin{align*}
    &F_t(t,x)\\
            = &\int_\mathbb{R}\psi(y)\int_y^x\Big(\nu \mathscr{H}v_{zz}-\frac{\nu^2}{2}\left(v^2\right)_z+\nu\mathscr{Q}_\delta v_z+\nu\left(|u|^2\right)_z\Big)(t,z)\,dz\,dy+G'(t) \\
            = &\nu \mathscr{H}v_{x}-\frac{\nu^2}{2}v^2+\nu\mathscr{Q}_\delta v + \nu |u|^2 - \int_{\mathbb{R}}\Big(-\nu\mathscr{H}\psi'(y)\,v(t,y)-\psi(y)\frac{\nu^2}{2}v(t,y)^2\\&+\nu\psi(y)\mathscr{Q}_\delta\, v(t,y)+\nu\psi(y)|u(t,y)|^2\Big)\,dy+ G'(t).
\end{align*}
Therefore, we choose G as 
\begin{multline}
    \label{G-function} G(t)= \int_0^t\int_{\mathbb{R}}\big(\nu\mathscr{H}\psi'(y)\,v(t',y)-\psi(y)\frac{\nu^2}{2}v(t',y)^2\\+\nu\psi(y)\mathscr{Q}_\delta\, v(t',y)+\nu\psi(y)|u(t',y)|^2\big)dydt'
\end{multline}
to ensure that \eqref{F} is satisfied. Observe that this construction makes sense for $u,v\in L^2_{\text{loc}}(\mathbb{R}^2)$. Next, we introduce the new unknown 
\begin{equation}
  \label{gauge}  W:= P_{+\text{hi}}\big(e^{-\frac{\nu}{2} i F}\big) \, \, \, \, \text{and} \, \,\, \, w:= W_x = -\frac{\nu}{2}iP_{+\text{hi}}\big(e^{-\frac{\nu}{2} i F}v\big). 
\end{equation}
Setting $\rho=\nu^2$, from \eqref{F} it follows that 
\begin{align*}
    &W_{t}-\nu\mathscr{H}W_{xx}\\
    =&-\frac{\nu}{2}iP_{+\text{hi}}\left(e^{-\frac{\nu}{2}iF}F_t\right)+\nu i \left(P_+-P_{-}\right)P_+P_{\text{hi}}\partial_x^2\left(e^{-\frac{\nu}{2}iF}\right)\\
    =& -\frac{\nu}{2}iP_{+\text{hi}}\Big(e^{-\frac{\nu}{2}i}\big(F_t+\frac{\nu^2}{2}F_x^2+\nu i F_{xx}\big)\Big)\\
    =& -\frac{\nu}{2}iP_{+\text{hi}}\left(e^{-\frac{\nu}{2}i}\left(\nu\mathscr{H}F_{xx}+\nu\mathscr{Q}_\delta F_x + \nu |u|^2+\nu i F_{xx}\right)\right)\\
    =&\,\,\,\nu^2P_{+\text{hi}}\left(e^{-\frac{\nu}{2}i}P_{-}v_x\right)-\frac{\nu^2}{2}iP_{+\text{hi}}\left(e^{-\frac{\nu}{2}i}\mathscr{Q}_\delta v\right)-\frac{\nu^2}{2}iP_{+\text{hi}}\left(e^{-\frac{\nu}{2}i}|u|^2\right).
\end{align*}
Regarding the first term, note that 
\begin{align*}
    P_{+\text{hi}}\left(e^{-\frac{\nu}{2}i}P_{-}v_x\right)&= P_{+\text{hi}}\left(\left(P_{+\text{hi}}+P_{-hi}+P_{\text{lo}}\right)e^{-\frac{\nu}{2}i}P_{-}v_x\right)\\&=P_{+\text{hi}}\left(WP_{-}v_x\right) + P_{+\text{hi}}\left(P_{\text{lo}}e^{-\frac{\nu}{2}i}P_{-}v_x\right).
\end{align*}
From differentiating it follows the equation \eqref{g}. We recall that we now consider system \eqref{Schr-ILW-T} joined with equation \eqref{g} as a system $3\times3$. We start recovering $v$ in terms of $w$ in the following way: 
 \begin{align*}
     v &= e^{\frac{\nu}{2}iF}e^{-\frac{\nu}{2}iF}v \\
       &= e^{\frac{\nu}{2}iF}\left(\left(P_++P_-\right)P_{\text{hi}}+P_{lo}\right)\left(e^{-\frac{\nu}{2}iF}v\right)\\
       &= \frac{2}{\nu}ie^{\frac{\nu}{2}iF}w+e^{\frac{\nu}{2}iF}P_{\text{lo}}\left(e^{-\frac{\nu}{2}iF}v\right)+\frac{2}{\nu}ie^{\frac{\nu}{2}iF}\partial_xP_{-\text{hi}}\left(e^{-\frac{\nu}{2}iF}\right),
 \end{align*}
 where in the last line we assume that $\nu\not=0$. Moreover, from frequency localization, 
 \begin{multline}
     \label{PHI} P_{+\text{HI}}\,v = \frac{2}{\nu}iP_{+\text{HI}}\left(e^{\frac{\nu}{2}iF}w\right)+P_{+\text{HI}}\left(P_{+\text{hi}}e^{\frac{\nu}{2}iF}P_{\text{lo}}\left(e^{-\frac{\nu}{2}iF}v\right)\right)\\+\frac{2}{\nu}iP_{+\text{HI}}\left(P_{+\text{HI}}e^{\frac{\nu}{2}iF}\partial_xP_{-\text{hi}}\left(e^{-\frac{\nu}{2}iF}\right)\right).
 \end{multline}
 This fact is so useful to derive the following estimates: \\
 
 \begin{proposition} Let $0\leq s \leq 1$, $0<T\leq1$, $0\leq\theta\leq1$, $\nu\not=0$, and $(u,v)$ be a solution of the system \eqref{Schr-ILW-T} in the interval $[0,T]$, with initial data $(u(0),v(0))=(u_0,v_0)\in H^{s+\frac{1}{2}}(\mathbb{R})\times H^s(\mathbb{R})$. Then
               \begin{multline}
         \label{apriori1} \|v\|_{X^{s-\theta,\theta}_{\text{BO}}(T)} \lesssim T^{\frac{1}{2}}\delta^{-1}\left(1+\delta^{-s}\right)\|v\|_{L^\infty_TL^2_x}+\|v\|_{L^\infty_T H^s_x}+\|v\|_{L^4_{T,x}}\|v\|_{L^4_T W^{s,4}_x} \\+ T^\frac{1}{2}\|u\|_{X^{\frac{1}{2},b}_\text{S}(T)}\|u\|_{X^{s+\frac{1}{2},b}_\text{S}(T)}. 
     \end{multline}
Moreover, if $0\leq s \leq \frac{1}{4}$, we have 
     \begin{multline}
         \label{apriori2} \|J^s_xv\|_{L^q_T L^p_x}\lesssim \|v_0\|_{L^2_x}+T\delta^{-1}\|v\|_{L^\infty_TL^2_x}\\+\left(1+\|v\|_{L^\infty_TL^2_x}\right)\left(\|w\|_{Y^{s,\frac{1}{2}}(T)}+T^{1+\frac{1}{p}}\|v\|_{L^\infty_TL^2_x}^2\right)+T\|u\|_{X^{\frac{1}{2},b}_\text{S}(T)}^2.  
     \end{multline}
for $(p,q)\in\left\{(\infty,2),(4,4)\right\}$. 
 \end{proposition}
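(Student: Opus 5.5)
For \eqref{apriori1} I would follow the proof of the analogous estimate for Benjamin--Ono in \cite{molinet-pilod}, as adapted to ILW in \cite{chapouto}, and treat the coupling term $\nu(|u|^2)_x$ as an extra forcing. Write the second equation of \eqref{Schr-ILW-T} as
\[
v_t-\nu\mathscr{H}v_{xx}=\nu\mathscr{Q}_\delta v_x-\tfrac{\rho}{2}(v^2)_x+\nu(|u|^2)_x .
\]
Take an extension of $v$ outside $[0,T]$ and interpolate between two norms:
\[
\|v\|_{X^{s-\theta,\theta}_{\text{BO}}(T)}\lesssim \|v\|_{X^{s,0}_{\text{BO}}(T)}+\|v\|_{X^{s-1,1}_{\text{BO}}(T)} .
\]
The first piece is bounded by $\|v\|_{L^2_TH^s_x}\lesssim\|v\|_{L^\infty_TH^s_x}$ since $T\le 1$. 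For the second piece, $\|v\|_{X^{s-1,1}_{\text{BO}}(T)}\lesssim \|v\|_{L^2_TH^{s-1}_x}+\|(\partial_t-\nu\mathscr{H}\partial_x^2)v\|_{L^2_TH^{s-1}_x}$, and I estimate the right-hand side of the equation term by term.
\begin{itemize}
\item \textbf{Dispersive perturbation.} By \eqref{qdelta},
\[
\|\mathscr{Q}_\delta v_x\|_{L^2_TH^{s-1}_x}\le\|\mathscr{Q}_\delta v\|_{L^2_TH^s_x}\lesssim T^{1/2}\delta^{-1}(1+\delta^{-s})\|v\|_{L^\infty_TL^2_x}.
\]
\item \textbf{Burgers term.} We have $\|(v^2)_x\|_{H^{s-1}}\lesssim\|J^s(v^2)\|_{L^2}$. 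The fractional Leibniz rule (Lemma~\ref{LiLeibniz}) with Hölder in $(t,x)$ gives $\|J^s_x(v^2)\|_{L^2_{T,x}}\lesssim\|v\|_{L^4_{T,x}}\|v\|_{L^4_TW^{s,4}_x}$.
\item \textbf{Coupling term.} Here $\|(|u|^2)_x\|_{L^2_TH^{s-1}_x}\lesssim T^{1/2}\|J^s(|u|^2)\|_{L^\infty_TL^2_x}$. By the fractional Leibniz rule, $\|J^s(|u|^2)\|_{L^2}\lesssim\|u\|_{L^\infty}\|J^su\|_{L^2}$. Sobolev embedding gives $\|u\|_{L^\infty}\lesssim\|u\|_{H^{1/2+}}$. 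The mismatch between $H^{1/2}$ and $H^{1/2+}$ is absorbed because $J^s u$ has an extra half derivative available, using instead $\|u\|_{L^4}\|J^s u\|_{L^4}$ and $H^{1/4}\hookrightarrow L^4$. Finally $X^{\sigma,b}_{\text{S}}(T)\hookrightarrow C_TH^\sigma_x$ for $b>1/2$ yields $T^{1/2}\|u\|_{X^{1/2,b}_{\text{S}}(T)}\|u\|_{X^{s+1/2,b}_{\text{S}}(T)}$.
\end{itemize}

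For \eqref{apriori2} I would split $v=P_{\text{LO}}v+P_{-\text{HI}}v+P_{+\text{HI}}v$. Since $v$ is real, the negative high part is the conjugate of the positive one, so it suffices to treat the low and positive high parts.

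The low part is controlled by Bernstein, $\|J^sP_{\text{LO}}v\|_{L^q_TL^p_x}\lesssim T^{1/q}\|v\|_{L^\infty_TL^2_x}$. This should then be refined through the Duhamel formula as in \cite{molinet-pilod}, \cite{chapouto} to produce the terms $\|v_0\|_{L^2}$ and $T\delta^{-1}\|v\|_{L^\infty_TL^2}$. The coupling contributes $T\|P_{\text{LO}}(|u|^2)_x\|_{L^\infty_TL^2}\lesssim T\|u\|_{L^\infty_TL^4_x}^2\lesssim T\|u\|^2_{X^{1/2,b}_{\text{S}}(T)}$.

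For the high part I use the representation \eqref{PHI} and treat its three terms.
\begin{itemize}
\item \textbf{Main term.} Apply \eqref{fracexp} with $\alpha=s\le 1/4\le 1/p$ (or $1/q$, depending on the pair); this is where $s\le1/4$ enters. Combined with \eqref{BouSt} and the embedding of $Y^{s,\frac12}$ into $L^4_{T,x}$ and $L^2_TL^\infty_x$ (via the $\tilde Z$ component), it gives
\[
\|J^sP_{+\text{HI}}(e^{\frac{\nu}{2}iF}w)\|_{L^q_TL^p_x}\lesssim(1+\|v\|_{L^\infty_TL^2})\|w\|_{Y^{s,\frac12}(T)}.
\]
\item \textbf{Remaining two terms.} The $s$ derivatives fall on $e^{\frac{\nu}{2}iF}$ and can be traded for $v$. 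With Bernstein on the low-frequency factor this gives $T^{1/q}\|v\|^2_{L^\infty_TL^2}$-type bounds, which is the source of the $T^{1+1/p}\|v\|_{L^\infty L^2}^2$ term.
\end{itemize}

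The main obstacle is the $L^2_TL^\infty_x$ case ($p=\infty$) for the main term. Here \eqref{fracexp} is not available for $q=\infty$, so I would first try to work dyadically with $\tilde Z$ and use that $P_{+\text{HI}}$ places the derivative on the high frequency of $w$, following Molinet--Pilod's treatment. The other delicate step is the coupling term in \eqref{apriori1} with only $H^{1/2}$ control on the low-regularity factor.
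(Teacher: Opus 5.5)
Your proof of \eqref{apriori1} is the paper's. After conjugating by $V(-t)$, the paper's split into $|\tau|\le|\xi|$ and $|\tau|\ge|\xi|$ is exactly your pointwise interpolation between $X^{s,0}_{\text{BO}}$ and $X^{s-1,1}_{\text{BO}}$. The forcing terms are estimated as you do, with the coupling handled through $\|u\|_{L^4_x}\|J^s_xu\|_{L^4_x}$ and $H^{1/4}\hookrightarrow L^4$. One caveat: the interpolation has to be done on a single extension, not on the two restriction norms separately. The paper uses $\eta(t)V(t)\Upsilon(t)$, with $\Upsilon=V(-t)v$ on $[0,T]$ and frozen outside. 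The lower-order term is then $\|v\|_{L^\infty_TH^{s-1}_x}$. Your $L^2_T$ version fails for free solutions as $T\to 0$, but this is harmless here.

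The gap is in \eqref{apriori2}, and it comes from the order of the indices. In the paper's proof the norm is $L^p_TL^q_x$ with $p$ the \emph{time} exponent. That is what produces the factors $T^{1/p}$ in \eqref{PLO} and \eqref{33}, and the case $(p,q)=(\infty,2)$ is later used to bound $\|\tilde v\|_{L^\infty_TL^2_x}$ and the $L^\infty_TH^s_x$ part of $N^s_T$. The displayed statement transposes the indices, so the two cases are really $L^\infty_TH^s_x$ and $L^4_TW^{s,4}_x$.

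You read the first case as $L^2_TL^\infty_x$ and left it open. You are right that \eqref{fracexp} is unavailable at $L^\infty_x$. But your embedding $Y^{s,\frac12}\hookrightarrow L^2_TL^\infty_x$ ``via $\tilde Z$'' is not justified: what the $\tilde Z^{s,0}$ component gives is $L^\infty_tH^s_x$, because $|\mathcal{F}_xu(t,\xi)|\le\|\hat u(\cdot,\xi)\|_{L^1_\tau}$. With the correct reading the obstacle disappears. Apply \eqref{fracexp} with $q=2$ and $\alpha=s\le\frac12$, and bound $\|J^s_xw\|_{L^\infty_TL^2_x}$ by the $\tilde Z$ part of $\|w\|_{Y^{s,\frac12}(T)}$. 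The condition $s\le\frac14$ is then needed only in the $L^4_{T,x}$ case.

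Two smaller points on the high-frequency remainders in \eqref{PHI}.

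\textbf{Third term.} It carries $1+s$ derivatives on $P_{+\text{HI}}e^{\frac{\nu}{2}iF}\,\partial_xP_{-\text{hi}}e^{-\frac{\nu}{2}iF}$. Beyond $L^\infty$, each factor is controlled only in $\dot H^1$, by $\|v\|_{L^2_x}$. So saying ``the $s$ derivatives fall on $e^{\frac{\nu}{2}iF}$'' misses the issue: a plain Leibniz/H\"older argument puts too many derivatives on the $P_-$ factor. The paper instead uses \eqref{FLR'}, which exploits the $P_+(f\,P_-\partial_xg)$ structure, with $\alpha_1=\alpha_2=\frac{1+s}{2}$ and $q_1=q_2=2q$. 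It follows this with Sobolev, which again requires $s\le\frac1q$.

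\textbf{Power of $T$.} These remainders give only $T^{1/p}\|v\|^2_{L^\infty_TL^2_x}$, exactly as in \eqref{DHI}. The power $T^{1+1/p}$ comes from the Burgers term in the low-frequency Duhamel formula \eqref{IPLO}, not from the high-frequency part.
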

 \begin{proof}[Proof]
 We start by deriving the estimate \eqref{apriori1}. Following \cite{molinet-pilod}, define 
     \begin{equation*}
        \Upsilon (t)=\begin{cases}
            v(0),  \hspace{50pt} -2\leq t\leq 0 \\
            \text{V}(-t)v(t), \hspace{31pt}   0\leq t\leq T\\
            \text{V}(-T)v(T), \hspace{23pt}  T\leq t\leq 2
        \end{cases}.
     \end{equation*}
     Note that $\partial_t\Upsilon=0$ in $[-2,2]\setminus[0,T]$, and for all $r\in\mathbb{R},$
     \begin{equation}
         \label{faim} \left\|\partial_t\Upsilon\right\|_{L^2_{[-2,2]}H^r_x} = \left\|\partial_t v\right\|_{L^2_{T}H^r_x} \, \, \text{and} \, \,\left\|\Upsilon\right\|_{L^2_{[-2,2]}H^r_x} \lesssim  \|v_0\|_{H^r}+T^{\frac{1}{2}}\|v\|_{L^\infty_T H^r_x}. 
     \end{equation}
     Define $\tilde{v}(t):= \eta(t)\text{V}(t)\Upsilon(t)$. Note that $\tilde{v}$ is an extension of $v$. Moreover, 
     \begin{align*}
         \|\tilde{v}\|_{X^{s-\theta,\theta}_\text{BO}}&:= \left\|\langle\tau\rangle^\theta\langle\xi\rangle^{-\theta}\left(J^s_x\text{V}(-t)\tilde{v}\right)^\wedge(\tau,\xi)\right\|_{L^2_{\tau,\xi}}\\
         &\sim \Big(\int_{\{|\tau|\leq|\xi|\}}\bigg(\frac{1+|\tau|}{1+|\xi|}\bigg)^{2\theta}\big|\big(J^s_x\big(\eta(t)\Upsilon\big)\big)^\wedge(\tau,\xi)\big|^2\,d\tau d\xi\Big)^{1/2}\\&\textcolor{white}{+}+\Big(\int_{\{|\tau|\geq|\xi|\}}\bigg(\frac{1+|\tau|}{1+|\xi|}\bigg)^{2\theta}\left|\left(J^s_x\left(\eta(t)\Upsilon\right)\right)^\wedge(\tau,\xi)\right|^2\,d\tau d\xi\Big)^{1/2}:= I + II,
     \end{align*}
     \begin{equation*}
         I \leq \Big(\int_{\{|\tau|\leq|\xi|\}}\left|\left(J^s_x\left(\eta(t)\Upsilon\right)\right)^\wedge(\tau,\xi)\right|^2\,d\tau d\xi\Big)^{\frac{1}{2}} \leq \|\eta(t)\Upsilon\|_{L^2_{[-2,2]}H^s_x}\lesssim \|v\|_{L^\infty_T H^s_x},
     \end{equation*}
     where the last inequality is due to \eqref{faim}. On the other hand, 
     \begin{align*}
         II &\leq \Big(\int_{\{|\tau|\geq|\xi|\}}\bigg(\frac{1+|\tau|}{1+|\xi|}\bigg)^2\left|\left(J^s_x\left(\eta(t)\Upsilon\right)\right)^\wedge(\tau,\xi)\right|^2\,d\tau d\xi\Big)^{\frac{1}{2}}\\
         &\lesssim \left\|\eta(t)\Upsilon\right\|_{L^2_t H^{s-1}_x}+\left\|\partial_t\left(\eta(t)\Upsilon\right)\right\|_{L^2_t H^{s-1}_x} \lesssim T^\frac{1}{2}\|v\|_{L^\infty_T H^{s-1}_x} + \left\|\partial_t v\right\|_{L^2_T H^{s-1}_x},
     \end{align*}
     where the last inequality is due to \eqref{faim}. Now, to bound the second term, at first we use the identity 
     \begin{equation*}
         \partial_t v = \partial_t\left(\text{V}(-t)v\right)= \text{V}(-t)\left[\nu\mathscr{H}\partial^2_xv-\partial_tv\right],
     \end{equation*}
     then, we use the second equation of the system \eqref{Schr-ILW-T}, and after that, the fractional Leibniz rule (see \cite{kenig1993wellposedness}) and the estimate \eqref{qdelta}, obtaining 
     \begin{align*}
         \left\|\partial_tv\right\|_{L^2_T H^{s-1}_x} &\lesssim \|v\|_{L^4_{T,x}}\|v\|_{L^4_T W^{s,4}_x}+ \delta^{-1}\left(1+\delta^{-s}\right)\|v\|_{L^2_{T,x}}+\|u\|_{L^4_{T,x}}\|u\|_{L^4_T W^{s,4}_x}\\
         &\leq \|v\|_{L^4_{T,x}}\|v\|_{L^4_T W^{s,4}_x}+ T^{\frac{1}{2}}\delta^{-1}\left(1+\delta^{-s}\right)\|v\|_{L^\infty_{T} L^2_x}+\|u\|_{L^4_{T,x}}\|u\|_{L_T^4 W^{s,4}_x}.
     \end{align*}
     Finally, we consider the third term on the last inequality. Indeed, using the Sobolev and Bourgain embeddings, we get
     \begin{align*}
         \|u\|_{L^4_{T,X}} \|u\|_{L^4_{T}W^{s,4}_x}&=\bigg(\int_0^T \|u(t)\|^4_{L^4_x}\,dt\bigg)^{1/4}\bigg(\int_0^T \|J^s_xu(t)\|^4_{L^4_x}\,dt\bigg)^{1/4}\\
         &\lesssim \bigg(\int_0^T \big\|J^{\frac{1}{2}-\frac{1}{4}}_xu(t)\big\|^4_{L^2_x}\,dt\bigg)^{1/4}\bigg(\int_0^T \big\|J^{s+\frac{1}{2}-\frac{1}{4}}_xu(t)\big\|^4_{L^2_x}\,dt\bigg)^{1/4}\\
         &\leq T^{1/2}\|u\|_{L^\infty_T H^{1/2}_x}\|u\|_{L^\infty_T H^{s+1/2}_x} \lesssim T^{1/2}\|u\|_{X^{1/2,b}_\text{S}(T)}\|u\|_{X^{s+1/2,b}_\text{S}(T)}. 
     \end{align*}
Then, it follows the estimate \eqref{apriori1}. Now we turn to the proof of the inequality \eqref{apriori2}. We follow the proof of  Proposition 3.2 in \cite{molinet-pilod} and  Lemma 3.4 in \cite{chapouto}. Indeed, since $v$ is real-valued, it holds that $P_-v=\overline{P_+v}$ so that 
\begin{equation}
    \label{lowhight} \left\|J^s_x v\right\|_{L^p_TL^q_x}\lesssim \left\|P_\text{LO}v\right\|_{L^p_TL^q_x}+\left\|D^s_xP_{+\text{HI}}\,v\right\|_{L^p_TL^q_x}.
\end{equation}
We consider the second term on the right hand side of \eqref{lowhight}. By identity \eqref{PHI},
\begin{equation}
    \label{altisima}
    \begin{split}
\big\|D^s_xP_{+\text{HI}}\,v\big\|_{L^p_TL^q_x}&\lesssim \big\|D^s_xP_{+\text{HI}}\big(e^{\frac{\nu}{2}iF}w\big)\big\|_{L^p_TL^q_x}+\big\|D^s_xP_{+\text{HI}}\big(P_{+\text{hi}}e^{\frac{\nu}{2}iF}P_{\text{lo}}\big(e^{-\frac{\nu}{2}iF}v\big)\big)\big\|_{L^p_TL^q_x}\\&+\big\|D^s_xP_{+\text{HI}}\big(P_{+\text{HI}}\,e^{\frac{\nu}{2}iF}\partial_xP_{-\text{hi}}\big(e^{-\frac{\nu}{2}iF}\big)\big)\big\|_{L^p_TL^q_x}\\
&:= \tilde{I}+\tilde{II}+\tilde{III}.
    \end{split}
\end{equation} Estimates \eqref{fracexp} and \eqref{BouSt} yield 
\begin{equation}
    \label{11}\tilde{I}\lesssim \big(1+\|v\|_{L^\infty_TL^2_x}\big)\big\|J^s_xw\big\|_{L^p_TL^q_x}\lesssim \big(1+\|v\|_{L^\infty_TL^2_x}\big)\|w\|_{Y^{s,\frac{1}{2}}(T)}.
\end{equation} On the other hand, the fractional Leibniz rule, Bernstein's inequality in time and the Sobolev embedding imply that 
\begin{equation}
   \begin{split}
        \label{22} 
        &\tilde{II}^p\\ \lesssim &\int_{0}^{T}\big\|D^s_xP_{+\text{hi}}e^{\frac{\nu}{2}iF}\big\|_{L^q_x}^p\big\|P_{\text{lo}}\big(e^{-\frac{\nu}{2}iF}v\big)\big\|_{L^\infty_x}^p+\big\|P_{+\text{hi}}e^{\frac{\nu}{2}iF}\big\|_{L^\infty_x}^p\big\|D^s_xP_\text{lo}\big(e^{-\frac{\nu}{2}iF}v\big)\big\|_{L^q_x}^p\,dt\\
        \lesssim & \int_{0}^{T}\Big\|D^{s+\frac{1}{2}-\frac{1}{q}}_xP_{+\text{hi}}e^{\frac{\nu}{2}iF}\Big\|_{L^2_x}^p\big\|P_{\text{lo}}\big(e^{-\frac{\nu}{2}iF}v\big)\big\|_{L^2_x}^p+\big\|\partial_xP_{+\text{hi}}e^{\frac{\nu}{2}iF}\big\|_{L^2_x}^p\big\|D^s_xP_\text{lo}\big(e^{-\frac{\nu}{2}iF}v\big)\big\|_{L^2_x}^p\,dt\\
        \lesssim &\,\, T\,\|\partial_x P_{+\text{hi}}e^{\frac{\nu}{2}iF}\|_{L^\infty_TL^2_x}^p\big\|P_\text{lo}\big(e^{-\frac{\nu}{2}iF}v\big)\big\|_{L^\infty_TL^2_x}^p\lesssim T \|v\|_{L^\infty_T L^2_x}^{2p}. 
    \end{split}
\end{equation} Finally, estimate \eqref{FLR'} with $\alpha_1=\alpha_2=(1+s)/2$ and $q_1=q_2=2q$, Bernstein's inequality and the Sobolev embedding lead to 
\begin{equation}
    \label{33}
    \begin{split}
        \tilde{III} &\lesssim \Big\|D^\frac{1+s}{2}_xP_{+\text{HI}}e^{\frac{\nu}{2}iF}\Big\|_{L^{2p}_TL^{2q}_x} \Big\|D^\frac{1+s}{2}_xP_{-\text{hi}}e^{-\frac{\nu}{2}iF}\Big\|_{L^{2p}_TL^{2q}_x}\\
        &\lesssim T^\frac{1}{p}\Big\|D^{1+\frac{s}{2}-\frac{1}{2q}}_xP_{+\text{HI}}e^{\frac{\nu}{2}iF}\Big\|_{L^\infty_TL^2_x} \Big\|D^\frac{s+1}{2}_xP_{-\text{hi}}e^{-\frac{\nu}{2}iF}\Big\|_{L^\infty_TL^2_x}\\
        &\lesssim T^{1/p}\big\|\partial_xP_{+\text{HI}}e^{\frac{\nu}{2}iF}\big\|_{L^\infty_TL^2_x}\big\|\partial_xP_{-\text{hi}}e^{-\frac{\nu}{2}iF}\big\|_{L^\infty_TL^2_x}\lesssim T^{1/p}\|v\|^2_{L^\infty_TL^2_x}.
    \end{split}
\end{equation} Applying \eqref{11}, \eqref{22} and \eqref{33} in \eqref{altisima}, we get 
\begin{equation}
    \label{DHI} \left\|D^s_xP_{+\text{HI}}\,v\right\|_{L^p_TL^q_x} \lesssim \left(1+\|v\|_{L^\infty_TL^2_x}\right)\big(\|w\|_{Y^{s,\frac{1}{2}}(T)}+T^{1/p}\|v\|^2_{L^\infty_TL^2_x}\big). 
\end{equation}
For the low-frequency contribution, at first we use Bernstein's inequality to obtain
\begin{equation}
    \label{PLO} \left\|P_\text{LO}v\right\|_{L^p_TL^q_x} \lesssim T^{1/p}\left\|P_\text{LO}v\right\|_{L^\infty_TL^2_x}.
\end{equation}
Next, we consider the integral equation satisfied by $P_\text{LO}\,v$,
\begin{multline}
    \label{IPLO} P_\text{LO}v(t)= \text{V}(t)P_\text{LO}v_0-\frac{\nu^2}{2}\int_0^t\text{V}(t-t')\partial_xP_\text{LO}\left(v(t')^2\right)\,dt'\\+\nu\int_0^t\text{V}(t-t')\partial_x\mathscr{Q}_\delta P_\text{LO}v(t')\,dt'+\nu\int_0^t\text{V}(t-t')\partial_x P_\text{LO}\left(|u(t')|^2\right)\,dt'.
\end{multline}
Applying inequality \eqref{PLO} to \eqref{IPLO}, it is enough to estimate the $L^\infty_TL^2_x$ norm for each term on the right hand side of \eqref{IPLO}. Indeed, using the Bernstein´s inequality, the Young´s inequality for convolution and \eqref{qdelta}, the $L^\infty_TL^2_x$ norm of the three first terms of the right hand side of \eqref{IPLO} is bounded by
\begin{equation}
    \label{Con1} \|v_0\|_{L^2_x}+T^{1+\frac{1}{p}}\|v\|_{L^\infty_TL^2_x}^2. 
\end{equation} Now, consider the $L^\infty_TL^2_x$ norm of the fourth term. Indeed, for all $0\leq t\leq T$,  
\begin{align*}
    \Big\|\int_0^t\text{V}(t-t')\partial_xP_\text{LO}\left(|u|^2\right)\,dt'\Big\|_{L^2_x}\lesssim \int_0^t\left\||u|^2\right\|_{L^2_x}\,dt'=\int_0^t\left\|u\right\|_{L^4_x}^2\,dt'\lesssim \int_0^t\|u\|^2_{H^\frac{1}{2}_x}\,dt', 
\end{align*}
where the last inequality follows from a classic Sobolev embedding. Hence,
\begin{equation}
    \label{Con3} \Big\|\int_0^t\text{V}(t-t')\partial_xP_\text{LO}\left(|u|^2\right)\,dt'\Big\|_{L^\infty_TL^2_x} \lesssim \int_0^T\|u\|^2_{H^\frac{1}{2}_x}\,dt'\lesssim T\|u\|_{X^{\frac{1}{2},b}_\text{S}(T)}^2, 
\end{equation}
where the last inequality follows from a classic Bourgain embedding. Combining \eqref{Con1}, and \eqref{Con3}, we get
\begin{equation}
    \label{PLOob}  \left\|P_\text{LO}v\right\|_{L^p_TL^q_x} \lesssim \|v_0\|_{L^2_x}+T^{1+\frac{1}{p}}\|v\|_{L^\infty_TL^2_x}^2+T\delta^{-1}\|v\|_{L^\infty_TL^2_x}+T\|u\|_{X^{\frac{1}{2},b}_\text{S}(T)}^2.
\end{equation}
Finally, from \eqref{PLOob}, \eqref{DHI} and \eqref{lowhight}, it follows the estimate \eqref{apriori2}.
 \end{proof}

\section{Nonlinear estimates}
\subsection{The nonlinear terms in w}
We now consider the equation \eqref{g}. Applying the linear estimates \eqref{2-5} and \eqref{2-7} on the Duhamel formulation of \eqref{g}, we get 
\begin{equation}
   \label{doblew} \|w\|_{Y^{s,\frac{1}{2}}(T)}\lesssim\|w(0)\|_{H^s} + \left\|\chi_{[0,T]}(t)\mathscr{N}_\delta(u,v,w)\right\|_{Y^{s,-\frac{1}{2}}}.
\end{equation} We aim to estimate each term of the right hand side of \eqref{doblew}. At first, By fractional Leibniz rule,
\begin{equation}
    \label{w_0}\|w(0)\|_{H^s_x}\lesssim \left(1+\|v_0\|_{L^2}\right)\|v_0\|_{H^s}
\end{equation}
for all $0\leq s \leq \frac{1}{2}$. On the other hand, in \cite{molinet-pilod} the first three terms of $\mathscr{N}_\delta(u,v,w)$ were considered. \\

\begin{proposition}
Let 
    \begin{multline*}
        \tilde{N}_\delta(v,w):= \nu^2 \partial_x\text{P}_{\text{+hi}}\left(\partial_x^{-1}w \, \text{P}_-\partial_xv\right)+\nu^2 \partial_x\text{P}_{\text{+hi}}\left(\text{P}_{\text{lo}}e^{-\frac{\nu}{2}iF} \, \text{P}_-\partial_xv\right)\\
    -\frac{\nu^2}{2}i \partial_x\text{P}_{\text{+hi}}\left(e^{-\frac{\nu}{2}iF} \, \mathscr{Q}_\delta v\right).
    \end{multline*}
    Then, for all $0\leq s \leq \frac{1}{2}$ there exists $\kappa>0$ such that 
    \begin{multline}
        \label{bilcha} \big\|\chi_{[0,T]}(t)\tilde{\mathscr{N}}_\delta(v,w)\big\|_{Y^{s,-\frac{1}{2}}}\lesssim \|v\|_{L^4_{T,x}}^2 \\+ \big(T^\kappa \|v\|_{L^\infty_T L^2_x}+\|v\|_{L^4_{T,x}}+\|v\|_{X^{-1,1}_\text{BO}(T)}\big)\|w\|_{X^{s,\frac{1}{2}}_\text{BO}(T)}.
    \end{multline}
\end{proposition}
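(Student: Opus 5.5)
The estimate \eqref{bilcha} only involves the three terms of $\tilde{\mathscr{N}}_\delta(v,w)$, which do not contain $u$. So the plan is to reduce it to the corresponding estimates of Molinet--Pilod \cite{molinet-pilod} (for the first two terms) and of Chapouto--Li--Oh--Pilod \cite{chapouto} (for the $\mathscr{Q}_\delta$ term). The one thing to check is that the extra forcing $\nu|u|^2$ in \eqref{F} does not spoil those arguments. It does not: in \cite{molinet-pilod} the only properties of $F$ used are that $F$ is real-valued, so $|e^{-\frac{\nu}{2}iF}|=1$, and that $\partial_xF=v$, which gives $\|\partial_x P_{\pm\text{hi}} e^{\mp\frac{\nu}{2}iF}\|_{L^2}\lesssim\|v\|_{L^2}$. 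Both still hold for our primitive \eqref{primitive}.

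\textbf{Step 1 (term $\partial_xP_{+\text{hi}}(\partial_x^{-1}w\,P_-\partial_xv)$).} Here $w$ has positive high frequencies, $P_-\partial_x v$ negative frequencies, and the output is at positive high frequencies. Hence the output frequency is smaller than that of $\partial_x^{-1}w$, and the derivative can be moved off $v$.
\begin{itemize}
\item I would decompose dyadically in the frequencies of $w$, $v$ and the output.
\item I would use the dual formulation of the $X^{s,-\frac12}_\text{BO}$ and $\tilde Z^{s,-1}$ norms.
\item I would exploit the BO resonance relation $|\sigma_{\max}|\gtrsim |\xi_{\min}||\xi_{\max}|$.
\item The interactions are split according to which modulation dominates: that of $v$, of $w$, or of the output.
\end{itemize}
When the modulation of $v$ dominates, $v$ is measured in $X^{-1,1}_\text{BO}(T)$. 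Otherwise one uses the bilinear and $L^4$ Strichartz bounds \eqref{2-9}, which produce the factor $\|v\|_{L^4_{T,x}}\|w\|_{X^{s,\frac12}_\text{BO}}$. The $\tilde Z$ part is handled by Cauchy--Schwarz in $\tau$ at the cost of an $\epsilon$ of modulation. The time restriction $\chi_{[0,T]}$ is absorbed through the standard fact that multiplication by $\chi_{[0,T]}$ is bounded on $X^{0,b}$ for $|b|<\frac12$, together with \eqref{2-8}; this is where $T^\kappa$ appears.

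\textbf{Step 2 (term with $P_{\text{lo}}e^{-\frac{\nu}{2}iF}$).} Here the low-frequency factor is harmless. Since $\partial_xP_{+\text{hi}}$ applied to a product of a low-frequency function and $P_-\partial_xv$ forces $v$ to sit at frequency $\lesssim 1$, the whole term is essentially $\partial_x$ of a low-frequency product. I would bound it in $X^{s,-\frac12}\cap\tilde Z^{s,-1}$ by its $L^{4/3}$-type norm, using Bernstein in $x$ and the dual $L^4$ Strichartz estimate. This gives $\|v\|_{L^4_{T,x}}^2$ up to $T$-factors; $\|P_{\text{lo}}e^{-\frac{\nu}{2}iF}\|_{L^\infty}\le1$ keeps the estimate independent of $F$.

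\textbf{Step 3 ($\mathscr{Q}_\delta$ term).} By \eqref{qdelta}, $\mathscr{Q}_\delta$ is smoothing of arbitrary order, so $\mathscr{Q}_\delta v\in L^\infty_TH^{1}_x$ with norm $\lesssim_\delta\|v\|_{L^\infty_TL^2_x}$. I would then:
\begin{itemize}
\item bound the $Y^{s,-\frac12}$ norm of $\chi_{[0,T]}\partial_xP_{+\text{hi}}(e^{-\frac{\nu}{2}iF}\mathscr{Q}_\delta v)$ by its $L^2_TH^{s-1+}_x$-type norm, giving a factor $T^{\kappa}$;
\item apply the fractional exponential estimate \eqref{fracexp};
\item following Lemma 3.5 in \cite{chapouto}, split $e^{-\frac{\nu}{2}iF}$ as $W+P_{\text{lo}}(\cdot)+P_{-\text{hi}}(\cdot)$ when high frequencies of the exponential interact, so that the bound is expressed as $T^\kappa\|v\|_{L^\infty_TL^2_x}\|w\|_{X^{s,\frac12}_\text{BO}(T)}$ plus terms already of the form $\|v\|_{L^4}^2$.
\end{itemize}

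\textbf{Main obstacle.} I expect the main obstacle to be Step 1, specifically the $\tilde Z^{s,-1}$ component and the high--high$\to$high interactions near $s=\frac12$. Since these are exactly the computations in Proposition 4.1 of \cite{molinet-pilod}, I would first try to quote them verbatim after verifying the two structural properties of $F$ above. Only if that fails would I redo the dyadic case analysis.
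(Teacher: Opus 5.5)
Your overall route (reduce to \cite{molinet-pilod} and \cite{chapouto} after checking that $F$ is real and $\partial_xF=v$) is what the paper does; its proof is a one-line reference to Proposition 3.5 of \cite{chapouto}. However, Step 3 claims something that cannot hold, and this exposes a defect in \eqref{bilcha} itself.

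\emph{Step 3.} The $\mathscr{Q}_\delta$ term is linear in $v$ to leading order. In your splitting, the piece $\partial_xP_{+\text{hi}}\big(P_{\text{lo}}e^{-\frac{\nu}{2}iF}\,\mathscr{Q}_\delta v\big)$ carries the factor $P_{\text{lo}}e^{-\frac{\nu}{2}iF}$, which is $O(1)$ (it is $\approx 1$ when $F\approx 0$). Since $\mathscr{Q}_\delta v$ has no $P_-$ restriction, no frequency mechanism produces a second small factor. A concrete counterexample:
\begin{itemize}
\item Take $v=\epsilon\chi(t)\phi(x)$, with $\phi$ real Schwartz and $\widehat{\phi}$ supported in $\{3\le|\xi|\le 4\}$, and $u=0$ in \eqref{G-function}. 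Then $F=O(\epsilon)$ on $[0,T]$.
\item The left side of \eqref{bilcha} equals $\epsilon\|\chi_{[0,T]}\partial_xP_{+\text{hi}}\mathscr{Q}_\delta(\chi\phi)\|_{Y^{s,-\frac12}}+O(\epsilon^2)$. This is $\gtrsim\epsilon$, because the symbol of $\mathscr{Q}_\delta$, namely $\xi\coth(\delta\xi)-|\xi|=2|\xi|/(e^{2\delta|\xi|}-1)$, never vanishes.
\item Every term on the right of \eqref{bilcha} is $O(\epsilon^2)$, since $w=O(\epsilon)$ as well.
\end{itemize}
So no argument yields the bound in your third bullet, and \eqref{bilcha} as printed is false. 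Your first two bullets actually give a contribution of the form $T^{\frac12}C_\delta\big(1+\|v\|_{L^\infty_TL^2_x}\big)\big(1+\|v\|_{L^\infty_TH^s_x}\big)\|v\|_{L^\infty_TL^2_x}$, which has a genuinely linear part. Compare the paper's own linear bound \eqref{V} for the analogous difference term. This term must be added to the right-hand side. It is harmless for the later bootstrap: it carries $T^{\frac12}$ and is absorbed like the $T\delta^{-1}N^s_T$ terms. Quoting \cite{chapouto} verbatim will therefore not give \eqref{bilcha} as written.

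\emph{Step 2.} There is a similar, repairable gap here. Using only $\|P_{\text{lo}}e^{-\frac{\nu}{2}iF}\|_{L^\infty}\lesssim 1$ leaves a single factor of $v$. On $\mathbb{R}$, an $L^{4/3}_x$ norm of the low-frequency part of $v$ is not controlled by $\|v\|_{L^2_x}$ or $\|v\|_{L^4_x}$. Working in $L^2$ instead gives the linear quantity $T^{\frac12}\|v\|_{L^\infty_TL^2_x}$, which is not on the right side. The missing idea is the frequency structure:
\begin{itemize}
\item On the support of $P_{+\text{hi}}$ one has $\xi\ge 1$, and $\xi_2<0$. Hence $\xi_1=\xi-\xi_2>\max(1,|\xi_2|)$.
\item So only the part of $P_{\text{lo}}e^{-\frac{\nu}{2}iF}$ at frequencies in $(1,2]$ contributes, and the derivative can be moved onto it.
\item Then $\partial_xP_{\text{lo}}e^{-\frac{\nu}{2}iF}=-\frac{\nu}{2}iP_{\text{lo}}\big(e^{-\frac{\nu}{2}iF}v\big)$ supplies the second factor of $v$.
\end{itemize}
This is \eqref{FLR'} with $\alpha=0$, $\alpha_1=1$, $\alpha_2=0$, $q_1=q_2=4$. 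It yields $\|v\|_{L^4_{T,x}}^2$, exactly as in the paper's treatment of the term $III$ in the difference estimates.
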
 
\begin{proof} [Proof]
    See the proof of Proposition 3.5 in \cite{chapouto}.
\end{proof} Now we aim to work with the fourth term of $\mathscr{N}_\delta(u,v,w)$. We start by recalling an useful estimate. \\
\begin{lemma}
\label{dif}Let $0\leq\ell_1 \leq \ell_2$ such that $\ell_1+\ell_2 > 1$. Then, for all $a,b\in\mathbb{R}$, 
\begin{equation}
    \int_{\mathbb{R}}\frac{dx}{\langle x-a\rangle^{\ell_1}\langle x-b\rangle^{\ell_2}}\lesssim\frac{1}{\langle a-b\rangle^{\ell}},
\end{equation}
with constant independent from $a$ and $b$, where $\ell=\ell_1$ if $\ell_2>1$, $\ell=\ell_1-\varepsilon$ if $\ell_2=1$, where $\varepsilon>0$ is any small number, and $\ell=\ell_1+\ell_2-1$ if $\ell_2<1$. 
\end{lemma}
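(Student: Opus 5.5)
We prove Lemma \ref{dif} by splitting $\mathbb{R}$ according to which of the two points $a,b$ the variable $x$ is closer to. By translation we may assume $b=0$ and write $d:=|a|$, so that $\langle a-b\rangle=\langle d\rangle$. If $d\le 2$, the claim amounts to the integral being bounded uniformly in $a$. This holds because $\langle x-a\rangle\sim\langle x\rangle$ when $|a|\le2$, and $\int\langle x\rangle^{-\ell_1-\ell_2}\,dx<\infty$ since $\ell_1+\ell_2>1$. So assume $d\ge2$ and split $\mathbb{R}=A_1\cup A_2\cup A_3$ with
\[
A_1=\{|x-a|\le d/2\},\qquad A_2=\{|x|\le d/2\},\qquad A_3=\mathbb{R}\setminus(A_1\cup A_2).
\]

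\textbf{Region $A_1$.} Here $|x|\sim d$, so $\langle x\rangle^{-\ell_2}\sim\langle d\rangle^{-\ell_2}$. It remains to bound $\int_{|y|\le d/2}\langle y\rangle^{-\ell_1}\,dy$, which is $\lesssim \langle d\rangle^{1-\ell_1}$ if $\ell_1<1$, $\lesssim\log\langle d\rangle$ if $\ell_1=1$, and $\lesssim1$ if $\ell_1>1$. This contributes $\langle d\rangle^{-\ell_2}$ times the corresponding factor.

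\textbf{Region $A_2$.} Symmetrically, this contributes $\langle d\rangle^{-\ell_1}$ times $\langle d\rangle^{1-\ell_2}$, $\log\langle d\rangle$, or $1$, according to whether $\ell_2<1$, $\ell_2=1$, or $\ell_2>1$.

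\textbf{Region $A_3$.} Here $|x-a|\ge d/2$ and $|x|\ge d/2$. Moreover $|x-a|\le|x|+d\le3|x|$ and likewise $|x|\le 3|x-a|$, so $\langle x\rangle\sim\langle x-a\rangle$ on $A_3$. The contribution is therefore bounded by
\[
\int_{|x|\ge d/2}\langle x\rangle^{-\ell_1-\ell_2}\,dx\lesssim\langle d\rangle^{1-\ell_1-\ell_2},
\]
using $\ell_1+\ell_2>1$.

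Finally we collect exponents, using $\ell_1\le\ell_2$ throughout.

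\emph{Case $\ell_2>1$.} $A_2$ gives $\langle d\rangle^{-\ell_1}$. $A_3$ gives $\langle d\rangle^{1-\ell_1-\ell_2}\le\langle d\rangle^{-\ell_1}$. For $A_1$, if $\ell_1<1$ we get $\langle d\rangle^{1-\ell_1-\ell_2}\le\langle d\rangle^{-\ell_1}$; if $\ell_1\ge1$ we get at most $\langle d\rangle^{-\ell_2}\log\langle d\rangle\lesssim\langle d\rangle^{-\ell_1}$, since $\ell_2>1$ forces either $\ell_2>\ell_1$ or $\ell_2=\ell_1>1$, in which case the $\log$ is absent. Hence $\ell=\ell_1$.

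\emph{Case $\ell_2=1$.} $A_2$ gives $\langle d\rangle^{-\ell_1}\log\langle d\rangle\lesssim\langle d\rangle^{-\ell_1+\varepsilon}$. For $A_1$, if $\ell_1<1$ we get $\langle d\rangle^{-\ell_1}$; if $\ell_1=1$ we get $\langle d\rangle^{-1}\log\langle d\rangle$. Either way the bound is $\ell=\ell_1-\varepsilon$.

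\emph{Case $\ell_2<1$.} Then also $\ell_1<1$, and all three regions give $\langle d\rangle^{1-\ell_1-\ell_2}$, so $\ell=\ell_1+\ell_2-1$.

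The only delicate point is the logarithm arising in the borderline exponent; it is absorbed by an $\varepsilon$ loss exactly when $\ell_2=1$. In the case $\ell_1=\ell_2>1$ no logarithm appears. The remaining steps are routine bookkeeping.
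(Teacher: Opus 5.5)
Your proof is correct. The paper does not prove this lemma at all; it only refers to Lemma 4.2 of \cite{ginibre}. Your argument is therefore a self-contained replacement for a citation rather than a competing method.

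The splitting into $|x-a|\le d/2$, $|x|\le d/2$ and the far region is the standard elementary route. What it gives you over the bare citation is a view of where each case of the trichotomy comes from. The region near $b$ contributes
\[
\langle d\rangle^{-\ell_1}\int_{|x|\le d/2}\langle x\rangle^{-\ell_2}\,dx ,
\]
which equals $\langle d\rangle^{-\ell_1}$, $\langle d\rangle^{-\ell_1}\log\langle d\rangle$ or $\langle d\rangle^{1-\ell_1-\ell_2}$ according as $\ell_2>1$, $\ell_2=1$ or $\ell_2<1$. This single term produces exactly $\ell=\ell_1$, the $\varepsilon$-loss, and $\ell=\ell_1+\ell_2-1$, and the other two regions never do worse.

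The only bookkeeping you left implicit is the contribution of $A_3$ in the case $\ell_2=1$. It equals $\langle d\rangle^{1-\ell_1-\ell_2}=\langle d\rangle^{-\ell_1}$, so the conclusion $\ell=\ell_1-\varepsilon$ stands.
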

\begin{proof} [Proof]
    See Lemma 4.2 of \cite{ginibre}.
\end{proof}
\begin{lemma}
    For all $s\geq0$ and $b>\frac{1}{2}$,
    \begin{equation}
        \label{duplafg} \left\|\partial_x(f\overline{g})\right\|_{L^2_{t}H^s_x}\lesssim \|f\|_{X^{s+\frac{1}{2},b}_\text{S}}\|g\|_{X^{\frac{1}{2},b}_\text{S}}+\|f\|_{X^{\frac{1}{2},b}_\text{S}}\|g\|_{X^{s+\frac{1}{2}}_\text{S}}.
    \end{equation}
\end{lemma}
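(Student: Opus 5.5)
We will prove \eqref{duplafg} by duality in Fourier variables, reducing it to a weighted convolution estimate that is closed with Cauchy--Schwarz and Lemma \ref{dif}. (The last norm on the right-hand side of \eqref{duplafg} is read as $X^{s+\frac{1}{2},b}_\text{S}$.)

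\emph{Reduction.} Write $\sigma(\tau,\xi)=\tau+\xi^2-\nu\delta^{-1}\xi$. Then $\widehat{\overline{g}}(\tau,\xi)=\overline{\widehat g(-\tau,-\xi)}$, and its modulation weight is $\langle \tau-\xi^2-\nu\delta^{-1}\xi\rangle$. Set
\[
F_1=\langle\sigma\rangle^b\langle\xi\rangle^{s+\frac12}|\widehat f|,\qquad
G_1=\langle\sigma\rangle^b\langle\xi\rangle^{\frac12}|\widehat g| .
\]
The quantity to estimate is
\[
\Big\|\langle\xi\rangle^s|\xi|\int \widehat f(\tau_1,\xi_1)\,\widehat{\overline g}(\tau-\tau_1,\xi-\xi_1)\,d\tau_1 d\xi_1\Big\|_{L^2_{\tau,\xi}}.
\]

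\emph{Distributing the derivatives.} Since $\langle\xi\rangle^s\lesssim\langle\xi_1\rangle^s+\langle\xi-\xi_1\rangle^s$ for $s\ge0$, it suffices to treat the case where $\langle\xi\rangle^s$ falls on $f$; the other case is symmetric and produces the second term of \eqref{duplafg}. The factor $|\xi|$ is not split. Writing $\xi_2=\xi-\xi_1$, we will aim to prove
\[
\sup_{\tau,\xi}\ \frac{|\xi|^2}{\langle\xi\rangle^0}\int \frac{d\tau_1 d\xi_1}{\langle\xi_1\rangle\langle\xi_2\rangle\langle\sigma_1\rangle^{2b}\langle\sigma_2\rangle^{2b}}<\infty ,
\]
where $\sigma_1,\sigma_2$ are the modulations of the two factors. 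By Cauchy--Schwarz in $(\tau_1,\xi_1)$, followed by Fubini, this bound gives the estimate $\lesssim\|F_1\|_{L^2}\|G_1\|_{L^2}$.

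\emph{Time integration.} Integrating in $\tau_1$ with Lemma \ref{dif} (using $2b>1$) gives a factor $\langle\tau-\phi(\xi,\xi_1)\rangle^{-2b}$, where
\[
\phi=-\xi_1^2+\xi_2^2+\nu\delta^{-1}\xi=\xi(\xi_2-\xi_1)+\nu\delta^{-1}\xi .
\]
The phase is \emph{linear} in $\xi_1$: $\phi=\xi(\xi-2\xi_1)+\nu\delta^{-1}\xi$, so $\partial_{\xi_1}\phi=-2\xi$. Substituting $\mu=\phi$ gives $d\xi_1=d\mu/(2|\xi|)$. The remaining integral is bounded by
\[
\frac{|\xi|^2}{|\xi|}\,\sup_{\xi_1}\frac{1}{\langle\xi_1\rangle\langle\xi_2\rangle}\int\frac{d\mu}{\langle\tau-\mu\rangle^{2b}}
\lesssim \frac{|\xi|}{\langle\xi_1\rangle\langle\xi_2\rangle}.
\]
This is bounded because $|\xi|\le|\xi_1|+|\xi_2|$ and $\langle\xi_1\rangle\langle\xi_2\rangle\gtrsim\langle\xi_1\rangle+\langle\xi_2\rangle$.

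\emph{Main obstacle.} The supremum above still depends on $\xi_1$. Handling it correctly requires carrying the weight inside the $\mu$-integral rather than pulling out a crude sup. Since the bound $|\xi|/(\langle\xi_1\rangle\langle\xi_2\rangle)\lesssim1$ holds uniformly, the sup is harmless.

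\emph{Small frequencies.} When $|\xi|\lesssim1$, the change of variables degenerates. In that regime we bound directly: the factor $|\xi|^2$ is bounded, and we integrate $\langle\xi_1\rangle^{-1}\langle\xi_2\rangle^{-1}$ in $\xi_1$. This integral is $\lesssim\langle\xi\rangle^{-1+}$, which is bounded by Lemma \ref{dif}.

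Combining the two regions and the two cases of the derivative distribution yields \eqref{duplafg}. This argument uses only the dispersion of the Schr\"odinger factors; $\nu$ enters only through the harmless linear term.
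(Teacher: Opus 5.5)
Your proposal is correct and follows the paper's argument. Cauchy--Schwarz reduces the estimate to the supremum of the same weighted integral $S^2$; Lemma \ref{dif} disposes of $\tau_1$; the substitution $\mu=\phi$, which is linear in $\xi_1$ with Jacobian $2|\xi|$, handles $\xi_1$; and the case $s>0$ follows from $\langle\xi\rangle^s\lesssim\langle\xi_1\rangle^s+\langle\xi_2\rangle^s$. Your explicit use of $|\xi|\lesssim\langle\xi_1\rangle\langle\xi_2\rangle$ to absorb the leftover factor $|\xi|$ is exactly the step the paper's display glosses over: it discards those weights and then writes $|\xi|^{1/2}$ where $|\xi|$ should stand. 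Your separate small-frequency case is harmless but unnecessary, since $|\xi|^2\,(2|\xi|)^{-1}\langle\xi\rangle^{-1}\le 1$ for every $\xi\neq0$.
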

\begin{proof}[Proof]
   First, consider the case $s=0$. Define 
    \begin{equation*}
                F(\tau,\xi):= \left\langle\tau+\xi^2-\nu\delta^{-1}\xi\right\rangle^b\langle\xi\rangle^\frac{1}{2}\widehat{f}(\tau,\xi),
    \end{equation*}
    and,
    \begin{equation*}
        G(\tau,\xi):= \left\langle\tau-\xi^2-\nu\delta^{-1}\xi\right\rangle^b\langle\xi\rangle^\frac{1}{2}\widehat{g}(\tau,\xi).
    \end{equation*}
    Then, using Cauchy-Schwarz inequality and Fubini theorem, we get, 
    \begin{equation*}
        \|\partial_x(fg)\|_{L^2_{t,x}}
        =\bigg\|i\xi\bigintssss_{\mathbb{R}^2}\frac{F(\tau_1,\xi_1)}{\left\langle\sigma_1\right\rangle^b\langle\xi_1\rangle^\frac{1}{2}}\frac{G(\tau_2,\xi_2)}{\left\langle\sigma_2\right\rangle^b\langle\xi_2\rangle^\frac{1}{2}}\,d\tau_1d\xi_1\bigg\|_{L^2_{\tau,\xi}}
        \leq S\, \|f\|_{X^{\frac{1}{2},b}_\text{S}}\|g\|_{X^{\frac{1}{2},b}_\text{S}},
    \end{equation*}
    where  $\tau_2:=\tau-\tau_1$, $\xi_2:=\xi-\xi_1$, $\sigma_1:=\tau_1+\xi_1^2-\nu\delta^{-1}\xi_1$, $\sigma_2:=\tau_2-\xi_2^2-\nu\delta^{-1}\xi_2$, and 
    \begin{equation*}
        S:= \Bigg\|\xi\bigg(\int_{\mathbb{R}^2}\frac{d\tau_2\,d\tau_1}{\langle\xi_1\rangle\left\langle\sigma_1\right\rangle^{2b}\langle\xi_2\rangle\left\langle\sigma_2\right\rangle^{2b}}\bigg)^{1/2}\Bigg\|_{L^\infty_{\tau,\xi}}.
    \end{equation*}
    We aim to show that $S\lesssim 1$. Indeed, by Lemma \ref{dif}, we obtain 
        \begin{equation*}
        S \lesssim \Bigg\||\xi|\bigg(\int_\mathbb{R}\frac{d\xi_1}{\left\langle2\xi\xi_1+\tau-\xi^2-\nu\delta^{-1}\xi\right\rangle^{2b}}\bigg)^{1/2}\Bigg\|_{L^\infty_{\tau,\xi}}= \Bigg\||\xi|^\frac{1}{2}\bigg(\int_\mathbb{R}\frac{1}{\langle\mu\rangle^{2b}}\frac{d\mu}{2|\xi|}\bigg)^{1/2}\Bigg\|_{L^\infty_{\tau,\xi}} \lesssim 1. 
    \end{equation*}    
It proves \eqref{normacuadradou}, when $s=0$. For the general case $s\geq0$, note that 
\begin{align*}
    &\left\|\partial_x\left(f\overline{g}\right)\right\|_{L^2_tH^s_x}\\\lesssim &\bigg\||\xi|\int_{\xi=\xi_1+\xi_2}\big|\langle\xi_1\rangle^{s}\widehat{f}(\xi_1)\big||\widehat{g}(\xi_2)|\,d\xi_1\bigg\|_{L^2_{\tau,\xi}}+ \bigg\||\xi|\int_{\xi=\xi_1+\xi_2}\big|\widehat{f}(\xi_1)\big|\big|\langle\xi_2\rangle^{s}\widehat{g}(\xi_2)\big|\,d\xi_1\bigg\|_{L^2_{\tau,\xi}}\\
    = &\big\|\partial_x\big(\big|\widehat{J^s_xf}\big|^\vee|\widehat{g}|^\vee\big)\big\|_{L^2_{t,x}}+ \big\|\partial_x(\big|\widehat{f}\big|^\vee \big|\widehat{J^s_xg}\big|^\vee)\big\|_{L^2_{t,x}}\\
    \lesssim & \big\||\widehat{J^s_xf}\big|^\vee\big\|_{X^{{1/2},b}_\text{S}}\big\||\widehat{g}|^\vee\big\|_{X^{{1/2},b}_\text{S}}+ \big\||\widehat{f}|^\vee\big\|_{X^{{{1/2}},b}_\text{S}}\big\|\big|\widehat{J^s_xg}\big|^\vee\big\|_{X^{{1/2},b}_\text{S}}\\
    =& \|f\|_{X^{s+1/2,b}_\text{S}}\|g\|_{X^{{1/2},b}_\text{S}}+\|f\|_{X^{{1/2},b}_\text{S}}\|g\|_{X^{s+{1/2}}_\text{S}}.
\end{align*}
\end{proof}
\begin{proposition} Let $0\leq s \leq \frac{1}{2}$. For all $b>\frac{1}{2}$,
        \begin{multline}
        \label{normacuadradou} \left\|\chi_{[0,T]}(t)\partial_x P_{+\text{hi}}\left(e^{-\frac{\nu}{2}iF}|u|^2\right)\right\|_{Y^{s,-\frac{1}{2}}} \\\lesssim \|u\|_{X^{\frac{1}{2},b}_{\text{S}}(T)}\big( \|v\|_{L^4_T W^{s,4}_x}+\big(\|v\|_{L^4_{T,x}}+1\big)\|u\|_{X^{s+\,{1/2},b}_{\text{S}}(T)}\big).
    \end{multline}
\end{proposition}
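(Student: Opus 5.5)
The plan is to estimate the $Y^{s,-\frac{1}{2}}$ norm crudely by an $L^2_tH^s_x$-type norm, which is enough since the weights are negative. The $X^{s,-\frac{1}{2}}_{\text{BO}}$ part satisfies $\|\cdot\|_{X^{s,-\frac12}_{\text{BO}}}\le\|\cdot\|_{L^2_tH^s_x}$. For the $\tilde Z^{s,-1}$ part I would use Cauchy--Schwarz in $\tau$ on each dyadic block:
\[
\|P_N f\|_{Z^{s,-1}}\lesssim \|\langle\tau-\nu\xi|\xi|\rangle^{-1}\|_{L^2_\tau}\,\|P_Nf\|_{X^{s,0}_{\text{BO}}}\lesssim\|P_Nf\|_{L^2_tH^s_x},
\]
and then sum the squares in $N$. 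This reduces \eqref{normacuadradou} to a bound on
\[
\big\|\chi_{[0,T]}\,\partial_xP_{+\text{hi}}\big(e^{-\frac{\nu}{2}iF}|u|^2\big)\big\|_{L^2_tH^s_x}\lesssim\big\|\partial_x\big(e^{-\frac{\nu}{2}iF}|u|^2\big)\big\|_{L^2_TH^s_x},
\]
where the operator $P_{+\text{hi}}$ is dropped because it is bounded on $H^s$. Since $\partial_xP_{+\text{hi}}$ kills low frequencies, I can also replace $J^s_x$ by $D^s_x$ up to harmless terms.

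Next I would distribute the derivative:
\[
\partial_x\big(e^{-\frac{\nu}{2}iF}|u|^2\big)=-\tfrac{\nu}{2}i\,e^{-\frac{\nu}{2}iF}\,v\,|u|^2+e^{-\frac{\nu}{2}iF}\,\partial_x(|u|^2).
\]

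For the second piece I would take $H^s$ norms, $0\le s\le\frac12$, and use \eqref{fracexp} with $q=2$. That lemma requires $\alpha\le\frac1q=\frac12$, which is exactly why $s\le\frac12$ appears in the hypothesis. It gives a factor $(1+\|v\|_{L^\infty_TL^2_x})$ times $\|\partial_x(u\bar u)\|_{L^2_TH^s_x}$, and the latter is controlled by \eqref{duplafg} with $f=g=u$ by $\|u\|_{X^{\frac12,b}_{\text{S}}(T)}\|u\|_{X^{s+\frac12,b}_{\text{S}}(T)}$ (after extending $u$ off $[0,T]$). The factor $\|v\|_{L^\infty_TL^2_x}$ does not appear in the target bound, so I would avoid it by a direct Kato--Ponce split. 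Writing $D^s(e^{-\frac\nu2 iF}h)$ with $h=\partial_x|u|^2$, the leading term is $e^{-\frac\nu2 iF}D^sh$, which gives the bound above. The commutator is handled by Lemma \ref{LiLeibniz} with $0<s\le1$:
\[
\|[D^s;e^{-\frac\nu2 iF}]h\|_{L^2}\lesssim\|D^{s-1}\partial_x e^{-\frac\nu2 iF}\|_{L^{4}}\,\|h\|_{L^4}.
\]
Here $\partial_xe^{-\frac\nu2 iF}=-\frac\nu2 i\,ve^{-\frac\nu2 iF}$ is in $L^4$, controlled by $\|v\|_{L^4_{T,x}}$, and $\|h\|_{L^4}$ would be bounded via Sobolev by $u$ norms. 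This is the source of the $\|v\|_{L^4_{T,x}}\|u\|_{X^{s+\frac12,b}}$ term.

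For the first piece, $e^{-\frac{\nu}{2}iF}v|u|^2$ in $L^2_TH^s_x$, I would put $D^s$ on each factor by the fractional Leibniz rule. The term with $D^s$ on $v$ is estimated by $\|J^sv\|_{L^4_{T,x}}\,\||u|^2\|_{L^4_TL^4_x}$. Here $\|u\|_{L^8_x}^2\lesssim\|u\|_{H^{1/2}}^2$ is bounded uniformly in $t$ by $\|u\|^2_{X^{\frac12,b}_{\text{S}}}$ (using $b>\frac12$, $T\le1$). The term with $D^s$ on $|u|^2$ is estimated by $\|v\|_{L^4}\|J^s|u|^2\|_{L^4}$, and Sobolev gives $\|J^s|u|^2\|_{L^4}\lesssim\|u\|_{H^{1/2}}\|u\|_{H^{s+1/2}}$ up to endpoint care. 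The derivative falling on the exponential brings out one more factor of $v$, which is handled by \eqref{fracexp}-type bounds.

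The main obstacle is matching exactly the structure of the right-hand side of \eqref{normacuadradou}, which is linear in $u$ norms. My sketch naturally produces $\|u\|^2$ factors in the $v|u|^2$ term, so the statement as written probably absorbs one $\|u\|_{X^{\frac12,b}_{\text{S}}}$ into an implicit constant or a bootstrap bound. I would first try the above splitting and see whether the $L^4$/Sobolev endpoint exponents close with $b>\frac12$. At $s=\frac12$ some embeddings are borderline, and I would fall back on $\varepsilon$-losses absorbed by the $b>\frac12$ slack or on \eqref{BouSt}.
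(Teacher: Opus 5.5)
Your main line is the paper's proof. You reduce the $Y^{s,-\frac12}$ norm to $L^2_TH^s_x$; your Cauchy--Schwarz in $\tau$ on dyadic blocks is a cleaner substitute for the paper's passage through $X^{s,-\frac12+\epsilon}_{\text{BO}}$. You then split off $e^{-\frac{\nu}{2}iF}v|u|^2$ and $e^{-\frac{\nu}{2}iF}\partial_x(|u|^2)$, remove the exponential with \eqref{fracexp} at $q=2$, bound $\partial_x(|u|^2)$ by \eqref{duplafg}, and bound $v|u|^2$ by the fractional Leibniz rule with $H^{3/8}\hookrightarrow L^8$.

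The mismatch you noticed is real, and the paper does not resolve it either. Its argument ends with
\[
\big(1+\|v\|_{L^\infty_TL^2_x}\big)\Big(T^{\frac14}\|v\|_{L^4_TW^{s,4}_x}\|u\|^2_{X^{\frac12,b}_{\text{S}}(T)}+\big(T^{\frac14}\|v\|_{L^4_{T,x}}+1\big)\|u\|_{X^{\frac12,b}_{\text{S}}(T)}\|u\|_{X^{s+\frac12,b}_{\text{S}}(T)}\Big).
\]
The paper writes $\|v\|_{L^2_{T,x}}$ here, but \eqref{fracexp} needs $\|v(t)\|_{L^2}$ pointwise in time, so $L^\infty_TL^2_x$ is correct. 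This weaker form, with the extra factors, is what is actually used later, through \eqref{Ast2} as $N^0_T(1+N^0_T)N^s_T$. So prove and state that version rather than trying to reach the displayed right-hand side.

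The step that genuinely fails is your Kato--Ponce detour to remove the factor $\|v\|_{L^\infty_TL^2_x}$. There are two separate problems.

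First, for $s<\frac12$, $h=\partial_x(|u|^2)$ lies at fixed time only in $H^{s-\frac12}$, which has negative order. No Sobolev embedding puts it in $L^4_x$. Its only available control is the space-time smoothing estimate \eqref{duplafg}, which is an $L^2_tH^s_x$ bound and cannot feed an $L^4\times L^4$ commutator estimate.

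Second, $D^{s-1}\partial_x=-\mathscr{H}D^s$, so $\|D^{s-1}\partial_x e^{-\frac{\nu}{2}iF}\|_{L^4}$ is really $\|D^s e^{-\frac{\nu}{2}iF}\|_{L^4}$. For this unimodular function, that norm is not controlled by $\|v\|_{L^4}$, since $D^{s-1}$ has negative order and is not bounded on $L^4$. Interpolating between $\|e^{-\frac{\nu}{2}iF}\|_{L^\infty}=1$ and $\|\partial_xe^{-\frac{\nu}{2}iF}\|_{L^2}\lesssim\|v\|_{L^2}$ only returns $\|v\|_{L^2}$-type bounds, and in $L^{2/s}$ rather than $L^4$.

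So drop the detour and keep the factor $(1+\|v\|_{L^\infty_TL^2_x})$, as the paper does.
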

\begin{proof}[Proof] 
    There exists a $0<\epsilon<\frac{1}{2}$ such that 
    \begin{equation}
         \label{epi}\left\|\chi_{[0,T]}(t)\partial_x P_{+\text{hi}}\left(e^{-\frac{\nu}{2}iF}|u|^2\right)\right\|_{Y^{s,-\frac{1}{2}}} \lesssim \left\|\partial_x P_{+\text{hi}}\left(e^{-\frac{\nu}{2}iF}|u|^2\right)\right\|_{X^{s,-\frac{1}{2}+\epsilon}_{\text{BO}}(T)}. 
    \end{equation}
    Differentiating and using inequality \eqref{fracexp}, 
    \begin{equation}
         \label{ul}\begin{split}
            &\left\|\partial_x P_{+\text{hi}}\left(e^{-\frac{\nu}{2}iF}|u|^2\right)\right\|_{X^{s,-\frac{1}{2}+\epsilon}_{\text{BO}}(T)}\\
        \lesssim  & \left\|P_{+\text{hi}}\left(e^{-\frac{\nu}{2}iF}\,v|u|^2\right)\right\|_{L^2_T H^s_x} + \left\| P_{+\text{hi}}\left(e^{-\frac{\nu}{2}iF}\partial_x\left(|u|^2\right)\right)\right\|_{L^2_T H^s_x}\\
        \lesssim  & \big(1+\|v\|_{L^2_{T,x}}\big)\big(\big\|v|u|^2\big\|_{L^2_T H^s_x} + \big\| \partial_x\big(|u|^2\big)\big\|_{L^2_T H^s_x}\big).
        \end{split}
    \end{equation}
On the other hand, from the estimate \eqref{normacuadradou} we get 
    \begin{equation}
      \label{ul2}  \left\| \partial_x\left(|u|^2\right)\right\|_{L^2_T H^s_x} \lesssim \left\|u\right\|_{X^{\frac{1}{2},b}_\text{S}(T)}\|u\|_{X^{s+\frac{1}{2},b}_\text{S}(T)}. 
    \end{equation}
Assuming at first that $s>0$, we use the fractional Leibniz rule, Cauchy-Schwarz, and Sobolev and Bourgain embedding to get, 
\begin{equation*}
\label{termocade1}
    \begin{split}
                \|v|u|^2\|_{L^2_{T}H^s_x} = &\bigg(\int_0^T \left\|v|u|^2\right\|_{H^s_x}^2\bigg)^{1/2}\\
        \lesssim & \bigg(\int_0^T\left\|J^s_x v\right\|_{L^4_x}^2\,\left\|u\right\|_{L^8_x}^4\,dt\Big)^{1/2} + \bigg(\int_0^T\left\|v\right\|_{ L^4_x}^2\,\left\|u\right\|_{L^8_x}^2\left\|J^s_x u\right\|_{L^8_x}^2\,dt\bigg)^{1/2}\\
        \leq &\|J^s_x v\|_{L^4_{T,x}}\bigg(\int_0^T\|u\|_{H^{1/2}_x}^8\,dt\bigg)^{1/4}+ \|v\|_{L^4_{T,x}}\bigg(\int_0^T\|u\|_{H^{1/2}_x}^4\|u\|_{H^{s+{1/2}}_x}^4\bigg)^{1/4}\\
        \leq & T^{1/4}\|J^s_x v\|_{L^4_{T,x}}\|u\|_{L^\infty_T H^{1/2}_x}^2+ T^{1/4}\|v\|_{L^4_{T,x}}\|u\|_{L^\infty_T H^{1/2}_x}\|u\|_{L^\infty_T H^{s+{1/2}}_x}\\
        \lesssim & T^{1/4}\|J^s_x v\|_{L^4_{T,x}}\left\|u\right\|_{X^{{1/2},b}_\text{S}(T)}^2+ T^{1/4}\|v\|_{L^4_{T,x}}\|u\|_{X^{{1/2},b}_\text{S}(T)}\|u\|_{X^{s+{1/2},b}_\text{S}(T)}. 
    \end{split}
\end{equation*}
The case $s=0$ can be worked in a similar way.  Combining \eqref{ul} and \eqref{ul2}, we obtain \eqref{epi}. 
\end{proof} Applying estimates \eqref{w_0}, \eqref{bilcha} and \eqref{normacuadradou} in \eqref{doblew}, we get the following result. \\
\begin{proposition}
    For all $0\leq s \leq \frac{1}{2}$ and $b>\frac{1}{2}$, there is any $\kappa>0$ such that 
    \begin{multline}
        \label{3gauge} \|w\|_{Y^{s,\frac{1}{2}}(T)}\lesssim (1+\left\|v_0\right\|_{L^2})\left\|v_0\right\|_{H^s}+\|v\|^2_{L^4_{T,x}} \\
        +\big(T^\kappa\|v\|_{L^\infty_T L^2_x}+\|v\|_{L^4_{T,x}}+\|v\|_{X^{-1,1}_\text{BO}(T)}\big)\|w\|_{X^{s,\frac{1}{2}}_\text{BO}(T)}\\+\|u\|_{X^{\frac{1}{2},b}_{\text{S}}(T)}\big( \|v\|_{L^4_T W^{s,4}_x}+\big(\|v\|_{L^4_{T,x}}+1\big)\|u\|_{X^{s+\frac{1}{2},b}_{\text{S}}(T)}\big).
    \end{multline}
\end{proposition}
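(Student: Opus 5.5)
This estimate follows by assembling the three bounds already established in this section into the Duhamel estimate \eqref{doblew}. First, we apply the linear estimates \eqref{2-5} and \eqref{2-7} to the Duhamel formulation of \eqref{g}, restricted to $[0,T]$ with an extension of $w$. This gives \eqref{doblew}, which bounds $\|w\|_{Y^{s,\frac12}(T)}$ by $\|w(0)\|_{H^s}$ plus $\|\chi_{[0,T]}\mathscr{N}_\delta(u,v,w)\|_{Y^{s,-\frac12}}$.

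For the initial datum, we use \eqref{w_0}. Since $w(0) = -\frac{\nu}{2}iP_{+\text{hi}}(e^{-\frac{\nu}{2}iF(0)}v_0)$, applying \eqref{fracexp} with $q=2$ and $\alpha=s$ (allowed because $s\le \frac12$) gives $(1+\|v_0\|_{L^2})\|v_0\|_{H^s}$, which is the first term of \eqref{3gauge}.

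Next, we split $\mathscr{N}_\delta(u,v,w) = \tilde{\mathscr{N}}_\delta(v,w) - \frac{\nu^2}{2}i\,\partial_xP_{+\text{hi}}(e^{-\frac{\nu}{2}iF}|u|^2)$. Since $\chi_{[0,T]}$ commutes with everything, the triangle inequality in $Y^{s,-\frac12}$ separates the two pieces.
\begin{itemize}
\item For $\tilde{\mathscr{N}}_\delta$, we invoke \eqref{bilcha} directly. It yields $\|v\|_{L^4_{T,x}}^2 + (T^\kappa\|v\|_{L^\infty_TL^2_x}+\|v\|_{L^4_{T,x}}+\|v\|_{X^{-1,1}_{\text{BO}}(T)})\|w\|_{X^{s,\frac12}_{\text{BO}}(T)}$, with the same $\kappa>0$.
\item For the $|u|^2$ piece, we invoke \eqref{normacuadradou}. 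It yields $\|u\|_{X^{\frac12,b}_\text{S}(T)}\big(\|v\|_{L^4_TW^{s,4}_x}+(\|v\|_{L^4_{T,x}}+1)\|u\|_{X^{s+\frac12,b}_\text{S}(T)}\big)$ for every $b>\frac12$. The factor $\nu^2/2$ is absorbed into the implicit constant.
\end{itemize}
Summing these gives \eqref{3gauge}.

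The only delicate point is bookkeeping between global and time-localized norms. Estimate \eqref{doblew} must be read with $w$ replaced by a suitable extension, and the right-hand sides of \eqref{bilcha} and \eqref{normacuadradou} are already expressed in localized norms. We therefore take the infimum over extensions at the end; since all bounds hold for any extension agreeing with $w$ on $[0,T]$, this poses no real obstacle.
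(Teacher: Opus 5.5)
Your proposal is correct and is essentially the paper's own argument. The paper obtains \eqref{3gauge} in one line by inserting \eqref{w_0}, \eqref{bilcha} and \eqref{normacuadradou} into the Duhamel bound \eqref{doblew}, which is exactly what you do after splitting $\mathscr{N}_\delta$ into $\tilde{\mathscr{N}}_\delta$ and the $|u|^2$ term; your justification of \eqref{w_0} via \eqref{fracexp} with $q=2$, $\alpha=s\le\frac12$, and your remark on time-localized norms, just make explicit what the paper leaves implicit.
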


\subsection{Estimate of the cubic term}
We now turn to the first equation of \eqref{Schr-ILW-T}. We need some previous results.  \\
\begin{lemma}
    The group $\{U(t)\}_{t\in\mathbb{R}}$ satisfies the Strichartz estimate 
    \begin{equation}
       \label{Strichartz 2} \left\|U(t)f\right\|_{L^q_t L^p_x}\lesssim \|f\|_{L^2_x},
    \end{equation}
    where $2\leq p \leq \infty$ and $\frac{2}{q}=\frac{1}{2}-\frac{1}{p}$. 
\end{lemma}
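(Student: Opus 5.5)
The plan is to reduce the statement to the classical one-dimensional Schrödinger Strichartz estimate by a modulation and a Galilean change of variables. The symbol of $U(t)$ is $e^{-it(\xi^2-\nu\delta^{-1}\xi)}$. Completing the square gives
\[
\xi^2-\nu\delta^{-1}\xi=\big(\xi-\tfrac{\nu}{2\delta}\big)^2-\tfrac{\nu^2}{4\delta^2}.
\]
Setting $c:=\frac{\nu}{2\delta}$ and $g:=e^{-icx}f$, so that $\hat g(\eta)=\hat f(\eta+c)$, a direct computation on the Fourier side yields
\[
U(t)f(x)=e^{it\frac{\nu^2}{4\delta^2}}\,e^{icx}\,\big(e^{it\partial_x^2}g\big)(x-2ct).
\]
Hence $|U(t)f(x)|=|e^{it\partial_x^2}g(x-2ct)|$.

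For each fixed $t$, the $L^p_x$ norm is invariant under the translation $x\mapsto x-2ct$ and under multiplication by a unimodular factor. Therefore $\|U(t)f\|_{L^p_x}=\|e^{it\partial_x^2}g\|_{L^p_x}$ for every $t$, and taking the $L^q_t$ norm gives
\[
\|U(t)f\|_{L^q_tL^p_x}=\|e^{it\partial_x^2}g\|_{L^q_tL^p_x}.
\]
We then invoke the standard Strichartz estimate for the free Schrödinger group in one dimension with admissible pairs $\frac{2}{q}=\frac12-\frac1p$, $2\le p\le\infty$. This is Ginibre--Velo / Keel--Tao, and the endpoint $(q,p)=(4,\infty)$ is allowed in 1D. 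Combined with Plancherel, $\|g\|_{L^2}=\|f\|_{L^2}$, this closes the argument.

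If one prefers a self-contained route, the alternative is the usual $TT^*$ argument directly for $U(t)$. The kernel of $U(t)$ is an explicit Gaussian-type oscillatory integral, and completing the square again gives the dispersive bound $\|U(t)f\|_{L^\infty}\lesssim|t|^{-1/2}\|f\|_{L^1}$. Interpolating with $L^2$ unitarity then gives $\|U(t)f\|_{L^{p}}\lesssim |t|^{-(\frac12-\frac1p)}\|f\|_{L^{p'}}$. The Hardy--Littlewood--Sobolev inequality in time handles the non-endpoint cases $p<\infty$, and the endpoint $p=\infty$, $q=4$ follows from Keel--Tao or the 1D Kenig--Ponce--Vega argument.

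The only point requiring care is verifying the modulation/Galilean identity, which is a routine completion of the square. The constant in the estimate is thus independent of $\delta$ and $\nu$, and the main substantive input is the cited Strichartz inequality.
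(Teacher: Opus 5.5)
Your reduction to the free Schr\"odinger group is the right idea. It is in line with the paper, whose proof only cites the classical Strichartz estimate (Linares--Ponce, Kenig--Ponce--Vega) and leaves implicit why the drift term $\nu\delta^{-1}\xi$ in the symbol is harmless; your argument supplies that justification.

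However, the identity you display is false. Its right-hand side has Fourier transform $e^{2itc^2}e^{-it\xi^2}\hat f(\xi)$, so it equals $e^{2itc^2}e^{it\partial_x^2}f$, not $U(t)f$. You applied both the modulation $g=e^{-icx}f$ and the Galilean translation $x\mapsto x-2ct$, which double counts the boost. The correct statements are
\[
U(t)f(x)=e^{itc^2}e^{icx}\big(e^{it\partial_x^2}g\big)(x)=\big(e^{it\partial_x^2}f\big)(x+2ct),\qquad c=\tfrac{\nu}{2\delta}.
\]
The second form simply says that the linear part $e^{it\nu\delta^{-1}\xi}$ of the symbol is a translation in $x$. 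This is just the Galilean change of variables that produced the transformed system, so no modulation is needed at all.

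The spurious translation does not change $\|\cdot\|_{L^p_x}$ at fixed $t$. Hence the equality $\|U(t)f\|_{L^q_tL^p_x}=\|e^{it\partial_x^2}g\|_{L^q_tL^p_x}$, which is what you actually use, is still true; it also equals $\|e^{it\partial_x^2}f\|_{L^q_tL^p_x}$. The proof therefore goes through once the formula is corrected, with a constant independent of $\nu$ and $\delta$ as you say.

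A minor point on your alternative route: in one dimension the pair $(q,p)=(4,\infty)$ is not a Keel--Tao endpoint. The $TT^*$ argument, with the dispersive bound and Hardy--Littlewood--Sobolev in time from $L^{4/3}_t$ to $L^4_t$, covers it directly. The only case outside HLS is $(q,p)=(\infty,2)$, which is immediate from unitarity.
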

\begin{proof} [Proof]
See the proof of Lemma 4.1 in \cite{linares2015introduction}. See also \cite{kenig1991oscillatory}. 
\end{proof}

\begin{proposition} For all $s\geq0$ and $b>\frac{1}{2}$, we get 
    \begin{multline}
    \label{trilineal}\left\|fgh\right\|_{L^2_{t}H^s_x}\lesssim \|f\|_{X^{s,b}_\text{S}}\,\|g\|_{X^{0,b}_\text{S}}\,\|h\|_{X^{0,b}_\text{S}}\\+\|f\|_{X^{0,b}_\text{S}}\,\|g\|_{X^{s,b}_\text{S}}\,\|h\|_{X^{0,b}_\text{S}}+\|f\|_{X^{0,b}_\text{S}}\,\|g\|_{X^{0,b}_\text{S}}\,\|h\|_{X^{s,b}_\text{S}}.
    \end{multline}
\end{proposition}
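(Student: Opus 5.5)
The plan is to reduce the trilinear estimate to a product of three $L^6_tL^6_x$ norms, using the Strichartz estimate \eqref{Strichartz 2} transferred to Bourgain spaces. Take $p=6$ in \eqref{Strichartz 2}. Then $\frac{2}{q}=\frac12-\frac16=\frac13$, so $q=6$ and $\|U(t)f\|_{L^6_{t,x}}\lesssim\|f\|_{L^2}$. By the standard transference principle, writing $F$ as a superposition $F=\int e^{it\lambda}U(t)f_\lambda\,d\lambda$ of modulated free solutions and using Cauchy--Schwarz in $\lambda$ (which needs $b>\frac12$), this yields
\begin{equation*}
\|F\|_{L^6_{t,x}}\lesssim \|F\|_{X^{0,b}_\text{S}},\qquad b>\tfrac12 .
\end{equation*}
The Galilean drift term $\nu\delta^{-1}\xi$ in the symbol does not affect this, since it only shifts $x$.

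For $s=0$, Hölder's inequality gives $\|fgh\|_{L^2_{t,x}}\le\|f\|_{L^6}\|g\|_{L^6}\|h\|_{L^6}$, and each factor is controlled by its $X^{0,b}_\text{S}$ norm.

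For $s>0$, I would work on the Fourier side, as in the proof of \eqref{duplafg}. Since $\langle\xi\rangle^s\lesssim\langle\xi_1\rangle^s+\langle\xi_2\rangle^s+\langle\xi_3\rangle^s$ when $\xi=\xi_1+\xi_2+\xi_3$, we have
\begin{equation*}
\langle\xi\rangle^s|\widehat{fgh}|\lesssim |\widehat{J^s_xf}|*|\widehat g|*|\widehat h|+|\widehat f|*|\widehat{J^s_xg}|*|\widehat h|+|\widehat f|*|\widehat g|*|\widehat{J^s_xh}| .
\end{equation*}
Each convolution is the Fourier transform of a product of functions $\tilde f=|\widehat{J^s_xf}|^\vee$, $\tilde g=|\widehat g|^\vee$, and so on. Replacing a function by the inverse transform of the modulus of its Fourier transform leaves the $X^{r,b}_\text{S}$ norms unchanged. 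Applying Plancherel and then the $s=0$ case to each of the three products gives the three terms on the right-hand side of \eqref{trilineal}.

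The only point needing care is the transference step. One must check that complex conjugation does not break it, since later the estimate is applied to $|u|^2u$ with a conjugate factor $\bar u$. Because the statement has no conjugates, this is harmless here. It also helps that the $L^6$ bound holds for conjugates of $X^{0,b}_\text{S}$ functions as well, because $\|\bar F\|_{L^6}=\|F\|_{L^6}$. I expect no genuine obstacle: the argument is Hölder's inequality combined with the $L^6$ Strichartz embedding.
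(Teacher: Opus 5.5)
Your proposal is correct and follows the paper's proof essentially step for step. For $s=0$ you use H\"older into three $L^6_{t,x}$ norms, each bounded by $X^{0,b}_\text{S}$ by transferring the $p=q=6$ Strichartz estimate \eqref{Strichartz 2} with Minkowski and Cauchy--Schwarz in the modulation variable (this is where $b>\frac12$ enters). For $s>0$ you distribute $\langle\xi\rangle^s$ over the three frequencies and apply the $s=0$ case to the functions whose Fourier transforms are the moduli $|\widehat{J^s_xf}|$, $|\widehat g|$, $|\widehat h|$, exactly as the paper does in the last part of the proof of \eqref{duplafg}.
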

\begin{proof}[Proof]
    At first we prove \eqref{trilineal} when $s=0$. Indeed, by a generalized Hölder inequality,
    \begin{equation}
        \label{L6} \left\|fgh\right\|_{L^2_{t,x}}\lesssim \|f\|_{L_{t,x}^6}\,\|g\|_{L_{t,x}^6}\,\|h\|_{L_{t,x}^6}. 
    \end{equation}
    Now, let us define 
    \begin{align*}
        &F(\tau,\xi)= \left\langle \tau +\xi^2-\nu\delta^{-1}\xi\right\rangle^b\widehat{f}(\tau,\xi), \,\,\,\,G(\tau,\xi)= \left\langle \tau +\xi^2-\nu\delta^{-1}\xi\right\rangle^b\widehat{g}(\tau,\xi), \, \, \, \text{and,}\\
        &H(\tau,\xi)= \left\langle \tau +\xi^2-\nu\delta^{-1}\xi\right\rangle^b\widehat{h}(\tau,\xi).
    \end{align*}
    By the Fourier inversion formula, we get 
    \begin{align*}
        \|f\|_{L_{t,x}^6} &= \Big\|\bigintssss_{\mathbb{R}}\bigintssss_{\mathbb{R}}\frac{F(\tau,\xi)}{\left\langle \tau +\xi^2-\nu\delta^{-1}\xi\right\rangle^b}e^{i\tau t}e^{i\xi x}\,d\tau\,d\xi\Big\|_{L_{t,x}^6}\\
        &= \Big\|\bigintssss_{\mathbb{R}}\bigintssss_{\mathbb{R}}\frac{F(\sigma-\xi^2+\nu\delta^{-1},\xi)}{\left\langle \sigma\right\rangle^b}e^{i\left(\sigma-\xi^2+\nu\delta^{-1}\right) t}e^{i\xi x}\,d\sigma\,d\xi\Big\|_{L_{t,x}^6}\\
        &= \Big\|\bigintssss_{\mathbb{R}}\frac{e^{i\sigma t}}{\left\langle \sigma\right\rangle^b}\bigintssss_{\mathbb{R}}e^{-i\left(\xi^2-\nu\delta^{-1}\right) t}F(\sigma-\xi^2+\nu\delta^{-1},\xi)\,e^{i\xi x}\,d\xi\,d\sigma\Big\|_{L_{t,x}^6}.
    \end{align*}
    Let $\widehat{\tilde{F}_\sigma}(\xi):= F(\sigma-\xi^2+\nu\delta^{-1},\xi)$. Using the Fourier inversion formula, then the Minkowski integral inequality, after that the Strichartz estimate \eqref{Strichartz 2} with $p=q=6$, and finally the Cauchy-Schwarz inequality, we obtain 
    \begin{align*}
        \|f\|_{L_{t,x}^6}&= \Big\|\bigintssss_{\mathbb{R}}\frac{e^{i\sigma t}}{\langle\sigma\rangle^b}\,U(t)\tilde{F}_\sigma\,(x)\,d\sigma\Big\|_{L^6_{t,x}}\\
        &\leq \bigintssss_{\mathbb{R}}\frac{\|U(t)\tilde{F}_\sigma\|_{L_{t,x}^6}}{\langle\sigma\rangle^b}\,d\sigma \lesssim \bigintssss_{\mathbb{R}}\frac{\|\tilde{F}_\sigma\|_{L_{x}^2}}{\langle\sigma\rangle^b}\,d\sigma \leq \Big(\bigintssss_{\mathbb{R}}\frac{d\sigma}{\langle\sigma\rangle^{2b}}\Big)^{1/2}\|f\|_{X^{0,b}_S}.
    \end{align*} A similar argument can be used to show that $\|g\|_{L^6_{t,z}}\lesssim \|g\|_{X^{0,b}_S}$ and $\|h\|_{L^6_{t,z}}\lesssim \|h\|_{X^{0,b}_S}$. Hence, when $s=0$, the estimate \eqref{trilineal} can be obtained as a consequence of inequality \eqref{L6}. Proceeding as in the last part of the proof of \eqref{duplafg}, we can obtain the estimate \eqref{trilineal} can be obtained for the general case $s\geq0$. \end{proof}

\begin{corollary}
    Let $s\geq-\frac{1}{2}$, $b>0$ and $a<0$. Then
    \begin{equation}
        \label{cubo} \||u|^2u\|_{X^{s+\frac{1}{2},a}_\text{S}(T)}\lesssim \|u\|_{X^{0,b}_\text{S}(T)}^2\|u\|_{X^{s+\frac{1}{2},b}_\text{S}(T)}. 
    \end{equation}
\end{corollary}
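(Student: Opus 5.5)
The plan is to deduce \eqref{cubo} from the trilinear estimate \eqref{trilineal}. The link is the trivial embedding $L^2_tH^{s+\frac12}_x = X^{s+\frac12,0}_\text{S}\hookrightarrow X^{s+\frac12,a}_\text{S}$, valid for every $a\le 0$ because $\langle\tau+\xi^2-\nu\delta^{-1}\xi\rangle^{a}\le 1$. We work with extensions. Take arbitrary extensions $\tilde u$ of $u$ from $[0,T]$ to $\mathbb{R}$ in the relevant norms. Since $|\tilde u|^2\tilde u$ restricted to $[0,T]$ equals $|u|^2u$, the definition of the restriction norm gives
\begin{equation*}
\||u|^2u\|_{X^{s+\frac12,a}_\text{S}(T)}\le \||\tilde u|^2\tilde u\|_{X^{s+\frac12,a}_\text{S}}\le \||\tilde u|^2\tilde u\|_{L^2_tH^{s+\frac12}_x}.
\end{equation*}

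\textbf{Case $s\geq -\frac12$, i.e.\ $s+\frac12\ge0$.} We apply \eqref{trilineal} with regularity index $s+\frac12\ge 0$ and $f=\tilde u$, $g=\tilde u$, $h=\overline{\tilde u}$. The conjugate requires a check. If $\tilde u\in X^{r,b}_\text{S}$, then $\overline{\tilde u}$ lies in the space adapted to the symbol $\tau-\xi^2-\nu\delta^{-1}\xi$. The proof of \eqref{trilineal} only uses the $L^6_{t,x}$ Strichartz bound derived from \eqref{Strichartz 2}, and that bound holds equally for the conjugate group, since $\|\overline{U(t)\phi}\|_{L^6}=\|U(t)\phi\|_{L^6}$. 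So the $s=0$ step goes through, and so does the reduction to $s\ge0$ via $|\widehat{J^s\cdot}|^\vee$, because $\|\overline{\tilde u}\|$ in the conjugate space equals $\|\tilde u\|_{X^{r,b}_\text{S}}$. Applying \eqref{trilineal} yields three terms, each bounded by $\|\tilde u\|_{X^{0,b}_\text{S}}^2\|\tilde u\|_{X^{s+\frac12,b}_\text{S}}$. Taking the infimum over extensions gives the claim.

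\textbf{The range of $b$.} This is the main obstacle. The statement allows any $b>0$, but \eqref{trilineal} needs $b>\frac12$ for the step $\int\langle\sigma\rangle^{-2b}d\sigma<\infty$. I would handle $b>\frac12$ as above, which is the range used later in the paper.

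For $0<b\le\frac12$ I would first try interpolation. Interpolating $\|U(t)\phi\|_{L^6}$ bounds in $X^{0,\frac12+}$ with the trivial $L^2$ bound in $X^{0,0}$ gives $X^{0,b}\hookrightarrow L^{p}_{t,x}$ with $p<6$. To keep Hölder for the $L^2$ norm of a cubic product, the missing time integrability must then be absorbed by using $a<0$ strictly. Concretely, one dualizes against $X^{-s-\frac12,-a}_\text{S}$, which embeds into some $L^{q}_{t,x}$ with $q>2$. One then uses the four-factor Hölder $L^{p}\cdot L^{p}\cdot L^{p}\cdot L^{q}$, with $\frac3p+\frac1q=1$ solvable since $p$ is close to $6$ when $b$ is close to $\frac12$. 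Small $b$ would need this bookkeeping; otherwise I would state the corollary only for $b>\frac12$.
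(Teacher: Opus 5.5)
For $b>\frac12$ your argument is the intended one. The paper states the corollary without proof, directly after \eqref{trilineal}, and the deduction is exactly yours: apply \eqref{trilineal} at regularity $s+\frac12\ge0$ to $u\cdot u\cdot\overline{u}$, then use $X^{r,0}_\text{S}\hookrightarrow X^{r,a}_\text{S}$ for $a\le0$.

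Your treatment of the conjugate factor is correct, and it is even simpler than you make it: $\|\overline{u}\|_{L^6_{t,x}}=\|u\|_{L^6_{t,x}}$ and $|\widehat{\overline{u}}|^\vee=\overline{|\widehat{u}|^\vee}$.

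One step should be made explicit: the passage to restricted norms. The infimum of $\|\tilde u\|_{X^{0,b}_\text{S}}^2\|\tilde u\|_{X^{s+\frac12,b}_\text{S}}$ over a \emph{common} extension $\tilde u$ is not automatically bounded by the product of the separate infima. It is here, because after conjugating by $U(-t)$ and applying $\mathscr{F}_x$, the minimal-norm extension is the fiberwise minimal $H^b_t$-extension. That extension does not depend on the spatial index, so one extension realizes both restricted norms.

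The genuine issue is the range $0<b\le\frac12$. There more bookkeeping cannot help, because the inequality is false for small $b$. The hypothesis $b>0$ must be read as $b>\frac12$, which is the only range the paper uses (e.g.\ in \eqref{e1'} and \eqref{estimatetildeu}).

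\emph{Counterexample.} Take $u(t,x)=\phi(t/\epsilon)\psi(x)$ with $0\le\phi\in C_0^\infty((0,1))$ nonzero, $\widehat{\psi}\in C_0^\infty([-1,1])$ nonzero, and $0<\epsilon<\min(T,1)$.
\begin{itemize}
\item On bounded spatial frequencies $\langle\tau+\xi^2-\nu\delta^{-1}\xi\rangle\sim\langle\tau\rangle$. Hence $\|u\|_{X^{r,b}_\text{S}}\lesssim\|\phi(\cdot/\epsilon)\|_{H^b_t}\lesssim\epsilon^{\frac12-b}$, and the right-hand side is $\lesssim\epsilon^{\frac32-3b}$.
\item $\chi_{[0,T]}$ is a bounded multiplier on $X^{r,a}_\text{S}$ for $|a|<\frac12$, and $|u|^2u$ is supported in $(0,\epsilon)$. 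So the restricted norm of $|u|^2u$ is comparable to its global norm.
\item That global norm is $\gtrsim\|\phi^3(\cdot/\epsilon)\|_{H^a_t}\sim\epsilon^{\frac12+|a|}$.
\end{itemize}
The ratio of the left side to the right side is therefore $\gtrsim\epsilon^{3b+|a|-1}$, which tends to $\infty$ whenever $3b+|a|<1$, for instance $b=-a=\frac15$.

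This is consistent with your interpolation scheme. With the interpolated exponents, the H\"older count $\frac3p+\frac1q=1$ closes only when $3b+|a|>\frac32$ (for $|a|\le\frac12$), not for all $b>0$, $a<0$.

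The scheme also has a flaw as written. For $s>-\frac12$ you dualize against $X^{-s-\frac12,-a}_\text{S}$, which has negative spatial regularity and embeds in no $L^q_{t,x}$. You would first have to distribute $\langle\xi\rangle^{s+\frac12}$ onto the factors and then dualize at regularity $0$.
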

\label{uv}\subsection{The nonlinear interaction $uv$}
Although the bilinear estimate \eqref{be} accounts for the nonlinearity $uv$ in \eqref{Shcr-quaseILW-T}, it breaks down for \eqref{Schr-ILW-T}. To bridge this gap, one of our main technical challenges is proving a refined bilinear estimate. To this end, we first state the following auxiliary result.\\
\begin{lemma} \label{mio}Let $b,c\in\mathbb{R}$ and $\ell_1,\ell_2\geq0$ be such that $\ell_1+\ell_2>\frac{1}{2}$. Then
    \begin{equation}
        \bigintssss_{\mathbb{R}}\frac{dx}{\left\langle x\right\rangle ^{2\ell_1}\left\langle x^2+bx+c\right\rangle ^{\ell_2}}\lesssim 1,
    \end{equation} 
    where the constant is independent from $b$ and $c$. 
\end{lemma}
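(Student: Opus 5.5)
We complete the square and reduce everything to one-dimensional power integrals. Put $y:=x+\frac{b}{2}$, so that $x^2+bx+c=y^2+d$ with $d:=c-\frac{b^2}{4}$, and $\langle x\rangle=\langle y-\frac{b}{2}\rangle$. After this change of variables the integral is
\begin{equation*}
\int_{\mathbb{R}}\frac{dy}{\langle y-\frac{b}{2}\rangle^{2\ell_1}\,\langle y^2+d\rangle^{\ell_2}},
\end{equation*}
and we must bound it uniformly in $b$ and $d$. We split according to the sign and size of $d$.

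\emph{Case $d\geq0$.} Here $\langle y^2+d\rangle\geq\langle y\rangle^{2}/2$, so the integrand is bounded by a constant times $\langle y-\frac{b}{2}\rangle^{-2\ell_1}\langle y\rangle^{-2\ell_2}$. Since $2\ell_1+2\ell_2>1$, Lemma \ref{dif} (with the two exponents put in increasing order) shows that this is bounded by a constant times $\langle b/2\rangle^{-\ell}$ with $\ell\geq0$. Hence the integral is $\lesssim1$. (If one exponent vanishes, the estimate is simply $\int\langle y\rangle^{-2\ell_2}\,dy<\infty$, or the same with $\ell_1$.)

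\emph{Case $d=-R^2<0$.} Write $y^2+d=(y-R)(y+R)$.
\begin{itemize}
\item On the region $|y|\geq2R$ we have $|y^2+d|\gtrsim y^2$, and we conclude exactly as in the first case.
\item If $R\leq1$, the remaining region $|y|\leq2R$ has bounded length and the integrand is at most $1$, so its contribution is bounded.
\item Let $R\geq1$ and $|y|\leq2R$. By symmetry it suffices to treat the half $y\geq0$, where $|y^2+d|\geq R\,t$ with $t:=|y-R|\leq3R$.
\end{itemize}
This last region is the main obstacle: neither factor alone is integrable uniformly in $R$ unless $\ell_2\geq\frac12$ or $\ell_1>\frac12$. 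Ignoring the $\langle x\rangle$ weight gives
\begin{equation*}
\int_{0}^{3R}\langle Rt\rangle^{-\ell_2}\,dt\sim R^{-1}\int_0^{3R^2}\langle \sigma\rangle^{-\ell_2}\,d\sigma,
\end{equation*}
which is $\sim R^{1-2\ell_2}$ when $\ell_2<1$, and this is bounded only if $\ell_2\geq\frac12$.

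To handle the general case we interpolate with Hölder's inequality. Take $f=\langle y-\frac b2\rangle^{-2\ell_1}$ and $g=\langle R|y-R|\rangle^{-\ell_2}\chi_{\{|y|\leq2R\}}$, and choose exponents $p,p'$ with $\frac1p+\frac1{p'}=1$, $\frac1p<2\ell_1$ and $\frac1{p'}\leq2\ell_2$. Such a choice exists precisely because $2\ell_1+2\ell_2>1$; when $2\ell_1>1$ we may take $p=1$, and when $2\ell_2>1$ we may take $p'=1$.
\begin{itemize}
\item Since $2\ell_1p>1$, we get $\|f\|_{L^p}\lesssim1$ uniformly in $b$.
\item By the computation above with $\ell_2$ replaced by $\ell_2p'$, we get $\|g\|_{L^{p'}}^{p'}\lesssim R^{1-2\ell_2p'}$ (with a logarithm at $\ell_2p'=1$, and $\lesssim R^{-1}$ when $\ell_2p'>1$). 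This is $\lesssim1$ because $\ell_2p'\geq\frac12$ and $R\geq1$.
\end{itemize}
Combining the three cases yields the bound $\lesssim1$ with a constant depending only on $\ell_1$ and $\ell_2$. The only delicate point is the bookkeeping of the endpoint choices of $p$.
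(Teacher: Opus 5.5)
Your proof is correct, but it is organized differently from the paper's.

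The paper factors $x^2+bx+c=(x-\beta)(x-\gamma)$ over $\mathbb{C}$. It splits $\mathbb{R}$ according to which of $|x|$, $|x-\beta|$, $|x-\gamma|$ is smallest, and then simply asserts that each of the three pieces is $\lesssim 1$. The auxiliary inequality it records, $\sqrt{x^2+y^2}\le|x|$, actually goes the other way. On the piece where $|x|$ is smallest, one has $|x^2+bx+c|\ge x^2$ and integrability is immediate, much as in your cases $d\ge0$ and $|y|\ge 2R$. On the pieces where a root is the closest point, however, one still has to combine the weight $\langle x\rangle^{-2\ell_1}$ with the merely linear vanishing $|x^2+bx+c|\gtrsim R\,|x-\beta|$ near a real root. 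That is exactly what your Hölder step supplies. It also makes visible that $2\ell_1+2\ell_2>1$ is used precisely to find $p$ with $\frac1p<2\ell_1$ and $\ell_2p'\ge\frac12$, so that $R^{1-2\ell_2p'}\lesssim 1$.

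Completing the square buys you a one-parameter reduction (the sign and size of $d$), in which all non-degenerate configurations are handled by Lemma \ref{dif}. The paper's three-region decomposition is more symmetric but, as written, leaves the essential estimate unverified.

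Two bookkeeping remarks on your write-up:
\begin{itemize}
\item When $\ell_1=0$, the condition $\frac1p<2\ell_1$ is impossible. You must then use the endpoint $p=\infty$, $p'=1$ with $\|f\|_{L^\infty}\le 1$, as you indicate.
\item Your $g$ should carry the cutoff $\chi_{\{0\le y\le 2R\}}$ rather than $\chi_{\{|y|\le 2R\}}$, since $|y^2+d|\ge R|y-R|$ holds only for $y\ge0$. The other half follows from $y\mapsto -y$, $b\mapsto -b$ and the uniformity in $b$.
\end{itemize}
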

\begin{proof} [Proof]
    Let $\beta,\gamma\in\mathbb{C}$ be such that $x^2+bx+c=(x-\beta)(x-\gamma)$ for all $x\in\mathbb{R}$. Note that for any $y\in\mathbb{R}$, $|x-iy|=\sqrt{x^2+y^2}\leq|x|$. Then 
    \begin{align*}
        &\bigintssss_{\mathbb{R}}\frac{dx}{\left\langle x\right\rangle ^{2\ell_1}\left\langle x^2+bx+c\right\rangle ^{\ell_2}}\\
        =& \bigintssss_{\mathbb{R}}\frac{dx}{\left\langle x\right\rangle ^{2\ell_1}\left\langle (x-\beta)(x-\gamma)\right\rangle ^{\ell_2}}\\
        = & \Bigg[\bigintssss_{\substack{|x|\leq|x-\beta| \\ |x|\leq|x-\gamma|}}+ \bigintssss_{\substack{|x-\beta|\leq|x| \\ |x-\beta|\leq|x-\gamma|}}+ \bigintssss_{\substack{|x-\gamma|\leq|x| \\ |x-\gamma|\leq|x-\beta|}}\Bigg] \frac{dx}{\left\langle x\right\rangle ^{2\ell_1}\left\langle (x-\beta)(x-\gamma)\right\rangle ^{\ell_2}} \lesssim 1,
    \end{align*}
    which concludes the proof. 
\end{proof}   
We now state our main result. \\
\begin{proposition} 
Let $s\geq0$, $b>\frac{1}{2}$, $-\frac{1}{2}<a<-\frac{3}{8}$, and $1-2|a|<\theta<2|a|-\frac{1}{2}$. Suppose that $|\nu|\not=1$. Then
    \begin{equation}
        \label{estimativa bilineal} \|uv\|_{X^{s+\frac{1}{2},a}_{\text{S}}}\lesssim \|u\|_{X^{s+\frac{1}{2},\, b}_{\text{S}}}\|v\|_{X^{-\theta, \,\theta}_{\text{BO}}} + \|u\|_{X^{\frac{1}{2},\, b}_{\text{S}}}\|v\|_{X^{s-\theta, \,\theta}_{\text{BO}}}. 
    \end{equation}
\end{proposition}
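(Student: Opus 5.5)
We argue by duality and reduce to a weighted $L^2$ convolution estimate, in the spirit of the proof of \eqref{duplafg}. First the regularity index $s$ is removed. Since $\langle\xi\rangle^{s}\lesssim\langle\xi_1\rangle^{s}+\langle\xi_2\rangle^{s}$ for $\xi=\xi_1+\xi_2$, and the Fourier transforms may be replaced by their absolute values (as in the last step of the proof of \eqref{duplafg}), it suffices to prove the case $s=0$:
\begin{equation*}
\|uv\|_{X^{\frac{1}{2},a}_\text{S}}\lesssim\|u\|_{X^{\frac{1}{2},b}_\text{S}}\|v\|_{X^{-\theta,\theta}_\text{BO}}.
\end{equation*}
For the term where $\langle\xi_2\rangle^{s}$ falls on $v$, the weight $\langle\xi_1\rangle^{1/2}$ stays on $u$. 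Set
\begin{equation*}
F=\langle\sigma_1\rangle^{b}\langle\xi_1\rangle^{\frac{1}{2}}\widehat{u},\qquad G=\langle\sigma_2\rangle^{\theta}\langle\xi_2\rangle^{-\theta}\widehat{v},
\end{equation*}
with $\sigma=\tau+\xi^2-\nu\delta^{-1}\xi$, $\sigma_1=\tau_1+\xi_1^2-\nu\delta^{-1}\xi_1$ and $\sigma_2=\tau_2-\nu\xi_2|\xi_2|$. Pairing with a test function $H\in L^2$, the claim becomes the boundedness of
\begin{equation*}
\int\frac{\langle\xi\rangle^{\frac{1}{2}}\langle\xi_2\rangle^{\theta}\,F(\tau_1,\xi_1)\,G(\tau_2,\xi_2)\,H(\tau,\xi)}{\langle\sigma\rangle^{|a|}\langle\sigma_1\rangle^{b}\langle\xi_1\rangle^{\frac{1}{2}}\langle\sigma_2\rangle^{\theta}}
\end{equation*}
by $\|F\|_{L^2}\|G\|_{L^2}\|H\|_{L^2}$.

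The algebraic input is the resonance identity
\begin{equation*}
\sigma-\sigma_1-\sigma_2=\xi_2\big(2\xi_1+\xi_2-\nu\delta^{-1}+\nu|\xi_2|\big).
\end{equation*}
Consequently $\max(|\sigma|,|\sigma_1|,|\sigma_2|)\gtrsim|\xi_2|\,|2\xi_1+(1\pm\nu)\xi_2-\nu\delta^{-1}|$. The hypothesis $|\nu|\neq1$ ensures that $1\pm\nu\neq0$, so the bracket is $\gtrsim|\xi|+|\xi_2|$ whenever $\xi$ and $\xi_2$ are large and comparable and $|\xi_1|\ll|\xi_2|$ (high--low interactions on the $v$ side). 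We split frequency space into three regions:
\begin{enumerate}
\item $|\xi_2|\lesssim1$, or $|\xi|\lesssim\langle\xi_1\rangle$. Here the factor $\langle\xi\rangle^{1/2}$ is absorbed by $\langle\xi_1\rangle^{1/2}$ and the factor $\langle\xi_2\rangle^{\theta}$ is bounded, up to the harmless loss that must be compensated by the time weights.
\item The high--high region $|\xi_1|\sim|\xi_2|\gg1$.
\item The region where $|\xi|\sim|\xi_2|\gg|\xi_1|$.
\end{enumerate}
Within each region we further split according to which of $|\sigma|,|\sigma_1|,|\sigma_2|$ is largest. We then apply Cauchy--Schwarz in the variables that leave the dominant modulus integrated out, reducing to an $L^\infty$ bound of the form
\begin{equation*}
\sup\ \frac{\langle\cdot\rangle^{\cdots}}{\langle\sigma_{\max}\rangle^{\cdots}}\left(\int\frac{d\tau'\,d\xi'}{\langle\cdot\rangle^{2\cdots}\langle\cdot\rangle^{2\cdots}}\right)^{1/2}\lesssim1.
\end{equation*}
The $\tau'$ integrals are handled by Lemma \ref{dif}. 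This produces a factor $\langle\cdot\rangle^{-(2|a|+2\theta-1)}$ (when $\langle\sigma_1\rangle^{2b}$ is integrated against $\langle\sigma\rangle^{2|a|}$ or $\langle\sigma_2\rangle^{2\theta}$), which is exactly where the conditions $\theta>1-2|a|$ and $2b>1$ enter. The resulting $\xi'$ integrals contain a quadratic polynomial in the integration variable inside $\langle\cdot\rangle$. These are controlled uniformly in its coefficients by Lemma \ref{mio}, using the additional $\langle\xi'\rangle^{-2\ell_1}$ weight coming from $\langle\xi_1\rangle^{-1/2}$ or $\langle\xi_2\rangle^{-\theta}$, provided $\ell_1+\ell_2>\frac12$. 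Alternatively, a change of variables $\mu=$ (resonance function) gives the Jacobian $1/|\xi_2|$ or $1/|\xi|$, exactly as in the proof of \eqref{duplafg}.

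I expect the main obstacle to be region 3 when $\sigma_2$ is the largest modulus. There the gain $|\xi_2||\xi|\lesssim\langle\sigma_2\rangle$ must pay for $\langle\xi\rangle^{1/2}\langle\xi_2\rangle^{\theta}$ while $v$ carries only $\theta<\frac12$ powers of modulation. The plan is to use $\langle\sigma_2\rangle^{-\theta}\lesssim(|\xi||\xi_2|)^{-\theta}$ to cancel $\langle\xi_2\rangle^{\theta}$ and part of $\langle\xi\rangle^{1/2}$. The remaining $\langle\xi\rangle^{1/2-\theta}$ is then absorbed by the Jacobian gain $|\xi|^{-1/2}$ from integrating in $\xi_1$ against $\langle\sigma\rangle^{-2|a|}\langle\sigma_1\rangle^{-2b}$. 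This is where the upper restriction $\theta<2|a|-\frac12$ and $|a|>\frac38$ should be forced, and where it is essential that $|\nu|\neq1$: otherwise the bracket can vanish and no gain is available. In the resonant case the argument would fail precisely at this step.
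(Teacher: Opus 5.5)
Your toolbox (duality, reduction to $s=0$, casework on the largest modulation, Lemmas \ref{dif} and \ref{mio}, substitution along the resonance function) is the paper's. However, your frequency decomposition misses the region where the estimate is actually delicate.

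\textbf{The near-resonant high--high interaction.} By your own identity, the resonant set $R=0$, $\xi_2\neq0$ is
$\xi_1=-\frac{1+\nu\sgn(\xi_2)}{2}\xi_2+\frac{\nu}{2\delta}$, $\xi=\frac{1-\nu\sgn(\xi_2)}{2}\xi_2+\frac{\nu}{2\delta}$.
So for $|\nu|\neq1$ and $|\xi_2|\gg\delta^{-1}$, all three frequencies are comparable there: this set lies in your regions 1--2, not in region 3. On it the multiplier is $\sim\langle\xi_2\rangle^{\theta}$ while $\sigma,\sigma_1,\sigma_2$ can all vanish. Hence your claim that in region 1 the loss $\langle\xi_2\rangle^{\theta}$ is \emph{compensated by the time weights} fails there.

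Lemma \ref{mio} does not rescue this. The spatial weight is $\langle\xi_2\rangle^{+\theta}$, as in your own integrand, so there is no decaying $\langle\xi_2\rangle^{-\theta}$ to play the role of $\langle x\rangle^{-2\ell_1}$. Such a weight appears only after trading $\langle\sigma_{\max}\rangle$ for powers of $\xi_2$, i.e.\ when $|R|\gtrsim\xi_2^2$.

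Near resonance the loss must be paid by transversality, and the Jacobian is not simply $|\xi_2|$ or $|\xi|$. With $(\tau,\xi)$ fixed ($\sigma$ largest), $\partial_{\xi_2}R=2\xi_1+2\nu|\xi_2|-\nu\delta^{-1}$. This vanishes where $\xi_1=-\nu|\xi_2|+\frac{\nu}{2\delta}$, and there instead $R=(1-\nu\sgn(\xi_2))\xi_2^2$.

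The missing idea is the paper's dichotomy. Write $\xi_u,\xi_v$ for the frequencies of $u,v$, and split each modulation region according to whether $|R|\le\frac{|1-|\nu||}{2}\xi_v^2$.
\begin{itemize}
\item In the small-resonance part, $\langle\xi\rangle\lesssim\langle\xi_u\rangle$. In each Cauchy--Schwarz configuration the derivative of $R$ in the integrated frequency is $\gtrsim|\xi_v|$ (cf.\ \eqref{sustficti}); this is where $|\nu|\neq1$ is really used. The substitution $\mu=R+\mathrm{const}$ then turns $\langle\xi_v\rangle^{2\theta}$ into $\langle\xi_v\rangle^{2\theta-1}\le1$, which is where $\theta<\frac12$ is needed.
\item In the large-resonance part, $\xi_v^2\lesssim|\sigma_{\max}|$, and Lemma \ref{mio} closes the estimate.
\end{itemize}

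\textbf{Region 3 is not the main obstacle.} Region 3 with $\sigma_2$ largest is a routine large-resonance case (the paper's $C_2$), and your bookkeeping there does not close as written. After integrating in $\tau_1$ you get $\langle\sigma_2+R\rangle^{-2|a|}$ with $2|a|<1$, which is not integrable in $\mu$. Over $|\mu|\lesssim|\sigma_2|$, together with the Jacobian, it gives $|\xi_2|^{-1}\langle\sigma_2\rangle^{1-2|a|}$. This is unbounded in $\sigma_2$ once you have converted $\langle\sigma_2\rangle^{-\theta}$ into $(|\xi||\xi_2|)^{-\theta}$.

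If instead you keep the modulation, the squared bound is $N^{2\theta}\langle\sigma_2\rangle^{1-2|a|-2\theta}\lesssim N^{2-4|a|-2\theta}$ with $N\sim|\xi_2|$. This uses the lower bound $\theta>1-2|a|$, not the upper one. In the paper the upper bound $\theta<2|a|-\frac12$ enters only through $\ell_1=2|a|-\frac12-\theta\ge0$ in Lemma \ref{mio}, in the $\sigma$-largest, large-resonance region.

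\textbf{A smaller misattribution.} Lemma \ref{dif} with an exponent $2b>1$ yields $\langle\cdot\rangle^{-2|a|}$ or $\langle\cdot\rangle^{-2\theta}$. The factor $\langle\cdot\rangle^{-(2|a|+2\theta-1)}$ arises only when $\langle\sigma\rangle^{-2|a|}$ is integrated against $\langle\sigma_2\rangle^{-2\theta}$, i.e.\ when $\sigma_1$ is the largest modulation.
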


\begin{proof} [Proof] Proceeding as the last part of the proof of \eqref{duplafg}, it suffices to prove \eqref{estimativa bilineal} for $s=0$. Indeed, define 
    \begin{equation*}
        f(\tau,\xi):= \left\langle \tau+\xi^2-\nu \delta^{-1}\xi\right\rangle^b\left\langle\xi\right\rangle^{\frac{1}{2}}\widehat{u}(\tau,\xi), 
    \end{equation*}
    and
    \begin{equation*}
        g(\tau,\xi):= \left\langle \tau-\nu\xi|\xi|\right\rangle^\theta\left\langle\xi\right\rangle^{-\theta}\widehat{v}(\tau,\xi).
    \end{equation*} Note that $\|f\|_{L^2\left(\mathbb{R}^2\right)}=\|u\|_{X^{1/2,b}_{\text{S}}}$ and $\|g\|_{L^2\left(\mathbb{R}^2\right)}=\|v\|_{X^{-\theta, \,\theta}_{\text{BO}}}$. Moreover, using a duality argument, we have
    \begin{equation}
                \label{produtodual}\|uv\|_{X^{1/2,a}_{\text{S}}} =\sup_{\|h\|_{L^2\left(\mathbb{R}^2\right)}=1}\left|\int_{\mathbb{R}^4}\Omega(\tau_1,\xi_1,\tau,\xi)f(\tau_2,\xi_2)g(\tau_1,\xi_1)h(\tau,\xi)\,dV\right|,
    \end{equation}
where $\tau_2:=\tau-\tau_1$, $\xi_2=\xi-\xi_1$, $dV:=d\tau_1\,d\xi_1\,d\tau\,d\xi$, and
\begin{equation}
   \label{Omega} \Omega(\tau_1,\xi_1,\tau,\xi):=\frac{\langle\xi\rangle^{\frac{1}{2}}}{\langle\sigma\rangle^{|a|}\langle\xi_1\rangle^{-\theta}\langle\sigma_1\rangle^{\theta}\langle\xi_2\rangle^{\frac{1}{2}}\langle\sigma_2\rangle^{b}}, 
\end{equation}
with 
   $\sigma:=\tau+\xi^2-\nu\delta^{-1}\xi$, \, $\sigma_1:= \tau_1-\nu\xi_1|\xi_1|,$ and $\sigma_2=\tau_2+\xi^2_2-\nu\delta^{-1}\xi_2.$ We decompose $\mathbb{R}^4$ as the union of the following sets:
\begin{align*}
    &A:= \left\{(\tau_1,\xi_1,\tau,\xi)\in\mathbb{R}^4: |\xi_1|\leq1\right\},\\
&B:= \left\{(\tau_1,\xi_1,\tau,\xi)\in A^c: |\sigma|\geq|\sigma_1|, |\sigma_2|\right\},\\
 &C:= \left\{(\tau_1,\xi_1,\tau,\xi)\in  A^c: |\sigma_1|\geq|\sigma|, |\sigma_2|\right\}, \, \text{and,}\\
&D:= \left\{(\tau_1,\xi_1,\tau,\xi)\in  A^c : |\sigma_2|\geq|\sigma|, |\sigma_1|\right\}.
\end{align*} From equality \eqref{produtodual} we have
\begin{equation}
     \label{est}\|uv\|_{X^{1/2,a}_{\text{S}}} \leq I_A+I_B+I_C+I_D,
\end{equation}
where 
\begin{equation*}
    I_U:= \sup_{\|h\|_{L^2\left(\mathbb{R}^2\right)}=1}\Big|\int_{\mathbb{R}^4}\left(\chi_{U}\Omega\right)(\tau_1,\xi_1,\tau,\xi)f(\tau_2,\xi_2)g(\tau_1,\xi_1)h(\tau,\xi)\,dV\Big|,
\end{equation*}
for $U\in\{A,B,C,D\}.$ We aim to prove that each term on the right-hand side of  \eqref{est} is bounded by $C\|u\|_{X^{\frac{1}{2},a}}\|v\|_{X^{-\theta,\theta}}$ for some constant $C>0$. We need the so-called resonance function: 
\begin{equation}
    \label{resonance function}
    \begin{split}
        R:=-\sigma+\sigma_1+\sigma_2&= \left(1-\nu \sgn(\xi_1) \right)\xi^2_1-\left(2\xi-\nu\delta^{-1}\right)\xi_1 \\&= -\left(1+\nu \sgn(\xi_1) \right)\xi^2_1-\left(2\xi_2-\nu\delta^{-1}\right)\xi_1.
    \end{split} 
\end{equation}  We divide each set $U\in\left\{B, C, D\right\}$ into the following regions:
\begin{align*}
    &U_1:= \Big\{ (\tau_1,\xi_1,\tau,\xi)\in U: |R| \leq \frac{|1-|\nu||}{2}\xi_1^2 \Big\},\, \text{and} \\
    &U_2:= \Big\{ (\tau_1,\xi_1,\tau,\xi)\in U: |R| \geq \frac{|1-|\nu||}{2}\xi_1^2 \Big\}. 
\end{align*}
Using $\langle\xi\rangle\lesssim\langle\xi_1\rangle\langle\xi_2\rangle$, we get 
\begin{equation}
    \label{omega usual} \Omega(\tau_1,\xi_1,\tau,\xi)\lesssim\frac{\langle\xi_1\rangle^{\theta+\frac{1}{2}}}{\langle\sigma\rangle^{|a|}\langle\sigma_1\rangle^{\theta}\langle\sigma_2\rangle^{b}}.
\end{equation}
In $U_1$ a better estimate for $\Omega$ can be obtained: let us see that $\langle\xi\rangle\lesssim\langle\xi_2\rangle$ in $U_1$. In fact, 
\begin{align*}
    2|\xi| &= \left|\left(1-\nu\sgn(\xi_1)\right)\xi_1-\left(2\xi-\nu\delta^{-1}\right)-\left(1-\nu\sgn(\xi_1)\right)\xi_1-\nu\delta^{-1}\right|\\
            &\leq \left|\left(1-\nu\sgn(\xi_1)\right)\xi_1-\left(2\xi-\nu\delta^{-1}\right)\right| + \left(1+|\nu|\right)|\xi_1|+|\nu|\delta^{-1}\\
            &\leq\left(\frac{\left|1-|\nu|\right|}{2}+1+|\nu|\right)|\xi_1|+|\nu|\delta^{-1},
\end{align*}
and
\begin{align*}
    2|\xi_2| &= \left|-\left(1+\nu\sgn(\xi_1)\right)\xi_1-\left(2\xi_2-\nu\delta^{-1}\right)+\left(1+\nu\sgn(\xi_1)\right)\xi_1-\nu\delta^{-1}\right|\\
    &\geq \left|\left(1+\nu\sgn(\xi_1)\right)\xi_1-\nu\delta^{-1}\right|-\left|-\left(1+\nu\sgn(\xi_1)\right)\xi_1-\left(2\xi_2-\nu\delta^{-1}\right)\right|\\
    &\geq \left|1-|\nu|\right||\xi_1|-|\nu|\delta^{-1}-\frac{\left|1-|\nu|\right|}{2}|\xi_1|\\
    &= \frac{\left|1-|\nu|\right|}{2}|\xi_1|-|\nu|\delta^{-1},
\end{align*}
implies $\langle\xi\rangle\lesssim_{|\nu|,\delta^{-1}}\langle\xi_2\rangle$ in $U_1$, as we wanted. As a consequence, 
\begin{equation}
    \label{Omega U1} \left(\chi_{U_1}\Omega\right)(\tau_1,\xi_1, \tau, \xi) \lesssim \frac{\langle\xi_1\rangle^{\theta}}{\langle\sigma\rangle^{|a|}\langle\sigma_1\rangle^{\theta}\langle\sigma_2\rangle^{b}}.
\end{equation} Let us make another observation about $U_1$: in this region, 
\begin{align*}
&\left|1-|\nu|\right||\xi_1|\\\leq&\left|\left(1-\nu\sgn(\xi_1)\right)\xi_1\right|\\
\leq &\left|2\left(1-\nu\sgn(\xi_1)\right)\xi_1-\left(2\xi-\nu\delta^{-1}\right)\right| + \left|\left(2\xi-\nu\delta^{-1}\right)-\left(1-\nu\sgn(\xi_1)\right)\xi_1\right|\\
\leq &\left|2\left(1-\nu\sgn(\xi_1)\right)\xi_1-\left(2\xi-\nu\delta^{-1}\right)\right| + \frac{\left|1-|\nu|\right|}{2}|\xi_1|,
\end{align*}
where in the last inequality, we used the definition of $U_1$. Hence,
\begin{equation}
    \label{sustficti}  \frac{\left|1-|\nu|\right|}{2}|\xi_1| \leq \left|2\left(1-\nu\sgn(\xi_1)\right)\xi_1-\left(2\xi-\nu\delta^{-1}\right)\right| \, \, \, \, \text{in} \, \, \, \, U_1. 
\end{equation} Now we return to the problem of bounding each term on the right hand side of \eqref{est}. Let us start with $I_A$ and $I_B$. In fact, using Cauchy-Schwarz inequality and the Fubini theorem, we obtain  
\begin{align*}
    &\Big|\int_{\mathbb{R}^4}\left(\chi_{A}\Omega\right)(\tau_1,\xi_1,\tau,\xi)f(\tau_2,\xi_2)g(\tau_1,\xi_1)h(\tau,\xi)\,dV\Big|\\
    \leq &\Big\|\int_{\mathbb{R}^2}\left(\chi_{A}\Omega\right)(\tau_1,\xi_1,\tau,\xi)f(\tau_2,\xi_2)g(\tau_1,\xi_1)\,d\tau_1\,d\xi_1\Big\|_{L^2_{\tau,\xi}}\|h\|_{L^2\left(\mathbb{R}^2\right)}\\
    \leq &\bigg\|\Big(\int_{\mathbb{R}^2}|\chi_{A}\Omega|^2\,d\tau_1\,d\xi_1\Big)^{{1/2}}\left(\int_{\mathbb{R}^2}\left|f(\tau_2,\xi_2)\right|^2\left|g(\tau_1,\xi_1)\right|^2\,d\tau_1\,d\xi_1\right)^{{1/2}}\bigg\|_{L^2_{\tau,\xi}}\|h\|_{L^2\left(\mathbb{R}^2\right)}\\
    \leq &\|\chi_{A}\Omega\|_{L^\infty_{\tau,\xi}\,L^2_{\tau_1,\xi_1}}\left\|f\right\|_{L^2(\mathbb{R}^2)}\left\|g\right\|_{L^2(\mathbb{R}^2)}\|h\|_{L^2\left(\mathbb{R}^2\right)},
\end{align*}
which implies that $I_{A}\leq\left\|\chi_{A}\Omega\right\|_{L^\infty_{\tau,\xi}\,L^2_{\tau_1,\xi_1}}\left\|f\right\|_{L^2(\mathbb{R}^2)}\left\|g\right\|_{L^2(\mathbb{R}^2)}$. In a similar way, we can show that $I_{B}\leq\left\|\chi_{ B}\Omega\right\|_{L^\infty_{\tau,\xi}\,L^2_{\tau_1,\xi_1}}\left\|f\right\|_{L^2(\mathbb{R}^2)}\left\|g\right\|_{L^2(\mathbb{R}^2)}$. Then, we aim to prove that 
\begin{equation}
    \label{B1}\left\|\chi_{A}\Omega\right\|_{L^\infty_{\tau,\xi}\,L^2_{\tau_1,\xi_1}}, \left\|\chi_{B}\Omega\right\|_{L^\infty_{\tau,\xi}\,L^2_{\tau_1,\xi_1}}\lesssim1. 
\end{equation} In fact, by \eqref{omega usual}
\begin{align*}
    \iint_{\mathbb{R}^2}\left|\chi_A\Omega\right|^2\,d\tau_1\,d\xi_1
    \lesssim \frac{1}{\langle\sigma\rangle^{2|a|}}\int_{-1}^{1}\int_{\mathbb{R}}\frac{\langle\xi_1\rangle^{2\theta+1}}{\langle\sigma_1\rangle^{2\theta}\langle\sigma_2\rangle^{2b}}\,d\tau_1\,d\xi_1
    \lesssim \frac{1}{\langle\sigma\rangle^{|a|}}\int_{-1}^{1}\frac{\langle\xi_1\rangle^{2\theta+1}}{\left\langle R+\sigma \right\rangle^{2\theta}}\,d\xi_1,
\end{align*}
where the last inequality was obtained due to Lemma \ref{dif} and identity \eqref{resonance function}. Since the integration domain in the last integral has finite measure, 
\begin{equation*}
    \iint_{\mathbb{R}^2}\left(\chi_A\Omega\right)(\tau_1,\xi_1,\tau,\xi)^2\,d\tau_1\,d\xi_1
    \lesssim 1.
\end{equation*}
Taking square roots and the supremum over $\tau$ and $\xi$, we get 
\begin{equation}
    \label{B1.1}\left\|\chi_{A}\Omega\right\|_{L^\infty_{\tau,\xi}\,L^2_{\tau_1,\xi_1}}\lesssim1.
\end{equation} Now, we turn to the set $B$. First, consider $B_1$. Using inequality \eqref{Omega U1}, Lemma \ref{dif} and the fact that $|R|=|-\sigma+\sigma_1+\sigma_2|\leq3|\sigma|$ in $B$, we get
\begin{align*}
    \iint_{\mathbb{R}^2}\left|\left(\chi_{B_1}\Omega\right)\right|^2\,d\tau_1d\xi_1 \lesssim &\frac{1}{\langle\sigma\rangle^{2|a|}}\iint\frac{\langle\xi_1\rangle^{2\theta}}{\langle\sigma_1\rangle^{2\theta}\langle\sigma_2\rangle^{2b}}\,d\tau_1d\xi_1\\
    \lesssim &\int \frac{\langle\xi_1\rangle^{2\theta}}{\langle R\rangle^{2|a|}\langle R+\sigma\rangle^{2\theta}}\,d\xi_1\\
    \leq &\int_{|R|\leq |R+\sigma|} \frac{\langle\xi_1\rangle^{2\theta}}{\langle R\rangle^{2\left(|a|+\theta\right)}}\,d\xi_1 + \int_{|R|\geq |R+\sigma|} \frac{\langle\xi_1\rangle^{2\theta}}{\langle R+\sigma\rangle^{2\left(|a|+\theta\right)}}\,d\xi_1\\
    \lesssim &\bigintssss \frac{\langle\xi_1\rangle^{2\theta}}{\langle R+p(\sigma)\rangle^{2\left(|a|+\theta\right)}}\,d\xi_1,
\end{align*}
where $p(\sigma)\in\{0,\sigma\}$. Making the substitution $\mu= R + p(\sigma)$ in the last integral, and using inequality \eqref{sustficti}, we get 
\begin{align*}
     \bigintssss \frac{\langle\xi_1\rangle^{2\theta}}{\langle R+p(\sigma)\rangle^{2\left(|a|+\theta\right)}}
     =& \bigintssss \frac{\langle\xi_1\rangle^{2\theta}}{\langle \mu\rangle^{2\left(|a|+\theta\right)}}\frac{d\mu}{\left|2\left(1-\nu\sgn(\xi_1)\right)\xi_1-\left(2\xi-\nu\delta^{-1}\right)\right|}\\
     \lesssim & \bigintssss \frac{1}{\langle\mu\rangle^{2\left(|a|+\theta\right)}}\frac{d\mu}{\langle\xi_1\rangle^{1-2\theta}}
     \leq \int_{\mathbb{R}}\frac{d\mu}{\langle\mu\rangle^{2\left(|a|+\theta\right)}}
     \lesssim1,
\end{align*} where for the last inequality we observe that $2\left(|a|+\theta\right)=\left(2|a|+\theta\right)+\theta>1+\theta>1$. \\ Then
\begin{equation}
    \label{chiB1} \left\|\chi_{B_1}\Omega\right\|_{L^\infty_{\tau,\xi}\,L^2_{\tau_1,\xi_1}} \lesssim 1. 
\end{equation}
Now, consider the region $B_2$. Using inequality \eqref{omega usual}, Lemma \ref{dif} and the fact that in $B_2$, $\xi_1^2\lesssim |R| = |-\sigma+\sigma_1+\sigma_2|\leq 3|\sigma|$, 
\begin{align*}
     \iint_{\mathbb{R}^2}\left|\chi_{B_2}\Omega\right|^2\,d\tau_1d\xi_1
     \lesssim  \frac{1}{\langle\sigma\rangle^{2|a|}}\int\frac{\langle\xi_1\rangle^{2\theta+1}}{\langle R + \sigma\rangle^{2\theta}}d\xi_1 \lesssim \int_{\mathbb{R}} \frac{d\xi_1}{\langle\xi_1\rangle^{4|a|-1-2\theta}\langle R + \sigma \rangle^{2\theta}}\lesssim1,
\end{align*}
where the last inequality is due to Lemma \ref{mio}, taking $\ell_1=2|a|-\frac{1}{2}-\theta$ and $\ell_2=2\theta$. Then 
\begin{equation}
    \label{chiB2} \left\|\chi_{B_2}\Omega\right\|_{L^\infty_{\tau,\xi}\,L^2_{\tau_1,\xi_1}} \lesssim 1. 
\end{equation}
Combining inequalities \eqref{B1.1}, \eqref{chiB1} and \eqref{chiB2}, we obtain the estimate \eqref{B1}.
Now, we leave to the region $C$. Note that 
\begin{align*}
    &\Big|\int_{\mathbb{R}^4}\left(\chi_{C}\Omega\right)(\tau,\tau_1,\xi,\xi_1)f(\tau_2,\xi_2)g(\tau_1,\xi_1)h(\tau,\xi)\,dV\Big|\\
        \leq &\bigg\|\Big(\int_{\mathbb{R}^2}\left|\chi_{C}\Omega\right|^2\,d\tau\,d\xi\Big)^{{1/2}}\Big(\int_{\mathbb{R}^2}\left|f(\tau_2,\xi_2)\right|^2\left|h(\tau,\xi)\right|^2\,d\tau\,d\xi\Big)^{{1/2}}\bigg\|_{L^2_{\tau_1,\xi_1}}\|g\|_{L^2\left(\mathbb{R}^2\right)}\\
    \leq &\left\|\chi_{C}\Omega\right\|_{L^\infty_{\tau_1,\xi_1}\,L^2_{\tau,\xi}}\left\|f\right\|_{L^2(\mathbb{R}^2)}\left\|g\right\|_{L^2(\mathbb{R}^2)}\|h\|_{L^2\left(\mathbb{R}^2\right)},
\end{align*}
obtaining $I_C \leq \left\|\chi_{D}\Omega\right\|_{L^\infty_{\tau_1,\xi_1}\,L^2_{\tau,\xi}}\left\|f\right\|_{L^2(\mathbb{R}^2)}\left\|g\right\|_{L^2(\mathbb{R}^2)}$. Therefore, let us prove that
\begin{equation}
    \label{C}\left\|\chi_{C}\Omega\right\|_{L^\infty_{\tau_1,\xi_1}\,L^2_{\tau,\xi}}\lesssim1. 
\end{equation}
First, consider $C_1$. Using inequality \eqref{omega usual} and Lemma \ref{dif},
\begin{align*}
\iint \left|\chi_{C_1}\Omega\right|^2 d\tau\,d\xi
    \lesssim & \frac{\langle\xi_1\rangle^{2\theta}}{\langle\sigma_1\rangle^{2\theta}}\int_{|\sigma_2-\sigma|\leq 2 |\sigma_1|}\frac{ d\xi}{\langle R-\sigma_1\rangle^{2|a|}}\\
    \lesssim &\frac{\langle\xi_1\rangle^{2\theta}}{\langle\sigma_1\rangle^{2\theta}} \int_{|\mu|\leq 2|\sigma_1|} \frac{1}{\langle \mu \rangle^{2|a|}}\frac{d\mu}{2|\xi_1|}\\
    \sim&\frac{1}{\langle\xi_1\rangle^{1-2\theta}\langle\sigma_1\rangle^{2\theta}}\left[\frac{(1+\mu)^{-2|a|+1}}{-2|a|+1}\right]_{0}^{2|\sigma_1|}
    \lesssim \frac{1}{\langle\xi_1\rangle^{1-2\theta}\langle\sigma_1\rangle^{2|a|-1+2\theta}}
    \lesssim 1.
\end{align*} For $C_2$ we make similar considerations, but taking into account that in this region $\xi_1^2\lesssim |R| = |-\sigma+\sigma_1+\sigma_2|\leq 3|\sigma_1|$. Indeed 
\begin{align*}
\iint_{\mathbb{R}^2}\left|\chi_{C_2}\Omega\right|^2\,d\tau_1d\xi_1
     &\lesssim  \frac{\langle\xi_1\rangle^{2\theta+1}}{\langle\sigma_1\rangle^{2\theta}}\int\frac{d\xi}{\langle R - \sigma_1\rangle^{2|a|}} \\&\lesssim \frac{\langle\xi_1\rangle^{2\theta}}{\langle\sigma_1\rangle^{2|a|-1+2\theta}}\lesssim \frac{1}{\langle\xi_1\rangle^{4|a|-2+2\theta}}\lesssim1,
\end{align*} 
It shows the estimate \eqref{C}. Finally, let us consider $I_D$. Indeed, explicitly making the change of variables $\xi_2=\xi-\xi_1$ and $\tau_2=\tau-\tau_1$, 
\begin{align*}
     &I_D=\sup_{\|h\|_{L^2}=1}\Big|\int_{\mathbb{R}^4}\big(\chi_{\tilde{D}}\tilde{\Omega}\big)(\tau_2,\tau_1,\xi_2,\xi_1)f(\tau_2,\xi_2)g(\tau_1,\xi_1)h(\tau,\xi)\,d\tilde{V}\Big|,
\end{align*}
where $d\tilde{V}:=d\tau_1\,d\xi_1\,d\tau_2\,d\xi_2$, $\tilde{\Omega}(\tau_1,\xi_1,\tau_2,\xi_2):=\Omega(\tau_1,\xi_1,\tau_1+\tau_2,\xi_1+\xi_2)$, and $\tilde{D}:=\left\{(\tau_1,\xi_1,\tau_2,\xi_2)\in\mathbb{R}^4: (\tau_1,\xi_1,\tau_1+\tau_2,\xi_1+\xi_2)\in D\right\}$. Define $\tilde{D_1}$ and $\tilde{D_2}$ in a similar way.  \\ \\ Using Cauchy-Schwarz and Fubini's theorem , $$I_D\lesssim\big\|\chi_{\tilde{D}}\tilde{\Omega}\big\|_{L^\infty_{\tau_2,\xi_2}\,L^2_{\tau_1,\xi_1}}\left\|f\right\|_{L^2(\mathbb{R}^2)}\left\|g\right\|_{L^2(\mathbb{R}^2)}.$$ Therefore we have to prove that 
\begin{equation}
    \label{E} \big\|\chi_{\tilde{D}}\tilde{\Omega}\big\|_{L^\infty_{\tau_2,\xi_2}\,L^2_{\tau_1,\xi_1}}\lesssim1.
\end{equation}
Consider $D_1$. Using inequality \eqref{Omega U1}, Lemma \ref{dif} and the fact that $|R|\leq3|\sigma_2|$ in $D$, 
\begin{align*}
   &\iint_{\mathbb{R}^2}|\chi_{\tilde{D_1}}\tilde{\Omega}|^2\,d\tau_1\,d\xi_1\\\lesssim &\frac{1}{\langle\sigma_2\rangle^{2b}}\iint\frac{\langle\xi_1\rangle^{2\theta}}{\langle\sigma\rangle^{2|a|}\langle\sigma_1\rangle^{2\theta}}\,d\tau_1 d\xi_1\\
   \lesssim &\int\frac{\langle\xi_1\rangle^{2\theta}}{\langle R\rangle^{2b}\langle R + \sigma_2\rangle^{2\left(|a|+\theta\right)-1}}\,d\xi_1\\
   \lesssim &\int_{|R|\leq |R+\sigma_2|}\frac{\langle\xi_1\rangle^{2\theta}}{\langle R \rangle^{2\left(|a|+\theta\right)+\left(2b-1\right)}}\,d\xi_1 + \int_{|R|\geq |R+\sigma_2|}\frac{\langle\xi_1\rangle^{2\theta}}{\langle R + \sigma_2\rangle^{2\left(|a|+\theta\right)+\left(2b-1\right)}}\,d\xi_1\\
   \lesssim&\bigintssss\frac{\langle\xi_1\rangle^{2\theta}}{\langle R + p(\sigma_2)\rangle^{2\left(|a|+\theta\right)+\left(2b-1\right)}}\,d\xi_1,
\end{align*}
where $p(\sigma_2)\in\{0,\sigma_2\}$. Making the substitution $\mu= R + \sigma_2$ in the last integral, and using inequality \eqref{sustficti},  
\begin{align*}
    &\bigintssss\frac{\langle\xi_1\rangle^{2\theta}}{\langle R + p(\sigma_2)\rangle^{2\left(|a|+\theta\right)+\left(2b-1\right)}}\,d\xi_1\\
    = & \bigintssss\frac{\langle\xi_1\rangle^{2\theta}}{\langle \mu\rangle^{2\left(|a|+\theta\right)+\left(2b-1\right)}}\frac{d\mu}{\left|2\left(1+\nu\sgn(\xi_1)\right)\xi_1+\left(2\xi_2-\nu\delta^{-1}\right)\right|}\\
    \lesssim & \bigintssss\frac{1}{\langle \mu\rangle^{2\left(|a|+\theta\right)+\left(2b-1\right)}}\frac{d\mu}{\langle\xi_1\rangle^{1-2\theta}}
    \leq \bigintssss\frac{d\mu}{\langle \mu\rangle^{2\left(|a|+\theta\right)+\left(2b-1\right)}}
    \lesssim1. 
\end{align*}
Finally, let us turn to the region $D_2$. Using inequality \eqref{omega usual}, Lemma \ref{dif} and the fact that in $D_2$, $\xi_1^2\lesssim |R| = |-\sigma+\sigma_1+\sigma_2|\leq 3|\sigma_2|$, we get 
\begin{align*}
     \iint_{\mathbb{R}^2}\left|\chi_{\tilde{D_2}}\Omega\right|^2\,d\tau_1d\xi_1
     &\lesssim  \frac{1}{\langle\sigma_2\rangle^{2b}}\bigintssss\frac{\langle\xi_1\rangle^{2\theta+1}}{\langle R + \sigma_2\rangle^{2\left(|a|+\theta\right)-1}}d\xi_1\\ &\lesssim \bigintssss_{\mathbb{R}} \frac{d\xi_1}{\langle\xi_1\rangle^{4b-1-2\theta}\langle R + \sigma_2 \rangle^{2\left(|a|+\theta\right)-1}}\lesssim1,
\end{align*}
where the last inequality is due to Lemma \ref{mio}, taking $\ell_1=2b-\frac{1}{2}-\theta$ and $\ell_2=2\left(|a|+\theta\right)-1$ and noting that 
$$\ell_1+\ell_2 = (2b-1)+(\theta+2|a|)-\frac{1}{2}>0+1-\frac{1}{2}=\frac{1}{2}.$$
It shows the estimate \eqref{E}. 
\end{proof}
	\section{\label{Well-posedness}Well-posedness}
    \subsection{A priori estimates for smooth solutions}
    Let us first derive a priori estimates for smooth solutions to the system \eqref{Schr-ILW-T}. Let $(u,v)$ a smooth solution of \eqref{Schr-ILW-T} with initial data $(u(0),v(0)):=(u_0,v_0)$ and $w:=w(u,v)$ the gauge transformation defined in \eqref{gauge}. Let $T>0$ such that $(u,v)$ is defined. Moreover, let us take $T$ small enough. We define
    \begin{multline}
        \label{N_T} N_T^s\equiv N_T^s(u,v):= \max\Big(\left\|u\right\|_{X^{s+\frac{1}{2},b}_\text{S}(T)}, \left\|v\right\|_{L^\infty_TH^s_x}, \left\|v\right\|_{L^4_TW^{s,4}_x}, \\\left\|w(0)\right\|_{H^s}, \left\|\chi_{[0,T]}(t)\mathscr{N}_\delta(u,v,w)\right\|_{X^{s+\frac{1}{2},b}_\text{S}(T)}\Big),
    \end{multline}
    where $\mathscr{N}_\delta(u,v,w)$ is defined in \eqref{Ndelta}. We first note that the function $T\mapsto N_T^s$ is continuous and non-decreasing. On the other hand, considering the Duhamel formulation of the first equation of \eqref{Schr-ILW-T}, applying the classic linear estimates in Bourgain spaces, and then, applying estimates \eqref{estimativa bilineal} and \eqref{cubo}, we get 
    \begin{multline}
    \label{e1'} \left\|u\right\|_{X^{s+\frac{1}{2},b}_\text{S}(T)}\lesssim \left\|u(0)\right\|_{H^{s+\frac{1}{2}}} + T^{\kappa}\left\|u\right\|_{X^{\frac{1}{2},b}_\text{S}(T)}\left\|v\right\|_{X^{s-\theta,\theta}_\text{BO}(T)}  \\ + T^{\kappa}\left\|u\right\|_{X^{s+\frac{1}{2},b}_\text{S}(T)}\left\|v\right\|_{X^{-\theta,\theta}_\text{BO}(T)} + T^\kappa\left\|u\right\|_{X^{\frac{1}{2},b}_\text{S}(T)}^2\left\|u\right\|_{X^{s+\frac{1}{2},b}_\text{S}(T)}.
\end{multline}
Taking into account that 
\begin{equation}
    \label{dobleu} \|w(0)\|_{H^s}\lesssim \left(1+\|v_0\|_{L^2_x}\right)\|v_0\|_{H^s_x}, 
\end{equation}
and inequalities \eqref{e1'}, \eqref{apriori1} and \eqref{normacuadradou}, taking $T$ small enough, we get 
\begin{equation}
    \label{limite} \lim_{T\searrow0} N_T^s \lesssim \left(1+\|u_0\|_{H^{\frac{1}{2}}}+\|v_0\|_{L^2}\right)\left(\|u_0\|_{H^{s+\frac{1}{2}}}+\|v_0\|_{H^s}\right). 
\end{equation}
Moreover, from inequality \eqref{e1'}, 
\begin{equation}
    \label{Nu} \|u\|_{X^{s+\frac{1}{2},b}_{\text{S}}(T)}\lesssim \|u_0\|_{H^{s+\frac{1}{2}}}+T^\kappa \left(N_T^0\|v\|_{X^{s-\theta,\theta}_{\text{BO}}(T)}+N_T^s\|v\|_{X^{-\theta,\theta}_{\text{BO}}(T)}\right)+T^\kappa\left(N_T^0\right)^2N_T^s.
\end{equation}
By estimate \eqref{apriori2}, 
\begin{equation}
      \label{ast1}  \|v\|_{X^{s-\theta,\theta}_\text{BO}(T)} \lesssim \left(1+N_T^0\right)N_T^s.
\end{equation}
Applying this to the estimate \eqref{Nu}, 
\begin{equation}
    \label{Ast1} 
    \begin{split}
          &\|u\|_{X^{s+\frac{1}{2},b}_{\text{S}}(T)}\\\lesssim  &\|u_0\|_{H^{s+\frac{1}{2}}}+T^\kappa  \left(N_T^0\left(1+N^0_T\right)N^s_T+N_T^s\left(1+N_T^0\right)N_T^0\right)+\left(N_T^0\right)^2N_T^s\\
    \lesssim &\|u_0\|_{H^{s+\frac{1}{2}}}+ N_T^0\left(1+N_T^0\right)N_T^s.   
    \end{split}
\end{equation}

On the other hand, by estimate \eqref{apriori2}, we get 
\begin{align*}
    \|J^s_xv\|_{L^p_TL^q_x}&\lesssim \|v_0\|_{H^s} + T\delta^{-1} N_T^s + \left(1+N_T^0\right)\left(\|w\|_{Y^{s,\frac{1}{2}}(T)}+T^{1+\frac{1}{p}}N_T^0N_T^s\right)+TN_T^0N_T^s\\ 
    &\lesssim \|v_0\|_{H^s} + T\delta^{-1}N_T^s+N_T^0\left(1+N_T^0\right)N_T^s+N_T^0\|w\|_{Y^{s,\frac{1}{2}}(T)}+\|w\|_{Y^{s,\frac{1}{2}}(T)},
\end{align*}
where $(p,q)\in\{(\infty,2),(4,4)\}$. 
Now, taking into account that 
\begin{equation}
    \|w\|_{Y^{s,\frac{1}{2}}(T)}\lesssim \|w(0)\|_{H^s}+\left\|\chi_{[0,T]}\mathscr{N}_\delta (u,v,w)\right\|_{Y^{s,-\frac{1}{2}}}\lesssim N_T^s,
\end{equation}
we get 
\begin{equation}
   \label{ast2} \|J^s_x v\|_{L^q_TL^p_x} \lesssim \|v_0\|_{H^s}+T\delta^{-1}N_T^s+N_T^0\left(1+N_T^0\right)N_T^s+\|w\|_{Y^{s,\frac{1}{2}}(T)}.
\end{equation}
Now, by the bilinear estimates \eqref{normacuadradou} and \eqref{bilcha}, with \eqref{ast1},
\begin{equation}
    \label{Ast2}\left\|\chi_{[0,T]}\mathscr{N}_\delta(u,v,w)\right\|_{Y^{s,-\frac{1}{2}}}\lesssim N_T^0\left(1+N_T^0\right)N_T^s,
\end{equation}
and by \eqref{ast2}, we get 
\begin{equation}
    \label{ast3}
    \begin{split}
        \|J^s_xv\|_{L^p_TL^q_x}&\lesssim \|v_0\|_{H^s} + \left(1+\|v_0\|_{L^2}\right)\|v_0\|_{H^s} + T\delta^{-1} N_T^s + N_T^0\left(1+N_T^0\right)N_T^s \\
    &\lesssim \left(1+\|v_0\|_{L^2}\right)\|v_0\|_{H^s} + T\delta^{-1} N_T^s + N_T^0\left(1+N_T^0\right)N_T^s. 
    \end{split}
\end{equation}
Hence, by the definition of $N_T^s$, with estimates \eqref{ast3}, \eqref{Ast1}, \eqref{dobleu} and \eqref{Ast2}, there are constants $C_1, C_2, C_3>0$, depending only on $s$, such that 
\begin{multline*}
    N_T^s \leq C_1\left(1+\|u_0\|_{H^\frac{1}{2}}+\|v_0\|_{L^2}\right)\left(\|u_0\|_{H^{s+\frac{1}{2}}}+\|v_0\|_{H^s}\right)\\ + C_2T\delta^{-1}N_T^s+C_3N_T^0\left(1+N_T^0\right)N_T^s.
\end{multline*}
Fix any $T_0<\frac{\delta}{2C_2}$. For all $0<T<\text{min}(T^*,T_0)$, we get  
\begin{equation*}
    N_T^s\leq 2C_1\left(1+\|u_0\|_{H^\frac{1}{2}}+\|v_0\|_{L^2}\right)\left(\|u_0\|_{H^{s+\frac{1}{2}}}+\|v_0\|_{H^s}\right)+2C_3N_T^0\left(1+N^0_T\right)N_T^s.
\end{equation*}
Fix any $\varepsilon_0>0$ sufficiently small. Using the limit \eqref{limite} and a continuity argument, we obtain 
\begin{equation*}
    N_T^0 \lesssim \varepsilon
\end{equation*}
providing $\|u_0\|_{H^\frac{1}{2}}+\|v_0\|_{L^2}:=\varepsilon$, with $\varepsilon \ll\varepsilon_0$. Therefore, we obtain the following a priori estimate: 
\begin{equation}
    \label{aprioriNT} N_T^s \lesssim \|u_0\|_{H^{s+\frac{1}{2}}}+\|v_0\|_{H^s}. 
\end{equation}
 \subsection{Difference estimates} 
 Let $(u_1,v_1)$ and $(u_2,v_2)$ smooth solutions of the system \eqref{Schr-ILW-T} such that for $j\in\{1,2\}$, $\|u_j(0)\|_{H^{\frac{1}{2}}}+\|v_j(0)\|_{L^2}\leq \varepsilon \leq \varepsilon_0$, and $F_j$ the spatial primitive of $v_j$ is defined as in \eqref{primitive}. Let us suppose that $P_{\text{LO}}v_1(0)=P_{\text{LO}}v_2(0)$.  Moreover, define the associated gauge transformations by 
 \begin{equation*}
     W_j:= P_{+\text{hi}}\left(e^{-\frac{\nu}{2}iF_j}\right)\, \, \text{and} \, \, w_j:= \partial_x W_j,
 \end{equation*} 
Then, from the discussion of the previous subsection, we have 
 \begin{equation}
     \max_{j\in\{1,2\}}\|w_j\|_{Y^{0,\frac{1}{2}}(T)}\lesssim N_T^0(u_j,v_j,w_j) \lesssim \varepsilon \leq \varepsilon_0,
 \end{equation}
 and, 
  \begin{equation}
     \max_{j\in\{1,2\}}\|w_j\|_{Y^{s,\frac{1}{2}}(T)}\lesssim N_T^s(u_j,v_j) \lesssim \max_{j\in\{1,2\}}\left(\|u_j(0)\|_{H^{s+\frac{1}{2}}}+\|v_j(0)\|_{H^s}\right):=M.
 \end{equation}
 Let $\tilde{u}:= u_1-u_2$, $\tilde{v}:= v_1-v_2$, $\tilde{F}:= F_1-F_2$, $\tilde{W}:= W_1-W_2$ and $\tilde{w}:= w_1-w_2$. Hence,
 \begin{equation}
     \label{eclarga}\begin{split}
        &\partial_t\tilde{w}-\nu\mathscr{H}\partial_x^2\tilde{w}\\
     = &\,\,\nu^2\partial_xP_{+\text{hi}}\left(W_1P_-\partial_x\tilde{v}\right)+\nu^2\partial_xP_{+\text{hi}}\big(\tilde{W}P_-\partial_xv_2\big)\\
     &+\nu^2\partial_xP_{+\text{hi}}\left(P_{\text{lo}}e^{-\frac{\nu}{2}iF_1}P_-\partial_x\tilde{v}\right)+\nu^2\partial_xP_{+\text{hi}}\left(P_{\text{lo}}\left(e^{-\frac{\nu}{2}iF_1}-e^{-\frac{\nu}{2}iF_2}\right)P_-\partial_xv_2\right)\\
     &-\frac{\nu^2}{2}i\partial_xP_{+\text{hi}}\left(e^{-\frac{\nu}{2}iF_1} \mathscr{Q}_\delta \tilde{v}\right)-\frac{\nu^2}{2}i\partial_xP_{+\text{hi}}\left(\left(e^{-\frac{\nu}{2}iF_1} - e^{-\frac{\nu}{2}iF_2}\right)\mathscr{Q}_\delta v_2\right)\\
     &-\frac{\nu^2}{2}i\partial_xP_{+\text{hi}}\left(e^{-\frac{\nu}{2}iF_1} \left(|u_1|^2-|u_2|^2\right)\right)-\frac{\nu^2}{2}i\partial_xP_{+\text{hi}}\left(\left(e^{-\frac{\nu}{2}iF_1} - e^{-\frac{\nu}{2}iF_2}\right)|u_2|^2\right)\\:=& \tilde{\mathscr{N}}_\delta(\tilde{u}, \tilde{v}, \tilde{w}). 
     \end{split}
 \end{equation} Using the Duhamel formulation of \eqref{eclarga}, and estimates \eqref{2-3} and \eqref{2-7}, we get 
 \begin{equation}
     \label{tildew} \left\|\tilde{w}\right\|_{Y^{s,\frac{1}{2}}(T)}:= \left\|\tilde{w}(0)\right\|_{H^s}+\big\|\chi_{[0,T]}(t)\tilde{\mathscr{N}}_\delta(\tilde{u}, \tilde{v}, \tilde{w})\big\|_{Y^{s,-\frac{1}{2}}}. 
 \end{equation}
 First, we bound the first term on the right hand side of \eqref{tildew}. Indeed, 
 \begin{equation}
     \label{tildew2}
     \begin{split}
          \left\|\tilde{w}(0)\right\|_{H^s}
      = &\frac{|\nu|}{2}\left\|P_{+\text{hi}}\left(e^{-\frac{\nu}{2}iF_1(0)}\tilde{v}(0)+\left(e^{-\frac{\nu}{2}iF_1(0)}-e^{-\frac{\nu}{2}iF_2(0)}\right)v_2(0)\right)\right\|_{H^s}\\
      \lesssim &\left(1+\left\|v_1(0)\right\|_{L^2}\right)\left\|\tilde{v}(0)\right\|_{H^s}\\&\textcolor{white}{++}+\left(\left\|\tilde{v}(0)\right\|_{L^2}+\left\|e^{-\frac{\nu}{2}iF_1(0)}-e^{-\frac{\nu}{2}iF_2(0)}\right\|_{L^\infty}\left(1+\|v_1(0)\|_{L^2}\right)\right)\|v_2(0)\|_{H^s}\\
      \lesssim &\left\|\tilde{v}(0)\right\|_{L^2}+\big(\|\tilde{v}(0)\|_{L^2}+\|\tilde{F}(0)\|_{L^2}\big)\varepsilon, 
     \end{split}
 \end{equation}where the last inequality is due to the mean value property. Now, using Bernstein's inequality, we get
\begin{equation}
\begin{split}
    \big\|\tilde{F}(0)\big\|_{L^\infty}&\leq\big\|P_{\text{hi}}\tilde{F}(0)\big\|_{L^\infty}+\big\|P_{\text{lo}}\tilde{F}(0)\big\|_{L^\infty}\\&\lesssim \big\|\partial_xP_{\text{hi}}\tilde{F}(0)\big\|_{L^2}+\big\|P_{\text{lo}}\tilde{F}(0)\big\|_{L^\infty}= \big\|\tilde{v}(0)\big\|_{L^2}+\big\|P_{\text{lo}}\tilde{F}(0)\big\|_{L^\infty}.
\end{split}
          \label{tildeF1}
\end{equation} On the other hand, by construction we observe that since $F_j(x)-F_j(y)=\int_x^yv_j(z)\,dz$, 
 \begin{equation*}
     P_{\text{LO}}\int_x^y v_j\,dz = P_{\text{LO}}F_j(x)-F_j(y) \, \, \text{and} \, \, \int_x^y v_j\,dz = P_{\text{LO}}F_j(x)-P_{\text{LO}}F(y), 
 \end{equation*}
 which implies that 
 \begin{equation}
     P_{\text{lo}}\int_y^xP_{\text{LO}}v_j\,dz= P_{\text{LO}}\int_x^yv_j\,dz. 
 \end{equation}
 For that reason, 
 \begin{align*}
     P_{\text{lo}}\tilde{F}(0)=\int_\mathbb{R}\psi(y)P_{\text{lo}}\int_y^x\tilde{v}(0,z)\,dz\,dy = \int_\mathbb{R}\psi(y)P_{\text{lo}}\int_y^xP_{\text{LO}}\tilde{v}(0,z)\,dz\,dy=0,
 \end{align*}
 where the last equality is because we are assuming that $P_{\text{LO}}v_1(0)=P_{\text{LO}}v_2(0)$. By \eqref{tildeF1},
 \begin{equation}
    \label{tildeF2} \big\|\tilde{F}(0)\big\|_{L^\infty}\lesssim \left\|\tilde{v}(0)\right\|_{L^2},
 \end{equation}
 and applying this in \eqref{tildew2}, we get 
 \begin{equation}
     \label{tildew3}\left\|\tilde{w}(0)\right\|_{H^s}\lesssim \left\|\tilde{v}(0)\right\|_{L^2}+\left\|\tilde{v}(0)\right\|_{H^s}.
 \end{equation} 
 On the other hand, note that 
 \begin{align*}
     &\left\|\chi_{[0,T]}(t)\tilde{\mathscr{N}}_\delta(\tilde{u}, \tilde{v}, \tilde{w})\right\|_{Y^{s,-\frac{1}{2}}} \\ \lesssim  &\left\|\chi_{[0,T]}(t)\partial_xP_{+\text{hi}}\left(W_1P_-\partial_x\tilde{v}\right)\right\|_{Y^{s,-\frac{1}{2}}} + \left\|\chi_{[0,T]}(t)\partial_xP_{+\text{hi}}\left(\tilde{W}P_-\partial_xv_2\right)\right\|_{Y^{s,-\frac{1}{2}}}\\
    &\textcolor{white}{++++}+\left\|\chi_{[0,T]}(t)\partial_xP_{+\text{hi}}\left(P_{\text{lo}}e^{-\frac{\nu}{2}iF_1}P_-\partial_x\tilde{v}\right)\right\|_{Y^{s,-\frac{1}{2}}}\\&\textcolor{white}{++++}+\left\|\chi_{[0,T]}(t)\partial_xP_{+\text{hi}}\left(P_{\text{lo}}\left(e^{-\frac{\nu}{2}iF_1}-e^{-\frac{\nu}{2}iF_2}\right)P_-\partial_xv_2\right)\right\|_{Y^{s,-\frac{1}{2}}}\\
     &\textcolor{white}{++++}+\left\|\chi_{[0,T]}(t)\partial_xP_{+\text{hi}}\left(e^{-\frac{\nu}{2}iF_1} \mathscr{Q}_\delta \tilde{v}\right)\right\|_{Y^{s,-\frac{1}{2}}}\\&\textcolor{white}{++++}+\left\|\chi_{[0,T]}(t)\partial_xP_{+\text{hi}}\left(\left(e^{-\frac{\nu}{2}iF_1} - e^{-\frac{\nu}{2}iF_2}\right)\mathscr{Q}_\delta v_2\right)\right\|_{Y^{s,-\frac{1}{2}}}\\
     &\textcolor{white}{++++}+\left\|\chi_{[0,T]}(t)\partial_xP_{+\text{hi}}\left(e^{-\frac{\nu}{2}iF_1} \left(|u_1|^2-|u_2|^2\right)\right)\right\|_{Y^{s,-\frac{1}{2}}}\\&\textcolor{white}{++++}+\left\|\chi_{[0,T]}(t)\partial_xP_{+\text{hi}}\left(\left(e^{-\frac{\nu}{2}iF_1} - e^{-\frac{\nu}{2}iF_2}\right)|u_2|^2\right)\right\|_{Y^{s,-\frac{1}{2}}} \\:= &I + \cdots + VIII. 
 \end{align*}
By Proposition 3.5 of \cite{molinet-pilod},  
\begin{equation}
    \label{I0}I \lesssim \left\|w_1\right\|_{X^{s,\frac{1}{2}}_\text{BO}(T)}\left(T^\frac{1}{2}\left\|\tilde{v}\right\|_{L^\infty_TL^2_x}+\left\|\tilde{v}\right\|_{L^4_{T,x}}+\left\|\tilde{v}\right\|_{X^{-1,1}_\text{BO}(T)}\right).
\end{equation}
We now bound the last term inside the parentheses in \eqref{I0}. Note that $\tilde{v}$ satisfies 
\begin{equation*}
    \partial_t\tilde{v}-\nu\mathscr{H}\partial_x^2\tilde{v}+\frac{\nu^2}{2}\partial_x\left((v_1+v_2)\tilde{v}\right)= \nu \mathscr{Q}_\delta\partial_x\tilde{v}+\nu\partial_x\left(\tilde{u}\overline{u_1}+u_2\overline{\tilde{u}}\right). 
\end{equation*}
Arguing as in the proof of the estimate \eqref{apriori1}, we obtain 
\begin{multline}
    \label{tildev}\left\|\tilde{v}\right\|_{X^{s-\theta,\theta}_\text{BO}(T)}\lesssim \left\|\tilde{v}\right\|_{L^\infty_TL^2_x}+\left\|\tilde{v}\right\|_{L^\infty_TH^s_x} + \left\|\tilde{v}\right\|_{L^4_{T,x}}+\varepsilon\left\|\tilde{v}\right\|_{L^4_TW^{s,4}_x}\\+\left\|\tilde{u}\right\|_{X^{\frac{1}{2},b}_\text{S}(T)}+ \varepsilon\left\|\tilde{u}\right\|_{X^{s+\frac{1}{2},b}_\text{S}(T)}. 
\end{multline}
We now consider the first factor on the right hand side in \eqref{I0}. By \eqref{bilcha}, 
\begin{equation}
       \label{tildew_1}\left\|w_1\right\|_{X^{s,\frac{1}{2}}_\text{BO}(T)}
  \lesssim  (1+\varepsilon)\|v_1(0)\|_{H^s}+\varepsilon(1+\varepsilon)\|w_1\|_{X^{s,\frac{1}{2}}_\text{BO} (T)}+ \varepsilon^2.
\end{equation}
Applying inequalities \eqref{tildev} and \eqref{tildev} in \eqref{I0}, when $s=0$, we obtain 
\begin{equation}
    \label{I_0} I \lesssim \varepsilon\big(\left\|\tilde{v}\right\|_{L^\infty_TL^2_x}+\left\|\tilde{v}\right\|_{L^4_{T,x}}+\left\|\tilde{u}\right\|_{X^{{1/2},b}_\text{BO}(T)}\big),
\end{equation}
and, in the general case $s\geq0$, 
\begin{equation}
    \label{I} I \lesssim M\Big(\|\tilde{v}\|_{L^\infty_TL^2_x}+\|\tilde{v}\|_{L^4_{T,x}}+\|\tilde{u}\|_{X^{{1/2},b}_\text{BO}(T)}\Big). 
\end{equation}
By Proposition 3.5 in \cite{molinet-pilod}, 
\begin{equation}
    \label{II} II \lesssim \left\|\tilde{w}\right\|_{X^{s,{1/2}}_\text{BO}(T)}\big(T^{1/2}\|v_2\|_{L^\infty L^2_x}+\|v_2\|_{L^4_{T,x}}+\|v_2\|_{X^{-1,1}_\text{BO}(T)}\big)\lesssim \varepsilon\|\tilde{w}\|_{Y^{s,{1/2}}(T)}. 
\end{equation}
We now consider the term III. Indeed, there exists $0<\varepsilon'<\frac{1}{2}$ such that 
\begin{align*}
    III \lesssim \left\|\chi_{[0,T]}\,\partial_xP_{+\text{hi}}\left(P_{\text{lo}}e^{-\frac{\nu}{2}iF_1}P_-\partial_x\tilde{v}\right)\right\|_{X^{s,-\frac{1}{2}+\varepsilon'}_\text{BO}} \leq\left\|\partial_xP_{+\text{hi}}\left(P_{\text{lo}}e^{-\frac{\nu}{2}iF_1}P_-\partial_x\tilde{v}\right)\right\|_{L^2_T H^s_x}. 
\end{align*}
Taking into account that $\text{supp}\left[\partial_xP_{+\text{hi}}\left(P_{\text{lo}}e^{-\frac{\nu}{2}iF_1}P_-\partial_x\tilde{v}\right)\right]^{\wedge_x}\subseteq [1,2]$, we apply Bernstein's inequality, the fractional Leibniz rule \eqref{FLR'}, and Cauchy-Schwarz to obtain 
\begin{equation}
    III \lesssim \left(\int_0^T \left\|\partial_x P_{\text{lo}}e^{-\frac{\nu}{2}iF_1}\right\|_{L^4_x}^2\left\|\tilde{v}\right\|_{L^4_x}^2\,dt\right)^\frac{1}{2}\lesssim \|v_1\|_{L^4_{T,x}}\left\|\tilde{v}\right\|_{L^4_{T,x}} \lesssim \varepsilon \|\tilde{v}\|_{L^4_{T,x}}. \label{III}
\end{equation} Moreover, note that 
\begin{equation}
    \label{4f}
    \begin{split}
            IV &\lesssim \left(\int_0^T\left\|\partial_xP_{\text{lo}}\left(e^{-\frac{\nu}{2}iF_1}-e^{-\frac{\nu}{2}iF_2}\right)\right\|_{L^4_x}^2\|v_2\|_{L^4_x}^2\,dt\right)^\frac{1}{2}\\
    & = \frac{|\nu|}{2}\left(\int_0^T\left\|P_{\text{lo}}\left(e^{-\frac{\nu}{2}iF_1}\tilde{v}+\left(e^{-\frac{\nu}{2}iF_1}-e^{-\frac{\nu}{2}iF_2}\right)v_2\right)\right\|_{L^4_x}^2\|v_2\|_{L^4_x}^2\,dt\right)^\frac{1}{2}\\
    &\lesssim \left\|\tilde{v}\right\|_{L^4_{T,x}}\left\|v_2\right\|_{L^4_{T,x}} + \left\|e^{-\frac{\nu}{2}iF_1}-e^{-\frac{\nu}{2}iF_2}\right\|_{L^\infty_{T,x}}\left\|v_2\right\|_{L^4_{T,x}}\\
    &\lesssim \varepsilon \left\|\tilde{v}\right\|_{L^4_{T,x}} + \varepsilon^2 \big\|\tilde{F}\big\|_{L^\infty_{T,s}}.
    \end{split}
\end{equation}
Consider the second term in \eqref{4f}. First, note that $\tilde{F}$ satisfies 
\begin{equation}
    \label{tildesuperF} \partial_t\tilde{F}-\nu\mathscr{H}\partial_x^2\tilde{F}+\frac{\nu^2}{2}\left(v_1+v_2\right)\tilde{v}= \nu\mathscr{Q}_\delta\tilde{v}+\nu\left(\tilde{u}\overline{u}_1+u_2\overline{\tilde{u}}\right),
\end{equation}
whose integral associated equation is given by 
\begin{multline*}
    \tilde{F}(t)=V(t)\tilde{F}(0)-\frac{\nu^2}{2}\int_0^tV(t-t')(\left(v_1+v_2\right)\tilde{v})(t')dt'\\+\nu\int_0^tV(t-t')\mathscr{Q}_\delta\tilde{v}(t')dt' + \nu \int_0^t V(t-t')\left(\tilde{u}\overline{u}_1+u_2\overline{\tilde{u}}\right)(t')dt'.
\end{multline*}
Applying $P_\text{lo}$ in the last equation, and using Bernstein's and Young's inequalities, we get 
\begin{align*}
    \big\|P_\text{lo}\tilde{F}\big\|_{L^\infty_{T,x}}&\lesssim \int_0^T \left[\big\|\left(v_1+v_2\right)\tilde{v}\right\|_{L^1_x}+ \left\|\mathscr{Q}_\delta P_\text{lo}\tilde{v}\right\|_{L^2_x}+ \left\|P_\text{lo}\left(\tilde{u}\overline{u}_1+u_2\overline{\tilde{u}}\right)\right\|_{L^1_x}\big]dt'\\&:= IV_1+IV_2+IV_3.
\end{align*}
Note that
\begin{align*}
    &IV_1 \leq \int_0^T \left\|v_1+v_2\right\|_{L^2_x}\left\|\tilde{v}\right\|_{L^2_x}\,dt'\leq T\left\|v_1+v_2\right\|_{L^\infty_TL^2_x}\left\|\tilde{v}\right\|_{L^\infty_TL^2_x}\lesssim \varepsilon \left\|\tilde{v}\right\|_{L^\infty_TL^2_x}, \\
    &IV_2 \leq T^\frac{1}{2}\delta^{-1}\left\|\tilde{v}\right\|_{L^\infty_TL^2_x},
\end{align*}
and, 
\begin{align*}
    IV_3
    &\leq \int_0^T \left\|\tilde{u}\right\|_{L^2_x}\left\|u_1\right\|_{L^2_x}\,dt + \int_0^T \left\|u_2\right\|_{L^2_x}\left\|\tilde{u}\right\|_{L^2_x}\,dt\\
    &\leq T \left(\left\|\tilde{u}\right\|_{L^\infty_TL^2_x}\left\|u_1\right\|_{L^\infty_TL^2_x}+ \left\|\tilde{u}\right\|_{L^\infty_TL^2_x}\left\|u_2\right\|_{L^\infty_TL^2_x}\right) \lesssim \varepsilon \left\|\tilde{u}\right\|_{L^\infty_TL^2_x}\lesssim \varepsilon \left\|\tilde{u}\right\|_{X^{0,b}_S(T)}.
\end{align*}
Therefore, 
\begin{equation}
    \label{loF} \big\|P_\text{lo}\tilde{F}\big\|_{L^\infty_{T,x}}\lesssim \varepsilon \left\|\tilde{u}\right\|_{L^\infty_TL^2_x}. 
\end{equation}
On the other hand, by Bernstein inequality, 
\begin{equation}
    \label{hiF} \big\|P_\text{hi}\tilde{F}\big\|_{L^\infty_{T,x}}\lesssim  \big\|\partial_xP_\text{hi}\tilde{F}\big\|_{L^\infty_{T}L^2_x} \leq \left\|\tilde{v}\right\|_{L^\infty_T L^2_x}. 
\end{equation}
Hence, by estimates \eqref{loF} and \eqref{hiF}, we get 
\begin{equation}
   \label{Fimportante} \big\|\tilde{F}\big\|_{L^{\infty}_{T,x}} \lesssim \left\|\tilde{v}\right\|_{L^\infty_TL^2_x}+\varepsilon\left\|\tilde{u}\right\|_{X^{0,b}_\text{S}(T)}.
\end{equation}
Applying this to \eqref{4f}, we obtain 
\begin{equation}
    \label{IV}  IV \lesssim \varepsilon\Big(\left\|\tilde{v}\right\|_{L^\infty_TL^2_x}+\left\|\tilde{u}\right\|_{X^{\frac{1}{2},b}_\text{S}(T)}\Big).
\end{equation}
Now, for $0\leq s \leq \frac{1}{2}$, 
\begin{equation*}
     V\lesssim \left\|\partial_xP_{+\text{hi}}\left(e^{-\frac{\nu}{2}iF_1} \mathscr{Q}_\delta\tilde{v}\right)\right\|_{L^2_T H^s_x} \lesssim \big(1+\left\|v_1\right\|_{L^\infty_TL^2_x}\big)\big(\left\|v_1\mathscr{Q}_\delta\tilde{v}\right\|_{L^2_TH^s_x} + \left\|\mathscr{Q}_\delta\partial_x\tilde{v}\right\|_{L^2_TH^s_x}\big).
\end{equation*}
By the fractional Leibniz rule, Sobolev embedding, and Lemma \eqref{qdelta},
\begin{align*} 
    \left\|v_1\mathscr{Q}_\delta\tilde{v}\right\|_{L^2_TH^s_x} \lesssim T^\frac{1}{2}\delta^{-1}\left(1+\delta^{-1}\right)\left(\|v_1\|_{L^\infty_TH^s_x}+\|v_1\|_{L^\infty_TL^2_x}\right) \left\|\tilde{v}\right\|_{L^\infty_TL^2_x}.
\end{align*}
In the case $s=0$ we get 
\begin{equation*}
    \left\|v_1\mathscr{Q}_\delta\tilde{v}\right\|_{L^2_TL^2_x}\lesssim T^\frac{1}{2}\delta^{-1}\left(1+\delta^{-1}\right)\varepsilon \left\|\tilde{v}\right\|_{L^\infty_TL^2_x},
\end{equation*}
and when $0<s\leq\frac{1}{2}$, we have
\begin{equation*}
    \left\|v_1\mathscr{Q}_\delta\tilde{v}\right\|_{L^2_TH^s_x}\lesssim T^\frac{1}{2}\delta^{-1}\left(1+\delta^{-1}\right)\left\|\tilde{v}\right\|_{L^\infty_TL^2_x}.
\end{equation*}
On the other hand, 
\begin{equation*}
\left\|\mathscr{Q}_\delta\partial_x\tilde{v}\right\|_{L^2_TH^s_x}\leq\left\|\mathscr{Q}_\delta \tilde{v}\right\|_{L^2_TH^{s+1}_x}\lesssim T^\frac{1}{2}\delta^{-1}\left(1+\delta^{-s-1}\right)\left\|\tilde{v}\right\|_{L^\infty_T L^2_x}.
\end{equation*} Therefore, 
\begin{equation}
    \label{V} V\lesssim T^\frac{1}{2}\delta^{-1}\left(1+\delta^{-1}\right)\left\|\tilde{v}\right\|_{L^\infty_T L^2_x}. 
\end{equation} We now turn to the term $VI$. Indeed, 
\begin{align*}
    VI &\lesssim \left\|\partial_xP_{+\text{hi}}\left(\left(e^{-\frac{\nu}{2}iF_1}-e^{-\frac{\nu}{2}iF_2}\right)\mathscr{Q}_\delta v_2\right)\right\|_{L^2_T H^s_x}
    \\&\lesssim\left\|P_{+\text{hi}}\left(e^{-\frac{\nu}{2}iF_1}\tilde{v}\mathscr{Q}_\delta v_2\right)\right\|_{L^2_T H^s_x}+\left\|P_{+\text{hi}}\left(\left(e^{-\frac{\nu}{2}iF_1}-e^{-\frac{\nu}{2}iF_2}\right)v_2\mathscr{Q}_\delta v_2\right)\right\|_{L^2_TH^s_x}\\&\textcolor{white}{===========}+\left\|P_{+\text{hi}}\left(\left(e^{-\frac{\nu}{2}iF_1}-e^{-\frac{\nu}{2}iF_2}\right)\mathscr{Q}_\delta\partial_xv_2\right)\right\|_{L^2_TH^s_x}\\&:= VI_1+VI_2+VI_3. 
\end{align*}
By the fractional Leibniz rule, the Sobolev embedding and Lemma \eqref{qdelta},
\begin{align*}
    VI_1 &\lesssim \left(1+\|v_1\|_{L^\infty_TL^2_x}\right)\left\|\tilde{v}\mathscr{Q}_\delta v_2\right\|_{L^2H^s_x}\\
    &\lesssim T^\frac{1}{2}\delta^{-1}\left(1+\delta^{-1}\right)\left(1+\|v_1\|_{L^\infty_TL^2_x}\right)\left[\left\|\tilde{v}\right\|_{L^\infty_TH^s_x}\|v_2\|_{L^\infty_T L^2_x}+\left\|\tilde{v}\right\|_{L^\infty_TL^2_x}\left\|v_2\right\|_{L^\infty_T L^2_x}\right]\\
    &\lesssim T^\frac{1}{2}\delta^{-1}\left(1+\delta^{-1}\right)\left(\left\|\tilde{v}\right\|_{L^\infty_TH^s_x}+\left\|\tilde{v}\right\|_{L^\infty_TL^2_x}\right). 
\end{align*} On the other hand, using \eqref{fracexp},
\begin{align*}
    VI_2 &\lesssim \left(\left\|\tilde{v}\right\|_{L^\infty_TL^2_x}+\left\|e^{-\frac{\nu}{2}iF_1}-e^{-\frac{\nu}{2}iF_2}\right\|_{L^\infty_{T,x}}\left(1+\|v_1\|_{L^\infty L^2_x}\right)\right)\left\|v_2\mathscr{Q}_\delta v_2\right\|_{H^s_x}\\
    &\lesssim T^\frac{1}{2}\varepsilon\delta^{-1}\left(1+\delta^{-1}\right)\Big(\left\|\tilde{v}\right\|_{L^\infty_TH^s_x}+\left\|\tilde{v}\right\|_{L^\infty_TL^2_x}+\varepsilon\|\tilde{u}\|_{X^{\frac{1}{2},b}_\text{S}}\Big),
\end{align*}
and, 
\begin{align*}
    VI_3 \lesssim  \varepsilon T^\frac{1}{2} \delta^{-1}\left(1+\delta^{-1}\right) \Big(\left\|\tilde{v}\right\|_{L^\infty_TL^2_x}+\varepsilon\left\|\tilde{u}\right\|_{X^{\frac{1}{2},b}_\text{S}(T)}\Big).
\end{align*}
Hence, 
\begin{equation}
    \label{VI} VI \lesssim \varepsilon T^\frac{1}{2}\delta^{-1}\big(1+\delta^{-1}\big)\Big(\left\|\tilde{v}\right\|_{L^\infty_TL^2_x}+\left\|\tilde{v}\right\|_{L^\infty_TH^s_x}+\varepsilon\left\|\tilde{u}\right\|_{X^{\frac{1}{2},b}_\text{S}(T)}\Big).
\end{equation}
Now, 
\begin{align*}
    VII \lesssim \left\|\partial_xP_{+\text{hi}}\left(e^{-\frac{\nu}{2}iF_1}\tilde{u}u_1\right)\right\|_{L^2_TH^s_x}+\left\|\partial_xP_{+\text{hi}}\left(e^{-\frac{\nu}{2}iF_1}\overline{\tilde{u}}u_2\right)\right\|_{L^2_TH^s_x}:= VII_1+VII_2. 
\end{align*}
Observe that
\begin{align}
    VII_1 & \lesssim \left\|P_{+\text{hi}}\left(e^{-\frac{\nu}{2}iF_1}v_1\tilde{u}\overline{u}_1\right)\right\|_{L^2_TH^s_x}+\left\|P_{+\text{hi}}\left(e^{-\frac{\nu}{2}iF_1}\partial_x\left(\tilde{u}\overline{u}_1\right)\right)\right\|_{L^2_TH^s_x}\\
    &\lesssim \left(1+\|v_1\|_{L^\infty_T L^2_x}\right)\left(\left\|v_1\tilde{u}\overline{u}_1\right\|_{L^2_TH^s_x}+\left\|\partial_x\left(\tilde{u}\overline{u}_1\right)\right\|_{L^2_TH^s_x}\right). \label{VII_1}
\end{align}
Proceeding as in the proof of the estimate \eqref{normacuadradou}, we get in the case $s=0$,
\begin{align*}
    VII_1 \lesssim \varepsilon\left\|\tilde{u}\right\|_{X^{\frac{1}{2},b}_\text{S}(T)},
\end{align*}
and for when $0< s \leq \frac{1}{2}$, 
\begin{align*}
    VII_1 \lesssim \varepsilon \left\|\tilde{u}\right\|_{X^{s+\frac{1}{2},b}_\text{S}(T)} + \left\|\tilde{u}\right\|_{X^{\frac{1}{2},b}_\text{S}(T)}. 
\end{align*}
By symmetry of the argument we get the same bound for $VII_2$. Hence, when $s=0$, 
\begin{equation}
    \label{VII_0} VII\lesssim \varepsilon\left\|\tilde{u}\right\|_{X^{\frac{1}{2},b}_\text{S}(T)},
\end{equation}
and when $0< s \leq \frac{1}{2}$, 
\begin{equation}
    \label{VII} VII \lesssim \varepsilon \left\|\tilde{u}\right\|_{X^{s+\frac{1}{2},b}_\text{S}(T)} + \left\|\tilde{u}\right\|_{X^{\frac{1}{2},b}_\text{S}(T)}.
\end{equation} We turn to the term $VIII$. Note that  
\begin{align*}
    VIII&\lesssim \left\|P_{+\text{hi}}\left(\left(e^{-\frac{\nu}{2}iF_1}\tilde{v}+\left(e^{-\frac{\nu}{2}iF_1}-e^{-\frac{\nu}{2}iF_2}\right)v_2\right)|u_2|^2\right)\right\|_{L^2_TH^s_x}\\&\textcolor{white}{=============}+\left\|P_{+\text{hi}}\left(\left(e^{-\frac{\nu}{2}iF_1}-e^{-\frac{\nu}{2}iF_2}\right)\partial_x\left(|u_2|^2\right)\right)\right\|_{L^2_TH^s_x}\\
    &\leq  \left\|P_{+\text{hi}}\left(e^{-\frac{\nu}{2}iF_1}\tilde{v}|u_2|^2\right)\right\|_{L^2_TH^s_x}+\left\|P_{+\text{hi}}\left(\left(e^{-\frac{\nu}{2}iF_1}-e^{-\frac{\nu}{2}iF_2}\right)v_2|u_2|^2\right)\right\|_{L^2_TH^s_x}\\&\textcolor{white}{+++++++++}+\left\|P_{+\text{hi}}\left(\left(e^{-\frac{\nu}{2}iF_1}-e^{-\frac{\nu}{2}iF_2}\right)\partial_x\left(|u_2|^2\right)\right)\right\|_{L^2_TH^s_x}
\end{align*}
Proceeding again as in the proof of the estimate \eqref{normacuadradou}, but using the estimate \eqref{diffracexp}, for all $0<s\leq\frac{1}{2}$, we obtain
\begin{equation}
    \label{VIII} VIII \lesssim \varepsilon\Big(\left\|\tilde{v}\right\|_{L^\infty_T L^2_x}+\left\|\tilde{v}\right\|_{L^4_{T,x}}+\left\|\tilde{v}\right\|_{L^4_TW^{s,4}_x}+\left\|\tilde{u}\right\|_{X^{\frac{1}{2},b}_\text{S}(T)}\Big). 
\end{equation} Applying estimates \eqref{tildew3}, \eqref{I}, \eqref{II}, \eqref{III}, \eqref{IV}, \eqref{V}, \eqref{VI}, \eqref{VII} and \eqref{VIII} in \eqref{tildew}, we obtain 
\begin{multline}
   \label{estimativatildew} \|\tilde{w}\|_{Y^{s,\frac{1}{2}}(T)}\lesssim \left\|\tilde{v}(0)\right\|_{H^s_x}+\Big(\left||\tilde{v}\right\|_{L^\infty_TL^2_x}+\left\|\tilde{v}\right\|_{L^4_{T,x}}+\left\|\tilde{u}\right\|_{X^{\frac{1}{2},b}_\text{S}(T)}\Big)\\+\varepsilon\Big(\left||\tilde{v}\right\|_{L^\infty_TH^s_x}+\left\|\tilde{v}\right\|_{L^4_{T}W^{s,4}_x}+\left\|\tilde{u}\right\|_{X^{s+\frac{1}{2},b}_\text{S}(T)}\Big).
\end{multline}
For case $s=0$, instead of estimate \eqref{VII}, we take into account estimate \eqref{VII_0}, obtaining 
\begin{multline}
   \label{estimativatildew_0} \|\tilde{w}\|_{Y^{0,\frac{1}{2}}(T)}\lesssim \left\|\tilde{v}(0)\right\|_{H^s_x}\\+\varepsilon\Big(\left||\tilde{v}\right\|_{L^\infty_TL^2_x}+\left\|\tilde{v}\right\|_{L^4_{T,x}}+\left\|\tilde{u}\right\|_{X^{\frac{1}{2},b}_\text{S}(T)}\Big)+T^\frac{1}{2}\left\|\tilde{v}\right\|_{L^\infty_T L^2_x}. 
\end{multline} Now, let us move on to the bounding of $\left||\tilde{v}\right\|_{L^\infty_TH^s_x}, \left\|\tilde{v}\right\|_{L^4_{T}W^{s,4}_x}, \, \, \text{and}\, \, \, \left\|\tilde{u}\right\|_{X^{s+\frac{1}{2},b}_\text{S}(T)}$. First, we recall that for $(p,q)\in\left\{(\infty,2),(4,4)\right\}$ we have
\begin{equation}
   \label{Tildev} \left\|J^s_x\tilde{v}\right\|_{L^p_TL^q_x}\lesssim \left\|P_{\text{LO}}\tilde{v}\right\|_{L^p_TL^q_x}+\left\|D^s_x P_{+\text{HI}}\, \tilde{v}\right\|_{L^p_T L^q_x}.
\end{equation}
The following is the equation satisfied by $\tilde{v}:$
\begin{equation}
    \label{partialTildev}\partial_t\tilde{v}-\nu\mathscr{H}\partial_x^2\tilde{v}+\frac{\nu}{2}\partial_x\left(\left(v_1+v_2\right)\tilde{v}\right)= \nu\mathscr{Q}_\delta\partial_x\tilde{v}+\nu\partial_x\left(\tilde{u}\overline{u}_1+u_2\overline{\tilde{u}}\right). 
\end{equation}
Arguing as in the proof of estimate \eqref{PLOob}, we get 
\begin{equation}
\label{LOTi}\left\|P_{\text{LO}}\tilde{v}\right\|_{L^p_TL^q_x} \lesssim \varepsilon \Big(\left\|\tilde{v}\right\|_{L^\infty_TL^2_x}+\left\|\tilde{u}\right\|_{X^{\frac{1}{2},b}_\text{S}(T)}\Big) + T^{1+\frac{1}{p}}\delta^{-1}\left(1+\delta^{-1}\right)\left\|\tilde{v}\right\|_{L^\infty_TL^2_x}. 
\end{equation}
Note that 
\begin{align*}
    P_{+\text{HI}}\tilde{v}&=\frac{2}{\nu}iP_{+\text{HI}}\left(e^{\frac{\nu}{2}iF_1}\tilde{w}\right) + \frac{2}{\nu} iP_{+\text{HI}}\left(\left(e^{\frac{\nu}{2}iF_1}-e^{\frac{\nu}{2}iF_2}\right)w_2\right)\\&+\frac{2}{\nu}iP_{+\text{HI}}\left(P_{+\text{HI}}\left(e^{\frac{\nu}{2}iF_1}\right)\partial_xP_{-\text{hi}}\left(e^{-\frac{\nu}{2}iF_1}-e^{-\frac{\nu}{2}iF_2}\right)\right)\\&
    +\frac{2}{\nu}iP_{+\text{HI}}\left(P_{+\text{HI}}\left(e^{\frac{\nu}{2}iF_1}-e^{\frac{\nu}{2}iF_2}\right)\partial_xP_{-hi}\left(e^{-\frac{\nu}{2}iF_2}\right)\right)\\&+P_{+\text{HI}}\left(e^{\frac{\nu}{2}iF_1}P_{\text{lo}}\left(e^{-\frac{\nu}{2}iF_1}v_1-e^{-\frac{\nu}{2}iF_2}v_2\right)\right)\\&+ P_{+\text{HI}}\left(\left(e^{-\frac{\nu}{2}iF_1}-e^{-\frac{\nu}{2}iF_2}\right)P_\text{lo}\left(e^{-\frac{\nu}{2}iF_2}v_2\right)\right).
\end{align*} Proceeding as in the proof of estimate \eqref{DHI}, we get  
\begin{equation}
\label{DTildev}\left\|D^s_xP_{+\text{HI}}\,\tilde{v}\right\|_{L^p_TL^q_x}\lesssim \left\|\tilde{w}\right\|_{Y^{s,\frac{1}{2}}(T)}+\|w_2\|_{Y^{s,\frac{1}{2}}(T)}\left\|\tilde{v}\right\|_{L^\infty_TL^2_x}+\varepsilon\left\|\tilde{v}\right\|_{L^\infty_TL^2_x}.
\end{equation}
Let us bound the factor $\|w_2\|_{Y^{s,\frac{1}{2}}(T)}$. In fact, by \eqref{3gauge}, we get 
\begin{multline*}
    \|w_2\|_{Y^{s,\frac{1}{2}}(T)}\lesssim \left(1+\varepsilon\right)\|v_2(0)\|_{H^s}+\varepsilon(1+\varepsilon)\|w_2\|_{X^{s,\frac{1}{2}}_\text{BO}(T)}\\+\varepsilon^2 + \varepsilon\left(\|v_1\|_{L^4_TW^{s,4}_x}+(1+\varepsilon)\|u_1\|_{X^{s+\frac{1}{2}}_\text{S}(T)}\right)
\end{multline*}
which implies, for all $\varepsilon>0$ sufficiently small, 
\begin{equation}
    \label{222} \|w_2\|_{Y^{s,\frac{1}{2}}(T)}\lesssim \|v_2(0)\|_{H^s}+\varepsilon
\end{equation}
Applying \eqref{222} to \eqref{DTildev}, we get, in the case $s=0$
\begin{equation}
    \label{DTildev_1_0} \left\|P_{+\text{HI}}\,\tilde{v}\right\|_{L^p_TL^q_x}\lesssim \left\|\tilde{w}\right\|_{Y^{s,\frac{1}{2}}(T)}+\varepsilon\big(\left\|\tilde{v}\right\|_{L^\infty_TL^2_x}+\left\|\tilde{v}\right\|_{L^\infty_TL^2_x}\big),
\end{equation}
and for $0<s\leq\frac{1}{2}$,
\begin{equation}
    \label{DTildev_1} \left\|D^s_xP_{+\text{HI}}\,\tilde{v}\right\|_{L^p_TL^q_x}\lesssim \left\|\tilde{w}\right\|_{Y^{s,\frac{1}{2}}(T)}+M\left\|\tilde{v}\right\|_{L^\infty_TL^2_x}+\varepsilon\left\|\tilde{v}\right\|_{L^\infty_TL^2_x}.
\end{equation}
Applying estimates \eqref{DTildev_1} and \eqref{LOTi} in \eqref{Tildev}, we get 
\begin{multline}
    \label{estimateJs}\left\|J^s_x\tilde{v}\right\|_{L^p_TL^q_x}\lesssim \left\|\tilde{w}\right\|_{Y^{0,\frac{1}{2}}(T)}+\varepsilon\big(\left\|\tilde{v}\right\|_{L^\infty_TL^2_x}+\left\|\tilde{u}\right\|_{X^{{1/2},b}_\text{S}(T)}\big)\\+T^{1+\frac{1}{p}}\delta^{-1}\big(1+\delta^{-1}\big)\left\|\tilde{v}\right\|_{L^\infty_TL^2_x} + M\left\|\tilde{v}\right\|_{L^\infty_TL^2_x}. 
\end{multline}
Additionally, applying estimates \eqref{DTildev_1_0} and \eqref{LOTi} in \eqref{Tildev} with $s=0$, and then the estimate \eqref{estimativatildew_0}, we get
\begin{equation*}
\begin{split}
&\left\|\tilde{v}\right\|_{L^p_TL^q_x}\\\lesssim &\left\|\tilde{w}\right\|_{Y^{0,\frac{1}{2}}(T)}+\varepsilon\big(\left\|\tilde{v}\right\|_{L^\infty_TL^2_x}+\left\|\tilde{u}\right\|_{X^{{1/2},\,b}_\text{S}(T)}\big)+T^{1+\frac{1}{p}}\delta^{-1}\left(1+\delta^{-1}\right)\left\|\tilde{v}\right\|_{L^\infty_TL^2_x}\\
    \lesssim &\left\|\tilde{v}(0)\right\|_{L^2}+\varepsilon\big(\left\|\tilde{v}\right\|_{L^\infty_TL^2_x}+\left\|\tilde{v}\right\|_{L^4_{T,x}}+\left\|\tilde{u}\right\|_{X^{{1/2},\,b}_\text{S}(T)}\big)+T^{1+\frac{1}{p}}\delta^{-1}\left(1+\delta^{-1}\right)\left\|\tilde{v}\right\|_{L^\infty_TL^2_x}.
\end{split}
\end{equation*}
Then, taking $(p,q)=(\infty,2)$, for all $T>0$ sufficiently small, we get  
\begin{equation*}
    \left\|\tilde{v}\right\|_{L^\infty_TL^2_x}\lesssim \left\|\tilde{v}(0)\right\|_{L^2}+\varepsilon\big(\left\|\tilde{v}\right\|_{L^\infty_TL^2_x}+\left\|\tilde{v}\right\|_{L^4_{T,x}}+\left\|\tilde{u}\right\|_{X^{{1/2},\,b}_\text{S}(T)}\big)
\end{equation*}
which implies 
\begin{equation}
  \label{estimateJs1}  \left\|\tilde{v}\right\|_{L^\infty_TL^2_x}\lesssim \left\|\tilde{v}(0)\right\|_{L^2}+\varepsilon\big(\left\|\tilde{v}\right\|_{L^4_{T,x}}+\left\|\tilde{u}\right\|_{X^{{1/2},\,b}_\text{S}(T)}\big).
\end{equation}
Moreover, \eqref{estimateJs1} implies that
\begin{align*}
       \left\|\tilde{v}\right\|_{L^4_{T,x}}&\lesssim \left\|\tilde{v}(0)\right\|_{L^2}+\varepsilon\Big(\left\|\tilde{v}\right\|_{L^\infty_TL^2_x}+\left\|\tilde{u}\right\|_{X^{\frac{1}{2},b}_\text{S}(T)}\Big)+T^{1+\frac{1}{p}}\delta^{-1}\left(1+\delta^{-1}\right)\left\|\tilde{v}\right\|_{L^\infty_TL^2_x}\\
       &\lesssim  \left\|\tilde{v}(0)\right\|_{L^2}+\varepsilon\Big(\left\|\tilde{v}\right\|_{L^4_{T,x}}+\left\|\tilde{u}\right\|_{X^{\frac{1}{2},b}_\text{S}(T)}\Big),
\end{align*} 
which implies that for all $\varepsilon>0$ sufficiently small, 
\begin{equation}
    \label{estimateJs_2} \left\|\tilde{v}\right\|_{L^4_{T,x}} \lesssim \left\|\tilde{v}(0)\right\|_{L^2}+\varepsilon\left\|\tilde{u}\right\|_{X^{\frac{1}{2},b}_\text{S}(T)}.
\end{equation}
Applying \eqref{estimateJs_2} to \eqref{estimateJs1} we get 
\begin{equation}
        \label{estimateJs_1} \left\|\tilde{v}\right\|_{L^\infty_TL^2_x} \lesssim \left\|\tilde{v}(0)\right\|_{L^2}+\varepsilon\left\|\tilde{u}\right\|_{X^{\frac{1}{2},b}_\text{S}(T)}.
\end{equation} On the other hand, observe that $\tilde{u}$ satisfies the following equation:
\begin{equation}
  \label{tildeu}  i\left(\partial_t\tilde{u}-\nu\delta^{-1}\tilde{u}\right)+\partial_x^2\tilde{u}=\tilde{u}v_1+u_2\tilde{v}+\gamma\left(\overline{u}_1\left(u_1+u_2\right)\tilde{u}+u_2^2\overline{\tilde{u}}\right)
\end{equation} Applying the linear estimates in Bourgain spaces to the integral equation associated with \eqref{tildeu}, we get  
\begin{multline}
    \label{estimatetildeu} \left\|\tilde{u}\right\|_{X^{s+\frac{1}{2},b}_\text{S}(T)} \lesssim \left\|\tilde{u}(0)\right\|_{H^{s+\frac{1}{2}}}+\left\|\tilde{u}v_1\right\|_{X^{s+\frac{1}{2},a}_\text{S}(T)}+\left\|u_2\tilde{u}\right\|_{X^{s+\frac{1}{2},a}_\text{S}(T)}\\+\left\|\overline{u}_1\left(u_1+u_2\right)\tilde{u}\right\|_{X^{s+\frac{1}{2},a}_\text{S}(T)}+\left\|u_2^2\overline{\tilde{u}}\right\|_{X^{s+\frac{1}{2},a}_\text{S}(T)}
\end{multline}
Let us estimate each term on the right hand side in \eqref{estimatetildeu} (except the first one). By estimates \eqref{estimativa bilineal} and \eqref{apriori1}, 
\begin{align*}
    \left\|\tilde{u}v_1\right\|_{X^{s+\frac{1}{2},a}_\text{S}(T)} &\lesssim \left\|\tilde{u}\right\|_{X^{\frac{1}{2},b}_\text{S}(T)}\left\|v_1\right\|_{X^{s-\theta,\theta}_\text{BO}(T)}+\left\|\tilde{u}\right\|_{X^{s+\frac{1}{2},b}_\text{S}(T)}\left\|v_1\right\|_{X^{-\theta,\theta}_\text{BO}}\\
    &\lesssim \left\|\tilde{u}\right\|_{X^{\frac{1}{2},b}_\text{S}(T)}\Big(\|v_1\|_{L^\infty_TH^s_x}+\|v_1\|_{L^4_{T,x}}\|v_1\|_{L^4_{T}W^{s,4}_x}\\&+T^\frac{1}{2}\|u_1\|_{X^{\frac{1}{2},b}_\text{S}(T)}\left\|u_1\right\|_{X^{s+\frac{1}{2},b}_\text{S}(T)}\Big)+\left\|\tilde{u}\right\|_{X^{s+\frac{1}{2},b}_\text{S}(T)}\Big(\|v_1\|_{L^\infty_TL^2_x}+\|v_1\|_{L^4_{T,x}}\\&+T^\frac{1}{2}\|u_1\|_{X^{\frac{1}{2},b}_\text{S}(T)}^2\Big)\lesssim M\left\|\tilde{u}\right\|_{X^{\frac{1}{2},b}_\text{S}(T)}+\varepsilon\left\|\tilde{u}\right\|_{X^{s+\frac{1}{2},b}_\text{S}(T)}.
\end{align*}
In the particular case $s=0$, we get 
\begin{equation}
     \label{tildeu1_0}\left\|\tilde{u}v_1\right\|_{X^{\frac{1}{2},a}_\text{S}(T)}\lesssim \varepsilon \left\|\tilde{u}\right\|_{X^{\frac{1}{2},b}_\text{S}(T)}. 
\end{equation}
Using the estimates \eqref{estimativa bilineal} and \eqref{apriori1} again, we get 
\begin{align*}
    \left\|u_2\tilde{v}\right\|_{X^{s+\frac{1}{2},a}_\text{S}(T)}
    \lesssim  \left\|\tilde{v}\right\|_{L^\infty_TH^s_x}+\left\|\tilde{v}\right\|_{L^4_{T,x}}+\varepsilon\left\|\tilde{v}\right\|_{L^4_TW^{s,4}_x}+\left\|\tilde{u}\right\|_{X^{\frac{1}{2},b}_\text{S}(T)}+\varepsilon\left\|\tilde{u}\right\|_{X^{s+\frac{1}{2},b}_\text{S}(T)}.
\end{align*}In the case $s=0$,
\begin{align*}
    \left\|u_2\tilde{v}\right\|_{X^{\frac{1}{2},a}_\text{S}(T)} \lesssim \varepsilon\Big(\left\|\tilde{v}\right\|_{L^\infty_TL^2_x}+\left\|\tilde{v}\right\|_{L^4_{T,x}}+\left\|\tilde{u}\right\|_{X^{\frac{1}{2},b}_\text{S}(T)}\Big).
\end{align*}
By the estimate \eqref{trilineal},
\begin{align*}
\left\|\overline{u_1}\left(u_1+u_2\right)\tilde{u}\right\|_{X^{s+\frac{1}{2},a}_\text{S}(T)}+\left\|u_2^2\overline{\tilde{u}}\right\|_{X^{s+\frac{1}{2},a}_\text{S}(T)}\lesssim \varepsilon\Big(\left\|\tilde{u}\right\|_{X^{\frac{1}{2},b}_\text{S}(T)}+\left\|\tilde{u}\right\|_{X^{s+\frac{1}{2},b}_\text{S}(T)}\Big).
\end{align*}
Therefore, 
\begin{multline}
    \label{Tildeu}\left\|\tilde{u}\right\|_{X^{s+\frac{1}{2},b}_\text{S}(T)} \lesssim \left\|\tilde{u}(0)\right\|_{H^{s+\frac{1}{2}}}+\left\|\tilde{u}\right\|_{X^{\frac{1}{2},b}_\text{S}(T)}+\varepsilon\left\|\tilde{u}\right\|_{X^{s+\frac{1}{2},b}_\text{S}(T)}\\+\left\|\tilde{v}\right\|_{L^\infty_TL^2_x}+ \left\|\tilde{v}\right\|_{L^\infty_TH^s_x}+\left\|\tilde{v}\right\|_{L^4_{T,x}}+ \varepsilon\left\|\tilde{v}\right\|_{L^4_TW^{s,4}_x},
\end{multline}
and in the case $s=0$, 
\begin{align*}
    \left\|\tilde{u}\right\|_{X^{\frac{1}{2},b}_\text{S}(T)} &\lesssim \left\|\tilde{u}(0)\right\|_{H^\frac{1}{2}}+\varepsilon\big(\left\|\tilde{v}\right\|_{L^\infty_TL^2_x}+\left\|\tilde{v}\right\|_{L^4_{T,x}}+\left\|\tilde{u}\right\|_{X^{\frac{1}{2},b}_\text{S}(T)}\big)\\
    &\lesssim  \left\|\tilde{u}(0)\right\|_{H^\frac{1}{2}} + \varepsilon \left\|\tilde{u}\right\|_{X^{\frac{1}{2},b}_\text{S}(T)},
\end{align*}
where the last inequality is due to the estimates \eqref{estimateJs_1} and \eqref{estimateJs_2}. Hence, for all $\varepsilon>0$ sufficiently small, 
\begin{equation}
    \label{tildeu_0} \left\|\tilde{u}\right\|_{X^{\frac{1}{2},b}_\text{S}(T)} \lesssim \left\|\tilde{u}(0)\right\|_{H^\frac{1}{2}}. 
\end{equation}
Applying \eqref{tildeu_0} to \eqref{estimateJs_1} and \eqref{estimateJs_2}, we get 
\begin{equation}
    \label{all_0} \left\|\tilde{v}\right\|_{L^\infty_TL^2_x}, \left\|\tilde{v}\right\|_{L^4_{T,x}}, \left\|\tilde{u}\right\|_{X^{\frac{1}{2},b}_\text{S}(T)} \lesssim \left\|\tilde{u}(0)\right\|_{H^\frac{1}{2}} + \left\|\tilde{v}(0)\right\|_{L^2_x}. 
\end{equation} We now return to the general case $\left\|\tilde{v}\right\|_{L^\infty_TH^s_x}, \, \, \left\|\tilde{v}\right\|_{L^4_{T}W^{s,4}_x}, \, \, \text{and} \, \, \left\|\tilde{u}\right\|_{X^{s+\frac{1}{2},b}_\text{S}(T)}$: applying \eqref{all_0} on \eqref{estimativatildew} we get 
\begin{equation}
\label{mastilew}\left\|\tilde{w}\right\|_{Y^{s,\frac{1}{2}}(T)} \lesssim \left\|\tilde{v}(0)\right\|_{H^s}+\left\|\tilde{u}(0)\right\|_{H^\frac{1}{2}}+\varepsilon\big(\left\|\tilde{v}\right\|_{L^\infty_TH^s_x}+\left\|\tilde{v}\right\|_{L^4_TW^{s,4}_x}+\left\|\tilde{u}\right\|_{X^{s+\frac{1}{2},b}_\text{S}(T)}\big).
\end{equation}
Combining \eqref{all_0} with \eqref{mastilew} in \eqref{estimateJs}, we get 
\begin{multline*}
    \left\|\tilde{v}\right\|_{L^\infty_TH^s_x}+\left\|\tilde{v}\right\|_{L^4_{T}W^{s,4}_x} \lesssim  \left\|\tilde{v}(0)\right\|_{H^s}+\left\|\tilde{u}(0)\right\|_{H^\frac{1}{2}}\\+\varepsilon\big(\left\|\tilde{v}\right\|_{L^\infty_TH^s_x}+\left\|\tilde{v}\right\|_{L^4_TW^{s,4}_x}+\left\|\tilde{u}\right\|_{X^{s+\frac{1}{2},b}_\text{S}(T)}\big),
\end{multline*}
and then, for $\varepsilon>0$ sufficiently small, 
\begin{equation}
     \label{Jsall}\left\|\tilde{v}\right\|_{L^\infty_TH^s_x}+\left\|\tilde{v}\right\|_{L^4_{T}W^{s,4}_x} \lesssim  \left\|\tilde{v}(0)\right\|_{H^s}+\left\|\tilde{u}(0)\right\|_{H^\frac{1}{2}} + \varepsilon\left\|\tilde{u}\right\|_{X^{s+\frac{1}{2},b}_\text{S}(T)}.
\end{equation} Applying \eqref{all_0} and \eqref{Jsall} in \eqref{Tildeu}, we get 
\begin{equation*}
    \left\|\tilde{u}\right\|_{X^{s+\frac{1}{2},b}_\text{S}(T)} \lesssim \left\|\tilde{u}(0)\right\|_{H^{s+\frac{1}{2}}}+ \left\|\tilde{v}(0)\right\|_{H^s} + \varepsilon\left\|\tilde{u}\right\|_{X^{s+\frac{1}{2},b}_\text{S}(T)}.
\end{equation*}
For $\varepsilon>0$ sufficiently small, 
\begin{equation*}
    \left\|\tilde{u}\right\|_{X^{s+\frac{1}{2},b}_\text{S}(T)} \lesssim  \left\|\tilde{u}(0)\right\|_{H^{s+\frac{1}{2}}}+ \left\|\tilde{v}(0)\right\|_{H^s}.
\end{equation*}
Then, using \eqref{Jsall} and \eqref{mastilew}, we get the following estimate: 
\begin{equation}
\label{key diff}\left\|\tilde{v}\right\|_{L^\infty_TH^s_x}+\left\|\tilde{v}\right\|_{L^4_{T}W^{s,4}_x}+  \left\|\tilde{u}\right\|_{X^{s+\frac{1}{2},b}_\text{S}(T)} + \left\|\tilde{w}\right\|_{Y^{s,\frac{1}{2}}(T)}\lesssim  \left\|\tilde{u}(0)\right\|_{H^{s+\frac{1}{2}}}+ \left\|\tilde{v}(0)\right\|_{H^s}. 
\end{equation}
\subsection{Well-Posedness}
Let $\left(u_0,v_0\right)\in B_{\varepsilon_1} \cap \Big(H^{s+\frac{1}{2}}(\mathbb{R})\times H^s(\mathbb{R})\Big)$, with $s\geq0$. Consider the sequence $\left\{\left(u_{0,j},v_{0,j}\right)\right\}\subset H^\infty(\mathbb{R})\times H^\infty(\mathbb{R})$ defined in frequencies by 
\begin{equation}
    \widehat{u_{0,j}}(\xi)=\chi_{[-j,j]}(\xi)\widehat{u_0}(\xi), \, \, \text{and}, \, \, \widehat{v_{0,j}}(\xi)=\chi_{[-j,j]}(\xi)\widehat{v_0}(\xi). 
\end{equation}
Note that $\left(u_{0,j},v_{0,j}\right)\rightarrow(u_0,v_0)$ when $j\rightarrow\infty$. Applying Proposition \ref{Th1} to $\left(u_{0,j},v_{0,j}\right)$, and taking into account that $\left(u_0,v_0\right)\in B_{\varepsilon_1}$, there is any $T>0$ such that for each $j$ there is a unique solution $\left(u^j,v^j\right)\in C\left([0,T];H^\infty(\mathbb{R})\times H^\infty(\mathbb{R})\right)$ of the system \eqref{Schr-ILW-T}, with $\left(u^j(0),v^j(0)\right)=\left(u_{0,j},v_{0,j}\right)$. Moreover, since $P_{LO}v_0^j=P_{LO}v_0$ for all $j$ sufficiently big, by \eqref{key diff} we have 
\begin{multline}
    \left\|v^j-v^\ell\right\|_{L^\infty_TH^s_x}+\left\|v^j-v^\ell\right\|_{L^4_{T}W^{s,4}_x}+  \left\|u^j-u^\ell\right\|_{X^{s+\frac{1}{2},b}_\text{S}(T)}\\ + \left\|w^j-w^\ell\right\|_{Y^{s,\frac{1}{2}}(T)}\lesssim  \left\|u^j_0-u^\ell_0\right\|_{H^{s+\frac{1}{2}}}+  \left\|v^j_0-v^\ell_0\right\|_{H^s}.
\end{multline}
Therefore, the sequence converges strongly in $X^{s+\frac{1}{2}}_\text{S}(T)\times \left(L^\infty_T H^{s}_x \cap L^4_TW^{s,4}_x\right)$ to some function 
\begin{equation}
(u,v)\in C\Big([0,T];H^{s+\frac{1}{2}}(\mathbb{R})\times H^s(\mathbb{R})\Big). 
\end{equation}
Due to these strong convergences, from the estimates derived in the previous subsection it can be concluded that 
$(u,v)$ satisfies the system \eqref{Schr-ILW-T}. Moreover, from the conservation laws \eqref{C2}, \eqref{C3} and \eqref{C4}, when $(u_0,v_0)\in H^1(\mathbb{R})\times H^\frac{1}{2}(\mathbb{R})$ with a small enough norm, the solution $(u,v)$ can be extended for all time $T>0$.  
Uniqueness of solutions and continuity of the solution map for the system \eqref{Schr-ILW-T} follow from \eqref{key diff}. Finally, by undoing the Galilean transformation \eqref{galileana}, we obtain the well-posedness result of the system \eqref{Schr-ILW}. This concludes the proof of Theorem \ref{Th2}. \\ 
\begin{remark} \label{scalingch}
    We recall that the proof of the local and global well-posedness of the system \eqref{Schr-ILW-T} depends on the assumption of a small norm. Well-posedness for arbitrary large data is generally obtained by a scaling argument, but the system \eqref{Schr-ILW-T} does not enjoy this property. However, we can perform the following change of variables  
    \begin{equation*}
        u_\lambda(t,x):= \lambda u(\lambda^2t,\lambda x), \, \, \, v_\lambda(t,x):= \lambda v(\lambda^2t,\lambda x),
    \end{equation*}
    for all $0<\lambda\leq1$. Then $\left(u_\lambda,v_\lambda\right)$ is a solution of the following system:
    \begin{equation}
        \label{Schr-ILW-T-S} 
        \begin{cases}
             i\left(\partial_tu_\lambda-\nu\delta^{-1}\partial_xu_\lambda\right) + \partial_x^2u_\lambda= \lambda\alpha u_\lambda v_\lambda+\gamma\,|u_\lambda|^2u_\lambda \\ 
             \partial_tv_\lambda-\nu\,\mathscr{H}\partial_x^2v_\lambda+\rho\, v_\lambda\partial_xv_\lambda=\nu \mathscr{Q}_{\lambda\delta}\partial_xv_\lambda+\beta\left(|u_\lambda|^2\right)_x
        \end{cases}
        \,\,\,\,\,\,(t,x) \in \mathbb{R}^2.
    \end{equation} Since 
    \begin{equation*}
        \left\|u_\lambda(0,\cdot)\right\|_{H^{s+\frac{1}{2}}}\lesssim \lambda^\frac{1}{2}\Big(1+\lambda^{s+\frac{1}{2}}\Big)\left\|u(0)\right\|_{H^{s+\frac{1}{2}}},
    \end{equation*}
    and 
        \begin{equation*}
        \left\|v_\lambda(0,\cdot)\right\|_{H^{s}}\lesssim \lambda^\frac{1}{2}\left(1+\lambda^{s}\right)\left\|v(0)\right\|_{H^{s}},
    \end{equation*}
for all $\varepsilon_1>0$ we can always choose $\lambda$ small enough such that 
\begin{equation*}
    \left\|\left(u_\lambda(0,\cdot),v_\lambda(0,\cdot)\right)\right\|\leq \varepsilon_1.
\end{equation*}
Moreover, suppose that there exists $\varepsilon_1$ such that for all $0<\lambda\leq1$ there exists a solution $\left(u_\lambda,v_\lambda\right)$ of \eqref{Schr-ILW-T-S} with 
\begin{equation*}
    \left\|\left(u_\lambda(0,\cdot),v_\lambda(0,\cdot)\right)\right\|_{H^{s+\frac{1}{2}}\times H^s}\leq \varepsilon_1,
\end{equation*}
with time of existence $T_\lambda$. Then we obtain, letting 
\begin{equation*}
    \left(u(t,x),v(t,x)\right):=\left(\lambda^{-1}u_\lambda\left(\lambda^{-2}t,\lambda^{-1}x,\lambda^{-1}v\left(\lambda^{-2}t,\lambda^{-1}x\right)\right)\right)
\end{equation*}
a solution of \eqref{Schr-ILW-T} with time of existence $T\gtrsim\lambda^{2}T_\lambda$. Note that the condition of smallness of the solutions appearing in the apriori and nonlinear estimates must be independent of $\lambda\in(0,1)$. This idea was used by Zaiter for the Ostrovsky equation in \cite{zaiter2007remarks}.
\end{remark}
\section{Appendix: Energy Estimates}
\subsection{Energy Estimates for the Solutions of \eqref{Schr-ILW}}
Let $(u,v)\in C\big([0,T]\,;\,H^{s+\frac{1}{2}} (\mathbb{R})\times H^s(\mathbb{R})\big)$ be a solution of the system \eqref{Schr-ILW}, where $s> \frac{1}{2}$ and $T>0$. In this case, the usual energy $E^s_c (t)$ can be defined for all $t\in [0,T]$ by
 \begin{equation*}
    E^s_c(t) := \left\|u(t)\right\|_{L^2_x}^2+\big\|D^{s+\frac{1}{2}}_xu(t)\big\|_{L^2_x}^2+\left\|v(t)\right\|_{L^2_x}^2+\frac{1}{2}\left\|D^s_xv(t)\right\|_{L^2_x}^2.
\end{equation*}  For the solutions of \eqref{Schr-BO ext}, the energy estimates were worked in \cite{linares2024well}. Since $\mathscr{G}_\delta$ and $\mathscr{H}$ are skew-symmetric, in the proof of Proposition 3.1 of \cite{linares2024well}, $\mathscr{G}_\delta$ can take the role of $\mathscr{H}$. Therefore, the inequalities $(3.6)$, $(3.7)$, $(3.8)$,  $(3.9)$, $(3.10)$ and $(3.13)$ of \cite{linares2024well} are still valid for solutions of \eqref{Schr-ILW}. As a consequence, we have the following estimate:
\begin{equation}
    \label{classic energy} \frac{d}{dt}E_{c}^s (t) \lesssim \left\|v_x\right\|_{L^\infty_x}\left\|v\right\|_{H^s_x}^2+\left\|u\right\|_{H^{s+\frac{1}{2}}_x}^2\left\|v\right\|_{H^s_x}+\left\|u\right\|_{H^{s+\frac{1}{2}}_x}^4+ J + K,
 \end{equation}
 where 
 \begin{equation*}
     J:=\beta\int_{\mathbb{R}}D^s_x v \, \partial_x D^s_x\left(|u(t)|^2\right)\,dx \, \, \, \, \text{and} \, \, \, \,   K:= 2\alpha\,Im\int_{\mathbb{R}}u D^{s+1}_x\overline{u}\,D^s_x v\,dx.
 \end{equation*} The terms $J$ and $K$ in inequality \eqref{classic energy} do not seem to be easy to estimate. Instead, as in the proof of Proposition 3.1 of \cite{linares2024well}, we introduce an additional term in the usual energy from which $I$ and $II$ can be controlled. Indeed, applying $D^{s-\frac{1}{2}}_x$ to the second equation of \eqref{Schr-ILW}, multiplying by $D^{s-\frac{1}{2}}_x\left(|u|^2\right)$, and integrating over $\mathbb{R}$, we get 
\begin{multline*}
    \int_{\mathbb{R}}D^{s-\frac{1}{2}}_x\left(|u|^2\right) \partial_tD^{s-\frac{1}{2}}_x v\,dx = \nu\int_{\mathbb{R}}D^{s-\frac{1}{2}}_x\left(|u|^2\right) \mathscr{G}_\delta\partial_x^2D^{s-\frac{1}{2}}_x v\,dx\\-\frac{\rho}{2}\int_{\mathbb{R}}D^{s-\frac{1}{2}}_x\left(|u|^2\right)\partial_x D^{s-\frac{1}{2}}_x\left(v^2\right)\,dx,
\end{multline*}
which implies, 
\begin{equation}
    \label{multilinea}
    \begin{split}
        &\frac{d}{dt}\int_{\mathbb{R}}D^{s-\frac{1}{2}}_x\left(|u|^2\right) D^{s-\frac{1}{2}}_x v\,dx\\ = &\nu\int_{\mathbb{R}}D^{s-\frac{1}{2}}_x\left(|u|^2\right) \mathscr{G}_\delta\partial_x^2D^{s-\frac{1}{2}}_x v\,dx-\frac{\rho}{2}\int_{\mathbb{R}}D^{s-\frac{1}{2}}_x\left(|u|^2\right)\partial_x D^{s-\frac{1}{2}}_x\left(v^2\right)\,dx
    \\+ &\int_{\mathbb{R}}D^{s-\frac{1}{2}}_x\partial_t \left(|u|^2\right)D^{s-\frac{1}{2}}_x v\,dx := I + II + III. 
    \end{split}
\end{equation} With these considerations in mind, we define the modified energy $E^s_m(t)$ by \begin{equation*} E_{m}^s(t):= E^s_c(t)+\int_{\mathbb{R}} D^{s-\frac{1}{2}}_x v(t)\,D^{s-\frac{1}{2}}_x\left(|u(t)|^2\right)\,dx.
\end{equation*}Then, adding the inequalities \eqref{classic energy} and \eqref{multilinea}, we get 
\begin{equation}
    \label{modified energy} \frac{d}{dt}E^s_m(t) \lesssim \left\|v_x\right\|_{L^\infty_x}\left\|v\right\|_{H^s_x}^2+\left\|u\right\|_{H^{s+\frac{1}{2}}_x}^2\left\|v\right\|_{H^s_x}+\left\|u\right\|_{H^{s+\frac{1}{2}}_x}^4+ J + K + I + II + III. 
\end{equation}  Since $H^r(\mathbb{R})$ is a Banach algebra for all $r>{1/2}$, 
\begin{align*}
    II = -\frac{\rho}{2} \int_{\mathbb{R}} D^s_x(|u|^2)\partial_x D^{s-1}_x v\,dx \lesssim\|D^s_x(|u|^2)\|_{L^2_x}\|\partial_x D^{s-1}_x v\|_{L^2_x} \lesssim \|u\|_{H^s_x}^2\left\|v\right\|_{H^s_x}. 
\end{align*} On the other hand, using the first equation of \eqref{Schr-ILW}, 
\begin{align*}
    III &= 2 \text{Re} \int_{\mathbb{R}}D^{s-\frac{1}{2}}_x v\, D^{s-\frac{1}{2}}_x \left(\overline{u}u_t\right)\,dx\\
        &= -2\text{Im}\int_{\mathbb{R}} D^{s-\frac{1}{2}}_x v\, D^{s-\frac{1}{2}}_x \left(\overline{u}u_{xx}\right)\,dx\\
        &=-2\text{Im}\int_{\mathbb{R}} D^{s-\frac{1}{2}}_x v\, D^{s-\frac{1}{2}}_x \left(\overline{u}u_{xx}+\overline{u}_x u_x-|u_x|^2\right)\,dx\\
        &=-2\text{Im}\int_{\mathbb{R}} D^{s-\frac{1}{2}}_x v\, D^{s-\frac{1}{2}}_x\partial_x\left(\overline{u}u_{x}\right)\,dx =2\text{Im}\int_{\mathbb{R}} D^{s-\frac{1}{2}}_x \partial_x v\, D^{s-\frac{1}{2}}_x\left(\overline{u}u_{x}\right)\,dx.
\end{align*}Now, using $\partial_x=-D_x\mathscr{H}$,
\begin{align*}
    III &=-2\text{Im}\int_{\mathbb{R}} D^{s+\frac{1}{2}}_x \mathscr{H}v\, D^{s-\frac{1}{2}}_x\left(\overline{u}u_{x}\right)\,dx\\
        &=2\text{Im}\int_{\mathbb{R}} D^{s}_x v\, \mathscr{H}D^{s}_x\left(\overline{u}u_{x}\right)\,dx\\
        &=2\text{Im}\int_{\mathbb{R}} D^{s}_x v\, \left[\mathscr{H}D^{s}_x;\overline{u}\right]u_x\,dx+2\text{Im} \int_{\mathbb{R}}\left(D^s_x v\right)\overline{u}\mathscr{H}\partial_xD^s_xu\,dx\\
        &=2\text{Im}\int_{\mathbb{R}} D^{s}_x v\, \left[\mathscr{H}D^{s}_x;\overline{u}\right]u_x\,dx+2\text{Im} \int_{\mathbb{R}}\left(D^s_x v\right)\,\overline{u}\,D^{s+1}_xu\,dx := III_1+III_2.
\end{align*}
Making $\alpha=1$, we get $K+III_2=0$. Using Cauchy-Schwarz, Lemma \ref{LiLeibniz} and Sobolev embedding,
\begin{align*}
    III_1&\leq \left\|D^s_x v\right\|_{L^2_x}\left\|\left[\mathscr{H}D^{s}_x;\overline{u}\right]u_x\right\|_{L^2_x}\\
        &\lesssim \left\|D^s_x v\right\|_{L^2_x} \left\|D^{s-1}_x \partial_x\overline{u}\right\|_{L^{\infty-}_x}\left\|\partial_x u\right\|_{L^{2+}_x}\\
        &\lesssim \left\|D^s_x v\right\|_{L^2_x} \left\|D^{\frac{1}{2}+}D^{s-1}_x \partial_x\overline{u}\right\|_{L^{2}_x}\left\|\partial_x u\right\|_{L^{2+}_x}\lesssim \left\|v\right\|_{H^s_x} \left\|u\right\|_{H^{s+\frac{1}{2}}_x}^2.
\end{align*}
Finally, making $\beta=\nu$, we obtain 
\begin{align*}
    J+I &= \nu\int_{\mathbb{R}}D^s_x v \, \partial_x D^s_x\left(|u(t)|^2\right)\,dx+\nu\int_{\mathbb{R}}D^{s-\frac{1}{2}}_x\left(|u|^2\right) \mathscr{G}_\delta\partial_x^2D^{s-\frac{1}{2}}_x v\,dx\\
        &= \nu\int_{\mathbb{R}}D^{s+\frac{1}{2}}_x v \, \partial_x D^{s-\frac{1}{2}}_x\left(|u(t)|^2\right)\,dx-\nu\int_{\mathbb{R}}\partial_xD^{s-\frac{1}{2}}_x\left(|u|^2\right) \mathscr{G}_\delta\partial_x D^{s-\frac{1}{2}}_x v\,dx\\
        &= \nu\int_{\mathbb{R}}\partial_x D^{s-\frac{1}{2}}_x(|u(t)|^2)\big(D^{s+\frac{1}{2}}_x v - \mathscr{G}_\delta D^{s-\frac{1}{2}}_x \partial_xv\big)\, \,dx\\
        &\lesssim \|u\|_{H^{s+\frac{1}{2}}}^2\|F_\delta\|_{L^\infty(\mathbb{R})}\|v\|_{H^{s-\frac{1}{2}}_x},
\end{align*}
where
\begin{equation*}
    F_\delta(\xi):= |\xi|-\xi\coth(\delta\xi)+\frac{1}{\xi}, \, \, \,  \xi\in\mathbb{R}\setminus\{0\}.
\end{equation*} Now, we show that $F_\delta\in L^{\infty}(\mathbb{R})$. In fact, for all $\xi>0$, 
\begin{equation*}
    F_\delta(\xi)= \xi-\xi\coth(\delta\xi)+\frac{1}{\delta}, 
\end{equation*}
and then
\begin{equation*}
    F_\delta^{'}(\xi)= \frac{\left(4\delta\xi-2\right)e^{2\delta\xi}+2}{\left(e^{2\delta\xi}-1\right)^2}\geq0,
\end{equation*}
which implies that $F_\delta \nearrow$ in $(0,\infty)$. Furthermore, since $\left|F_\delta(\xi)\right|\leq|\xi|$, we get the limit $\lim_{\xi\rightarrow0}\,F_\delta(\xi)=0$. Then, $F_\delta\geq0$ in $(0,\infty)$. In fact, $F_\delta\geq0$ in $\mathbb{R}\setminus\{0\}$ because $F_\delta$ is even. Since $\lim_{\xi\rightarrow\infty}\,F_\delta(\xi)=\frac{1}{\delta}$, we conclude that $F_\delta\in L^{\infty}(\mathbb{R})$. Moreover, $\|F_\delta\|_{L^\infty(\mathbb{R})}=\frac{1}{\delta}$. Returning to the inequality \eqref{modified energy}, we obtain 
\begin{equation}
    \label{yacasi1}\frac{d}{dt}E^s_m(t) \lesssim \left\|v_x\right\|_{L^\infty_x}\left\|v\right\|_{H^s_x}^2+\left\|u\right\|_{H^{s+\frac{1}{2}}_x}^2\left\|v\right\|_{H^s_x}+\left\|u\right\|_{H^{s+\frac{1}{2}}_x}^4.
\end{equation} Therefore, we have established the following result: \\

\begin{proposition}
\label{cinco}    Let $s>\frac{1}{2}$ and $T>0$. In system \eqref{Schr-ILW}, suppose that $\alpha=1$ and $\beta=\gamma$. There exist constants $c_s,\kappa_{1,s}>0$ such that for any $(u,v)\in C\big([0,T]\,;H^{s+\,{1/2}}(\mathbb{R})\times H^{s}(\mathbb{R})\big)$ solution of \eqref{Schr-ILW} satisfying 
    \begin{equation}
        \label{peqsol}\left\|(u(t),v(t))\right\|_{H^{s+\frac{1}{2}}_x\times H^{s}_x} \leq {c_s}
    \end{equation}
    the following estimates hold true:
    \begin{equation}
        \label{coercivity} \frac{1}{2}\Big(\left\|u(t)\right\|_{H^{s+\frac{1}{2}}_x}+\left\|v(t)\right\|_{H^{s}_x}\Big)\leq  E_{m}^s(t)\leq \frac{3}{2}\Big(\left\|u(t)\right\|_{H^{s+\frac{1}{2}}_x}+\left\|v(t)\right\|_{H^{s}_x}\Big),
    \end{equation}
    and, 
        \begin{equation}
        \label{energy estimate}  \partial_tE_{m}^s(t)\lesssim \Big(1+\left\|\partial_xv(t)\right\|_{L^{\infty}_x}\Big)E_{m}^s(t).
    \end{equation}
Moreover,
\begin{equation}
    \label{consequence} \sup_{t\in [0,T]}\left\|(u(t),v(t))\right\|_{H^{s+\frac{1}{2}}_x\times H^{s}_x} \leq 2e^{\kappa_{1,s}\big(T+\left\|\partial_x v\right\|_{L^1_T\,L^{\infty}_x}\big)}\left\|\left(u(0),v(0)\right)\right\|_{H^{s+\frac{1}{2}}_x\times H^{s}_x}. 
\end{equation}
\end{proposition}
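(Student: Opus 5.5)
The plan is to treat Proposition \ref{cinco} as a consequence of the differential inequality \eqref{yacasi1}. What remains is to show coercivity of $E^s_m$ under the smallness assumption \eqref{peqsol}, and then to close with Gronwall.

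\textbf{Step 1: coercivity \eqref{coercivity}.} The classical part $E^s_c$ is equivalent to $\|u\|_{H^{s+\frac12}}^2+\|v\|_{H^s}^2$, with absolute constants. I will read \eqref{coercivity} as a statement about these squared norms, adjusting constants if needed. The only extra piece is the cross term $\int D^{s-\frac12}_x v\,D^{s-\frac12}_x(|u|^2)\,dx$. By Cauchy--Schwarz and $s-\frac12\le s$ it is bounded by $\|v\|_{H^s}\,\||u|^2\|_{H^{s-\frac12}}$. Since $s>\frac12$, the algebra property (or simply $H^{s+\frac12}\subset H^{s-\frac12}$ together with the algebra property of $H^{s+\frac12}$) gives $\||u|^2\|_{H^{s-\frac12}}\lesssim\|u\|_{H^{s+\frac12}}^2$. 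The cross term is therefore $\lesssim \|v\|_{H^s}\|u\|_{H^{s+\frac12}}^2\le c_s\,\|u\|_{H^{s+\frac12}}^2$. Choosing $c_s$ small absorbs it, which yields the two-sided bound with factors $\frac12$ and $\frac32$.

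\textbf{Step 2: energy estimate \eqref{energy estimate}.} Each term on the right of \eqref{yacasi1} is compared with $E^s_m$. The first term, $\|v_x\|_{L^\infty}\|v\|_{H^s}^2$, is bounded by $\|v_x\|_{L^\infty}E^s_m$. For the second, $\|u\|^2_{H^{s+\frac12}}\|v\|_{H^s}\le c_s\|u\|^2_{H^{s+\frac12}}\lesssim E^s_m$. For the third, $\|u\|^4_{H^{s+\frac12}}\le c_s^2\|u\|^2_{H^{s+\frac12}}\lesssim E^s_m$. Each time, Step 1 is used to pass from norms to $E^s_m$. I also need to check that the derivation of \eqref{yacasi1} applies at this regularity. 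Here one should note that the cancellations $K+III_2=0$ and the $J+I$ bound via $\|F_\delta\|_{L^\infty}=\delta^{-1}$ used the parameter choices $\alpha=1$ and $\beta=\nu$; the statement's ``$\beta=\gamma$'' should be read accordingly. I will also justify the time differentiation by first working with smooth solutions, using the appendix local theory, and then passing to the limit through continuous dependence.

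\textbf{Step 3: Gronwall and \eqref{consequence}.} Integrating $\partial_t E^s_m\le \kappa(1+\|\partial_x v\|_{L^\infty})E^s_m$ gives $E^s_m(t)\le e^{\kappa(t+\|\partial_xv\|_{L^1_tL^\infty_x})}E^s_m(0)$. Converting back with \eqref{coercivity} at times $t$ and $0$ produces the factor $\frac{3/2}{1/2}$; taking square roots where the energy is quadratic gives \eqref{consequence} with constant $2$, after adjusting $\kappa_{1,s}$ or applying coercivity with sharper constants $1\pm\frac14$ obtained by shrinking $c_s$.

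The main obstacle is not any single estimate. It is the bookkeeping: smallness must be assumed for all $t\in[0,T]$, as \eqref{peqsol} does, so that the absorptions in Steps 1--2 hold uniformly in time; and the constant $2$ has to be matched by choosing $c_s$ small enough.
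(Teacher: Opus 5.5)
Your proposal is correct and follows the paper, which derives \eqref{yacasi1} and then simply asserts the proposition. Your three steps are exactly what the paper leaves implicit: bounding the cross term $\int D^{s-\frac12}_x v\,D^{s-\frac12}_x(|u|^2)\,dx\lesssim \|v\|_{H^s}\|u\|_{H^{s+\frac12}}^2$ and absorbing it using \eqref{peqsol}, comparing each term of \eqref{yacasi1} with $E^s_m$, and applying Gronwall. Your readings of \eqref{coercivity} in squared norms and of $\beta=\gamma$ as $\beta=\nu$ are the right corrections.

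One caution: do not justify the time differentiation through Proposition \ref{Th1}. That result is itself built on this proposition, so the argument would be circular, and it only covers $s>\frac32$ anyway. Instead, carry out the computation for $H^\infty$ (or parabolically regularized) solutions, which is all the appendix local theory requires.
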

\subsection{Energy Estimates for the Difference of Two Solutions of \eqref{Schr-ILW}}
Let $(u_1,v_1)$ and $(u_2,v_2)$ two solutions of the system \eqref{Schr-ILW}. We define 
\begin{align*}
    & w:= u_1-u_2 \, \, \, \, \, u:= u_1+u_2 \\
    & z:= v_1-v_2 \, \, \, \, \, \, \, \, v:= v_1+v_2.
\end{align*}
Using $u_1=\frac{1}{2}(u-w)$, $u_2=\frac{1}{2}(u-w)$, $v_1=\frac{1}{2}(v+z)$ and $v_1=\frac{1}{2}(v-z)$, we find that $(w,z)$ is a solution of the following system:  
    \begin{equation}
        \label{Schr-ILW difference} 
        \begin{cases}
             iw_t + w_{xx}= \frac{1}{2} (vw+uz)+\gamma\left(\frac{1}{4}|w|^2w+\frac{1}{2}|u|^2w+\frac{1}{4}u^2\overline{w}\right)  \\ 
             z_t-\nu\,\mathscr{G}_{\delta}z_{xx}+\frac{\rho}{2}(vz)_x\,=\beta\,\partial_x\text{Re}(\overline{u}w)
        \end{cases}
        \,\,\,\,\,\,(t,x) \in \mathbb{R}^2.
    \end{equation}
The aim of this subsection is to derive energy estimates on $(w,z)$. As in the last subsection, let us define the following modified energies $(w,z)$: 
\begin{equation*}
    \tilde{E}_m^0(t):= \|w(t)\|_{L^2_x}^2+\big\|D^{1/2}_xw(t)\big\|_{L^2_x}^2+\frac{1}{2}\|z(t)\|_{L^2_x}^2+\text{Re}\int_\mathbb{R}J^{-1}_xz\left(\overline{u}w\right)(t)\,dx,
\end{equation*}
and 
\begin{multline*}
    \tilde{E}_m^s(t):= \|w(t)\|_{L^2_x}^2+\big\|D^{s+\,{1/2}}_xw(t)\big\|_{L^2_x}^2+\|z(t)\|_{L^2_x}^2+\frac{1}{2}\left\|D^s_xz\right\|_{L^2_x}^2\\+\text{Re}\int_\mathbb{R}D^{s-\,{1/2}}_xz(t)D^{s-\,{1/2}}_x\left(\overline{u}w\right)(t)\,dx,
\end{multline*}
for all $t\geq0$. In subsections \ref{eel0} and \ref{eels} we derive the desired estimates for $\tilde{E}_m^0(t)$ and $\tilde{E}_m^s(t)$, respectively.  

\subsubsection{\label{eel0}Energy Estimates at the level $s=0$} 
In  Proposition 3.2 of \cite{linares2024well}, the energy estimates for the difference of two solutions of \eqref{Schr-BO ext} were obtained. In fact, proceeding as in the proof of inequalities $(3.22)$, $(3.23)$, $(3.24)$,  $(3.25)$, $(3.26)$, $(3.27)$ and $(3.28)$ in \cite{linares2024well}, we obtain the following estimate:
\begin{equation}
    \label{classic tilde energy 0} \frac{d}{dt}\tilde{E}_{c}^0 (t) \lesssim \left\|v_x\right\|_{L^\infty_x}\left\|z\right\|_{H^s_x}^2+\left\|w\right\|_{H^{\frac{1}{2}}_x}^2+\left\|w\right\|_{H^\frac{1}{2}_x}\left\|z\right\|_{L^2_x}+ J + K,
 \end{equation}
 where 
 \begin{equation*}
     J:=\nu\text{Re}\int_\mathbb{R}z\left(\overline{u}w\right)_x\,dx,\, \, \, \text{and}\, \, \, K:=\text{Im}\int_\mathbb{R}\big(D^{1/2}_x\overline{w}\big)u\big(D^{1/2}_xz\big)\,dx.
 \end{equation*}
This terms cannot be estimated directly; instead, they will be compensated for by a correction coming from a modified energy term. We follow \cite{linares2024well}. In fact, note that 
\begin{equation*}
    J = \nu\int_\mathbb{R}\overline{u}w\mathscr{H}D_xz\,dx.
\end{equation*}
On the other hand, from the second equation of \eqref{Schr-ILW difference},
\begin{align*}
    &\partial_tz+\nu\mathscr{G}_\delta D^2_xz+\frac{\rho}{2}\left(vz\right)_x=\nu\partial_x\text{Re}\left(\overline{u}w\right)\\
    \implies &\partial_tJ^{-1}_xz+\nu\mathscr{G}_\delta D^2_xJ^{-1}_xz+\frac{\rho}{2}\partial_xJ^{-1}_x\left(vz\right)=\nu\text{Re}\partial_xJ^{-1}_x\left(\overline{u}w\right)\\
    \implies &\text{Re}\int_\mathbb{R}\overline{u}w\partial_tJ^{-1}_xz\,dx+\nu\text{Re}\int_\mathbb{R}\overline{u}w\mathscr{G}_\delta D^2_xJ^{-1}_xz\,dx+\frac{\rho}{2}\text{Re}\int_\mathbb{R}\overline{u}w\partial_xJ^{-1}_x(vz)\,dx=0.
\end{align*}
Therefore, 
\begin{multline}
    \label{0,v}\frac{d}{dt}\text{Re}\int_\mathbb{R}\overline{u}wJ^{-1}_xz\,dx = -\nu\text{Re}\int_\mathbb{R}\overline{u}w\mathscr{G}_\delta D^2_xJ^{-1}_xz\,dx-\frac{\rho}{2}\text{Re}\int_\mathbb{R}\overline{u}w\partial_xJ^{-1}_x(vz)\,dx\\+\text{Re}\int_\mathbb{R}\partial_t\left(\overline{u}w\right)J^{-1}_xz\,dx := II_1+II_2+II_3.
\end{multline}
Note that
\begin{align*}
J+II_1=\nu\text{Re}\int_\mathbb{R}\overline{u}w\left(\mathscr{H}D_x-\mathscr{G}_\delta D^2_xJ^{-1}_x\right)z\,dx
    &\leq |\nu|\left\|\overline{u}w\right\|_{L^2_x}\left\|D_x\left(\mathscr{H}-D_xJ^{-1}_x\mathscr{G}_\delta\right)z\right\|_{L^2_x}\\
    &\leq |\nu|\|u\|_{L^\infty_x}\|w\|_{L^2_x}\big\|\tilde{F}\big\|_{L^\infty}\|z\|_{L^2_x},
\end{align*}
where 
\begin{align*}
    \tilde{F}(\xi):= |\xi|\bigg|\frac{|\xi|}{\sqrt{1+\xi^2}}\Big(\coth{(\delta\xi)-\frac{1}{\delta\xi}}\Big)-\sgn(\xi)\bigg|, \, \,\,\, \xi\in \mathbb{R}. 
\end{align*}
Since $F$ is a even function, obseving that 
\begin{align*}
\lim_{\xi\rightarrow\infty}\big(\xi\coth{(\delta\xi)-\sqrt{1+\xi^2}}\big)
    = \lim_{\xi\rightarrow\infty}\xi\big(\coth{(\delta\xi)-1}\big)+\lim_{\xi\rightarrow\infty}\big(\xi-\sqrt{1+\xi^2}\big) =0,
\end{align*}
we get $\big\|\tilde{F}\big\|_{L^\infty}<\infty$. Hence, 
\begin{equation*}
    J+II_1\lesssim \|u\|_{L^\infty_x}\|z\|_{L^2_x}\|w\|_{L^2_x}.
\end{equation*}
Now, let us proceed again as in the proof of Proposition 3.2 in \cite{linares2024well}. Indeed, by Cauchy-Schwarz, 
\begin{equation*}
    II_2\lesssim \left\|\overline{u}w\right\|_{L^2_x}\left\|\partial_xJ^{-1}_x(vz)\right\|_{L^2_x}\lesssim \|u\|_{L^\infty_x}\|w\|_{L^2_x}\|v\|_{L^\infty_x}\|z\|_{L^2_x}. 
\end{equation*}
On the other hand, 
\begin{equation}
\label{porra}II_3=\text{Re}\int_\mathbb{R}\overline{u}_twJ^{-1}_xz\,dx+\text{Re}\int_\mathbb{R}\overline{u}w_tJ^{-1}_xz\,dx:= II_{3,1}+II_{3,2}.
\end{equation}
Using the first equation of \eqref{Schr-ILW} in $(u_1,v_1)$ and $(u_2,v_2)$, we get 
\begin{multline*}
II_{3,1}=\text{Im}\int_\mathbb{R}\overline{u}_{xx}wJ^{-1}_xz\,dx-\text{Im}\int_\mathbb{R}\left(\overline{u_1}v_1+\overline{u_2}v_2\right)wJ^{-1}_xz\,dx\\-\gamma\text{Im}\int_\mathbb{R}\left(|u_1|^2\overline{u_1}+|u_2|^2\overline{u_2}\right)wJ^{-1}_x\,dx:= II_{3,1,1}+II_{3,1,2}+II_{3,1,3}.
\end{multline*}
Using  $\partial_x\left(\overline{u}_xw\right)=\overline{u}_{xx}w+\overline{u}_xw_x$, we obtain 
\begin{align*}
    II_{3,1,1}&=\text{Im}\int_\mathbb{R}\partial_x\left(\overline{u}_xw\right)J^{-1}_xz\,dx-\text{Im}\int_\mathbb{R}\overline{u}_xw_xJ^{-1}_xz\,dx\\
    &= -\text{Im}\int_\mathbb{R}\overline{u}_xwJ^{-1}_x\partial_xz\,dx-\text{Im}\int_\mathbb{R}\overline{u}_xw_xJ^{-1}_xz\,dx:= II_{3,1,1,1}+II_{3,1,1,2}.
\end{align*}
Using Cauchy-Schwarz and Lemma \ref{LiLeibniz}, 
\begin{align*}
    II_{3,1,1,1}\leq \|\overline{u}_xw\|_{L^2_x}\|J^{-1}_x\partial_xz\|_{L^2_x}&\leq \big\|\partial_xu\big\|_{L^{2+}_x}\|w\|_{L^{\infty-}_x}\|z\|_{L^2_x}
    \lesssim \|u\|_{H^{s+\,{1/2}}_x}\|w\|_{H^{1/2}_x}\|z\|_{L^2_x}. 
\end{align*}
The contribution $II_{3,1,1,2}$ will be compensated by a term coming from $II_{3,2}$. Now, by the second equation of \eqref{Schr-ILW difference}, we obtain 
\begin{multline}
    \label{porra2} II_{3,2}=-\text{Im}\int_\mathbb{R}\overline{u}w_{xx}J^{-1}_xz\,dx+\frac{1}{2}\text{Im}\int_\mathbb{R}\overline{u}vwJ^{-1}_xz\,dx\\ +\gamma\text{Im}\int_\mathbb{R}\overline{u}\Big(\frac{1}{4}|w|^2w+\frac{1}{2}|u|^2w+\frac{1}{4}u^2\overline{w}\Big)J^{-1}_xz\,dx:= II_{3,2,1}+II_{3,2,2}+II_{3,2,3}.
\end{multline}
We decompose $II_{3,2,1}$ as follows: 
\begin{equation*}
II_{3,2,1}=\text{Im}\int_\mathbb{R}\overline{u}_xw_xJ^{-1}_xz\,dx-\text{Im}\int_\mathbb{R}\partial_x\left(\overline{u}w_x\right)J^{-1}_xz\,dx:= II_{3,2,1,1}+II_{3,2,1,2}.
\end{equation*}
Note that $II_{3,2,1,1}+II_{3,1,1,2}=0$. On the other hand, 
\begin{align*}
    II_{3,1,1,2} &= \text{Im}\int_\mathbb{R}\overline{u}w_xJ^{-1}_x\partial_xz\,dx\\
    &= -\text{Im}\int_\mathbb{R}\overline{u}\mathscr{H}D^{1/2}_xD^{1/2}_xwJ^{-1}_x\partial_xz\,dx\\
    &= \text{Im}\int_\mathbb{R}\big(\big[\mathscr{H}D^{1/2}_x;\overline{u}\big]D^{1/2}_xw \big)J^{-1}_xz\,dx-\text{Im}\int_\mathbb{R}\mathscr{H}D^{1/2}_x\big(\overline{u}D^{1/2}_xw\big)J^{-1}_x\partial_xz\,dx\\
    &= II_{3,1,1,2,1}+II_{3,1,1,2,2}.
\end{align*}
By Cauchy-Schwarz and Lemma \eqref{LiLeibniz},
\begin{align*}
    II_{3,1,1,2,1} \lesssim\|J^{-1}_x\partial_xz\|_{L^2_x}\|D^{-{1/2}}_x\partial_x\overline{u}\|_{L^\infty_x}\|D^{1/2}_xw\|_{L^2_x}
    \lesssim \|u\|_{H^{s+\frac{1}{2}}_x}\|z\|_{L^2_x}\|w\|_{H^\frac{1}{2}_x}.
\end{align*}
Additionally, 
\begin{align*}
    II_{3,1,1,2,2} = \text{Im}\int_\mathbb{R}\mathscr{H}D^\frac{1}{2}_x\Big(\overline{u}D^\frac{1}{2}_xw\Big)\mathscr{H}D_xJ^{-1}_x\,dx &= \text{Im}\int_\mathbb{R}\overline{u}D^\frac{1}{2}_xwD^\frac{3}{2}_xJ^{-1}_x\,dx\\
    &= -\text{Im}\int_\mathbb{R}\left(D^\frac{1}{2}_x\overline{w}\right)uD^\frac{3}{2}_xJ^{-1}_xz\,dx.
\end{align*}
Hence, 
\begin{align*}
K+II_{3,1,1,2,2}&=\text{Im}\int_\mathbb{R}\left(D^\frac{1}{2}_x\overline{w}\right)u\left(D^\frac{1}{2}_x-D^\frac{3}{2}_xJ^{-1}_x\right)z\,dx\\
    &\leq \|u\|_{L^\infty_x}\left\|D^\frac{1}{2}_xw\right\|_{L^2_x}\left\|\left(D^\frac{1}{2}_x-D^\frac{3}{2}_xJ^{-1}_x\right)z\right\|_{L^2_x}\lesssim \|u\|_{H^s_x}\|w\|_{H^\frac{1}{2}_x}\|z\|_{L^2_x},
\end{align*}
where the last inequality holds because the function 
\begin{equation*}
    G(\xi):= |\xi|^\frac{1}{2}-|\xi|^\frac{3}{2}\left(1+\xi^2\right)^{-\frac{1}{2}} = \sqrt{\frac{|\xi|}{1+\xi^2}}\frac{1}{\sqrt{1+\xi^2}+|\xi|} \in L^\infty_\xi.
\end{equation*}
 Finally,
\begin{align*}
    II_{3,2,2}\lesssim \|u\|_{L^\infty_x}\|v\|_{L^\infty_x}\|w\|_{L^2_x}\left\|J^{-1}_xz\right\|_{L^2_x}\lesssim\|u\|_{H^s_x}\|v\|_{H^s_x}\|w\|_{L^2_x}\|z\|_{L^2_x},
\end{align*}
and
\begin{align*}
    II_{3,2,2}\lesssim \left(\|u_1\|_{H^s_x}+\|u_2\|_{H^s_x}\right)^3\|w\|_{L^2_x}\|z\|_{L^2_x}.
\end{align*}
Therefore, we have proved the following result. \\
\begin{proposition}
  \label{meia}  Let $T>0$. There exists a positive constant $\tilde{c}_s$ such that for any $(u_1,v_1),(u_2,v_2)\in C\big([0,T];H^{s+{1/2}}(\mathbb{R})\times H^s(\mathbb{R})\big)$ solutions of \eqref{Schr-ILW} satisfying 
    \begin{equation*}
        \|(u_i(t),v_i(t))\|_{H^{s+{1/2}}_x\times H^s_x} \leq \tilde{c_s}, \, \, i\in\{1,2\},
    \end{equation*}
    the following estimates hold true:
    \begin{equation}
        \frac{1}{2}\big(\|w(t)\|^2_{H^{{1/2}}_x}+\|z(t)\|_{L^2_x}^2\big)\leq\tilde{E}_m^0(t)\leq\frac{3}{2}\big(\|w(t)\|^2_{H^{\frac{1}{2}}_x}+\|z(t)\|_{L^2_x}^2\big),
    \end{equation}
    and 
    \begin{equation}
        \frac{d}{dt}\tilde{E}^0_m(t)\lesssim \Big(1+\left\|\partial_xv_1(t)\right\|_{L^\infty_x}+\left\|\partial_xv_2(t)\right\|_{L^\infty_x}\Big)\tilde{E}^0_m(t).
    \end{equation}
\end{proposition}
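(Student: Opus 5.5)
The proof would follow the template of the preceding subsection (Proposition 3.1 of \cite{linares2024well} adapted as in Proposition \ref{cinco}), now at the $L^2\times H^{1/2}$ level for the difference $(w,z)$. There are two claims to prove: the coercivity of $\tilde{E}^0_m$ and the differential inequality.

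\textbf{Coercivity.} The only non-quadratic term in $\tilde{E}^0_m$ is the correction $\text{Re}\int J^{-1}_xz\,(\overline{u}w)\,dx$. By Cauchy--Schwarz and the embedding $H^{s+\frac12}\hookrightarrow L^\infty$ (valid since $s>0$),
\[
\Big|\text{Re}\int_{\mathbb{R}} J^{-1}_xz\,\overline{u}w\,dx\Big|\le \|z\|_{L^2_x}\|u\|_{L^\infty_x}\|w\|_{L^2_x}\lesssim \tilde{c}_s\big(\|w\|_{H^{1/2}_x}^2+\|z\|^2_{L^2_x}\big).
\]
Here $u=u_1+u_2$, so $\|u\|_{H^{s+\frac12}}\le 2\tilde c_s$. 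Choosing $\tilde{c}_s$ small enough, this term is absorbed into the quadratic part. This gives the two-sided bound with constants $\frac12,\frac32$, after the harmless normalization of the coefficients of $\|w\|^2_{L^2}+\|D^{1/2}_xw\|^2_{L^2}\sim\|w\|^2_{H^{1/2}}$ and $\frac12\|z\|^2_{L^2}$.

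\textbf{Differential inequality.} Differentiate $\tilde{E}^0_c$ and use \eqref{Schr-ILW difference}, which gives \eqref{classic tilde energy 0}. The only term there needing comment is $\|v_x\|_{L^\infty}\|z\|^2_{L^2}$. It comes from $\frac{\rho}{2}\int z(vz)_x=\frac{\rho}{4}\int v_xz^2$, and $\|v_x\|_{L^\infty}\le\|\partial_xv_1\|_{L^\infty}+\|\partial_xv_2\|_{L^\infty}$. The dangerous terms $J$ and $K$ lose derivatives. They are paired with the time derivative of the correction, computed in \eqref{0,v}, \eqref{porra} and \eqref{porra2}. The cancellations already recorded then apply:
\begin{itemize}
\item $J+II_1$ is bounded through $\|\tilde F\|_{L^\infty}<\infty$, which uses the symbol of $\mathscr{G}_\delta$;
\item $II_{3,1,1,2}+II_{3,2,1,1}=0$;
\item $K+II_{3,1,1,2,2}$ is bounded via $G\in L^\infty$;
\item the commutator terms are controlled by Lemma \ref{LiLeibniz}.
\end{itemize}

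All remaining pieces ($II_2$, $II_{3,1,1,1}$, $II_{3,1,1,2,1}$, $II_{3,1,2}$, $II_{3,1,3}$, $II_{3,2,2}$, $II_{3,2,3}$) are bounded by $C(\tilde c_s)\,\|w\|_{H^{1/2}}\|z\|_{L^2}$ or $C(\tilde c_s)\|w\|^2_{L^2}$. Three facts are used for this:
\begin{itemize}
\item $\|u\|_{L^\infty},\|v\|_{L^\infty}\lesssim\tilde c_s$, by Sobolev embedding when $s>\frac12$;
\item $\|u_x\|_{L^{2+}}\lesssim\|u\|_{H^{s+1/2}}$;
\item $\|J^{-1}_xz\|_{L^2}\le\|z\|_{L^2}$.
\end{itemize}
For the cubic terms $\frac14|w|^2w$, the factor $w$ is bounded in $L^\infty$ by $\|u_1\|+\|u_2\|$. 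Using $ab\le a^2+b^2$ together with coercivity then yields
\[
\frac{d}{dt}\tilde E^0_m\lesssim\big(1+\|\partial_xv_1\|_{L^\infty}+\|\partial_xv_2\|_{L^\infty}\big)\tilde E^0_m .
\]

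\textbf{Main obstacle.} The main obstacle is bookkeeping rather than a new idea. The time derivative of the correction must exactly cancel the top-order parts of $J$ and $K$, and this requires the parameter choices $\alpha=1$ and $\beta=\nu$. The difference between $\mathscr{G}_\delta$ and $\mathscr{H}$ must also be a bounded multiplier, which is precisely the role of $\tilde F$.

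A secondary point: the statement only requires $(u_i,v_i)\in C([0,T];H^{s+1/2}\times H^s)$. The differentiation in time should therefore be justified first for smooth solutions, or via a regularization argument, and then passed to the limit. The hypotheses $s>\frac12$ inherited from Proposition \ref{cinco} ensure the $L^\infty$ bounds used above.
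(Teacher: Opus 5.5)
Your proposal is correct and follows essentially the paper's argument: the same correction term $\text{Re}\int_{\mathbb{R}} J^{-1}_xz\,\overline{u}w\,dx$ is used, with $J+II_1$ controlled through $\tilde{F}\in L^\infty$, the cancellation $II_{3,1,1,2}+II_{3,2,1,1}=0$, $K$ paired with the $D^{3/2}_xJ^{-1}_x$ term via $G\in L^\infty$, and smallness, Sobolev embedding and Lemma \ref{LiLeibniz} handling the remaining terms. Your additional remarks are sound refinements rather than departures: the cancellations require $\alpha=1$, $\beta=\nu$ (not stated in the proposition), the lower coercivity constant needs a slight adjustment because of the coefficient $\frac{1}{2}$ on $\|z\|_{L^2_x}^2$, and the time differentiation should be justified by regularization.
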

\subsubsection{\label{eels}Energy Estimates at the level $s>\frac{1}{2}$}
Arguing as in the proof of inequalities $(3.42)$, $(3.43)$, $(3.44)$,  $(3.45)$, $(3.46)$, $(3.47)$, $(3.48)$, $(3.49)$, $(3.50)$ and $(3.51)$, we obtain 
\begin{equation}
    \label{classic tilde energy s} 
    \begin{split}
\frac{d}{dt}\tilde{E}_{c}^s (t) \lesssim &\left\|v_x\right\|_{L^\infty_x}\left\|z\right\|_{H^s_x}^2+\|z_x\|_{L^\infty_x}\|z\|_{H^s_x}+\|v_x\|_{L^\infty_x}\|w\|_{H^{s+\,{1/2}}_x}^2\\+&\left\|w\right\|_{H^{s+\,{1/2}}_x}^2+\left\|w\right\|_{H^{s+\,{1/2}}_x}\left\|z\right\|_{H^s_x}+ J + K,
    \end{split}
 \end{equation}
 where 
 \begin{equation*}
     J:=\nu\int_{\mathbb{R}}D^s_xz\,\partial_xD^s_x\left(\overline{u}w\right)\,dx,\, \, \, \text{and}\, \, \, K:=\text{Im}\int_{\mathbb{R}}uD^s_x\overline{w}D^s_xz\,dx.
 \end{equation*} The terms $J$ and $K$ cannot be handle directly and will be compensated with other terms coming from a modified term of the energy. Indeed, using that $\partial_x^2=-D_x^2$, applying $D^{s-\frac{1}{2}}_x$ to the second equation of \eqref{Schr-ILW difference}, multiplying by $D^{s-\frac{1}{2}}_x\left(\overline{u}w\right)$, integrating and taking the imaginary part,  
\begin{multline*}
    \text{Re}\int_{\mathbb{R}}D^{s-\frac{1}{2}}_x\left(\overline{u}w\right)\partial_tD^{s-\frac{1}{2}}_xz\,dx+\nu\int_{\mathbb{R}}D^{s-\frac{1}{2}}_x\left(\overline{u}w\right)\mathscr{G}_\delta D^{s+\frac{3}{2}}_xz\,dx\\+\frac{\rho}{2}\text{Re}\int_{\mathbb{R}}D^{s-\frac{1}{2}}\left(\overline{u}w\right)\partial_x D^{s-\frac{1}{2}}_x(vz)\,dx = \nu \text{Re}\int_{\mathbb{R}}D^{s-\frac{1}{2}}_x\left(\overline{u}v\right)\partial_xD^{s-\frac{1}{2}}_x(vz)\,dx, 
\end{multline*}
which implies 
\begin{multline}
    \label{5i} \frac{d}{dt}\text{Re}\int_{\mathbb{R}}D^{s-\frac{1}{2}}_x\left(\overline{u}w\right)D^{s-\frac{1}{2}}_xz\,dx = -\nu\text{Re}\int_\mathbb{R}D^{s}_x\left(\overline{u}w\right)\mathscr{G}_\delta D^{s+1}_x z\,dx\\ -\frac{\rho}{2}\text{Re}\int_\mathbb{R}D^{s-\frac{1}{2}}_x\left(\overline{u}w\right)\partial_x D^{s-\frac{1}{2}}_x(vz)\,dx\\ + \text{Re}\int_\mathbb{R}D^{s-\frac{1}{2}}_x\partial_t\left(\overline{u}w\right)D^{s-\frac{1}{2}}_xz\,dx:= IV_1+IV_2+IV_3. 
\end{multline}
Note that
\begin{align*}
    IV_1+II_3 &= \nu\, \text{Re}\int_\mathbb{R}D_x\left(\mathscr{H}-\mathscr{G}_\delta\right)D^{s}_x z\,D^s_x\left(\overline{u}w\right)dx\lesssim \|u\|_{H^s_x}\|z\|_{H^s_x}\|w\|_{H^s_x}.
\end{align*} Moreover,
\begin{align*}
    IV_2 &=\frac{\rho}{2}\text{Re}\int_\mathbb{R}D^s_x(\overline{u}w)\mathscr{H}D^s_x(vz)\,dx\lesssim \|u\|_{H^s_x}\|v\|_{H^s_x}\|z\|_{H^s_x}\|w\|_{H^s_x}.
\end{align*}
On the other hand, 
\begin{align*}
    IV_3 = \text{Re}\int_\mathbb{R}D^{s-\frac{1}{2}}_x\left(\overline{u}_tw\right)D^{s-\frac{1}{2}}_xz\,dx\,+\,\text{Re}\int_\mathbb{R}D^{s-\frac{1}{2}}_x\left(\overline{u}w_t\right)D^{s-\frac{1}{2}}_x\,dx:= IV_{3,1}+IV_{3,2}. 
\end{align*}
Using the first equation of \eqref{Schr-ILW} for $u_1$ and $u_2$, we get 
\begin{multline*}
    \label{4iii}IV_{3,1}= \text{Im}\int_\mathbb{R}D^{s-\frac{1}{2}}_x\left(\overline{u}_{xx}w\right)D^{s-\frac{1}{2}}_xz\,dx-\text{Im}\int_\mathbb{R}D^{s-\frac{1}{2}}_x\left(\overline{u_1}v_1+u_2v_2\right)D^{s-\frac{1}{2}}_xz\\-\gamma\text{Im}\int_\mathbb{R}D^{s-\frac{1}{2}}_x\left(|u_1|^2\overline{u_1}-|u_2|^2\overline{u_2}\right)D^{s-\frac{1}{2}}_xz\,dx:=IV_{3,1,1}+IV_{3,1,2}+IV_{3,1,3}.
\end{multline*} Taking into account that $\partial_x\left(\overline{u}_xw\right)=\overline{u}_xxw+\overline{u}_xw_x$, and using $\partial_x=-\mathscr{H}D_x$, we get
\begin{align*}
    IV_{3,1,1}&= -\text{Im}\int_\mathbb{R}\Big(\mathscr{H}D^{s}_x\left(\overline{u}_xw\right)D^{s}_xz+D^{s-\frac{1}{2}}_x\big(\overline{u}_xw_x\big)D^{s-\frac{1}{2}}_xz\Big)dx := IV_{3,1,1,1}+IV_{3,1,1,2}.
\end{align*}
Now, observing that 
\begin{align*}
    \overline{u}_xw=-\left(\mathscr{H}D^\frac{1}{2}_xD^\frac{1}{2}_x\overline{u}\right)w
    &=\mathscr{H}D^\frac{1}{2}_x\left(wD^\frac{1}{2}_x\overline{u}\right)-\left(\mathscr{H}D^\frac{1}{2}_xD^\frac{1}{2}_x\overline{u}\right)w-\mathscr{H}D^\frac{1}{2}_x\left(wD^\frac{1}{2}_x\overline{u}\right)\\
    &= \big[\mathscr{H}D^{1/2}_x;w\big]D^{1/2}_x\overline{u}-\mathscr{H}D^{1/2}_x\big(wD^{1/2}_x\overline{u}\big),
\end{align*}
we obtain 
\begin{align*}
&IV_{3,1,1,1}\\ = &-\text{Im}\int_\mathbb{R}D^{s+\,{1/2}}_x\big(wD^{1/2}_x\overline{u}\big)D^s_xz\,dx-\text{Im}\int_\mathbb{R}D^s_xz\,\mathscr{H}D^s_x\big[\mathscr{H}D^{1/2}_x;w\big]D^{1/2}_x\overline{u}\,dx\\
= &-\text{Im}\int_\mathbb{R}\big[D^{s+\,{1/2}}_x;w\big]D^{1/2}_x\overline{u}\,\,D^s_xz\,dx-\text{Im}\int_\mathbb{R}wD^{s+1}_x\overline{u}D^s_xz\,dx\\
&-\text{Im}\int_\mathbb{R}\mathscr{H}\big[\mathscr{H}D^{s+\,{1/2}}_x;w\big]D^{1/2}_x\overline{u}\,D^s_xz\,dx-\text{Im}\int_\mathbb{R}\mathscr{H}\left[D^s_x;w\right]\overline{u}_x\,D^s_xz\,dx\\
&:= IV_{3,1,1,1,1}+\cdots+IV_{3,1,1,1,4}.
\end{align*}
Using Cauchy-Schwarz, Lemma \ref{LiLeibniz} and Sobolev embedding, we get 
\begin{align*}
    IV_{3,1,1,1,1}+IV_{3,1,1,1,3}&\lesssim \left\|D^s_xz\right\|_{L^2_x}\big(\|D^{s+\,{1/2}}_xz\|_{L^2_x}\|D^{1/2}_x\overline{u}\|_{L^\infty_x}+\|w_x\|_{L^{2+}}\|D^s_x\overline{u}\|_{L^{\infty-}_x}\big)\\
    &\lesssim \|u\|_{H^{s+\frac{1}{2}}_x}^2\|w\|_{H^{s+\frac{1}{2}}_x} \left\|z\right\|_{H^s_x},
\end{align*}
\begin{align*}
IV_{3,1,1,1,2}\leq\left\|D^{s+1}_xu\right\|_{L^\infty_x}\|w\|_{H^{s+\frac{1}{2}}_x}\|z\|_{H^s_x},
\end{align*}
and 
\begin{align*}
    IV_{3,1,1,1,4} &\lesssim \left\|D^s_xz\right\|_{L^2_x}\left(\left\|D^s_xw\right\|_{L^{\infty-}_x}\|\overline{u}_x\|_{L^{2+}_x}+\|w_x\|_{L^{2+}_x}\left\|D^{s-1}_x\partial_x\overline{u}\right\|_{L^{\infty-}_x}\right)\\
    &\lesssim \|u\|_{H^{s+\frac{1}{2}}_x}\|w\|_{H^{s+\frac{1}{2}}_x}\|z\|_{H^s_x}.
\end{align*} Therefore, 
\begin{align*}
    IV_{3,1,1,1} \lesssim \left(\|D^{s+1}_xu\|_{L^\infty_x}+\|u\|_{H^{s+{1/2}}_x}\big(1+\|u\|_{H^{s+{1/2}}_x}\big)\right)\|w\|_{H^{s+{1/2}}_x}\|z\|_{H^s_x}.
\end{align*} The term $IV_{3,1,1,2}$ will be canceled with another term coming from $IV_{3,2}$. 
Using that $H^s(\mathbb{R})$ is a Banach algebra for all $s>\frac{1}{2}$, we get
\begin{equation*}
    IV_{3,1,2}\lesssim \|u\|^3_{H^s_x}\|z\|_{H^s_x}. 
\end{equation*}
Now, from the second equation of \eqref{Schr-ILW difference}, we get 
\begin{multline}
    \label{432} IV_{3,2}=-\text{Im}\int_\mathbb{R}D^{s-\,{1/2}}_x\left(\overline{u}w_{xx}\right)D^{s-\,{1/2}}_xz\,dx\\+\frac{1}{2}\text{Im}\int_\mathbb{R}D^{s-\,{1/2}}_x\left(vw+uz\right)D^{s-\,{1/2}}_xz\,dx\\+\gamma\text{Im}\int_\mathbb{R}D^{s-\,{1/2}}_x\Big(\frac{1}{4}|w|^2w+\frac{1}{2}|u|^2w+\frac{1}{4}u^2\overline{w}\Big)D^{s-\,{1/2}}_xz\,dx\\:= IV_{3,2,1}+IV_{3,2,2}+IV_{3,2,3}.
\end{multline}
Note that 
\begin{align*}
    IV_{3,2,1} &= -\text{Im}\int_\mathbb{R}D^{s-\frac{1}{2}}_x\partial_x\left(\overline{u}w_x\right)D^{s-\frac{1}{2}}_xz\,dx+\text{Im}\int_\mathbb{R}D^{s-\frac{1}{2}}_x\left(\overline{u}_xw_x\right)D^{s-\frac{1}{2}}_xz\,dx\\
    &= \text{Im}\int_\mathbb{R}\mathscr{H}D^s_x\left(\overline{u}w_x\right)D^s_xz\,dx+\text{Im}\int_\mathbb{R}D^{s-\frac{1}{2}}_x\left(\overline{u}_xw_x\right)D^{s-\frac{1}{2}}_xz\,dx:= IV_{3,2,1,1}+IV_{3,2,1,2}.
\end{align*}
Note that $IV_{3,2,1,2}+IV_{3,1,1,2}=0$. On the  other hand, 
\begin{align*}
    IV_{3,2,1,1}&=\text{Im}\int_\mathbb{R}\mathscr{H}\big[\mathscr{H}D^{s+\,{1/2}}_x;\overline{u}\big]D^{1/2}_xw\,D^s_xz\,dx+\text{Im}\int_\mathbb{R}\mathscr{H}[D^s_x;\overline{u}]w_x\,D^s_xz\,dx\\&+\text{Im}\int_\mathbb{R}\big[D^{s+\,{1/2}}_x;\overline{u}\big]D^{1/2}_xw\,D^s_xz\,dx+\text{Im}\int_\mathbb{R}\overline{u}D^{s+1}_xuD^s_xz\,dx\\
    &:= IV_{3,2,1,1,1}+\cdots+IV_{3,2,1,1,4}.
\end{align*}
Note that $IV_{3,2,1,1,4}+J=0$. On the other hand, 
\begin{align*}
    IV_{3,2,1,1,1}+IV_{3,2,1,1,3} &\lesssim \left\|D^s_xz\right\|_{L^2_x}\big(\big\|D^{s+{1/2}}_xu\big\|_{L^2_x}\big\|D^{1/2}_xw\big\|_{L^\infty_x}+\left\|D^s_xw\right\|_{L^{\infty-}_x}\|u_x\|_{L^{2+}_x}\big)\\&\leq\|u\|_{H^{s+{1/2}}_x}\|z\|_{H^s_x}\|w\|_{H^{s+{1/2}}_x}.
\end{align*}
Moreover,
\begin{align*}
    IV_{3,2,1,1,2}&\lesssim \left\|D^s_xz\right\|_{L^2_x}\left(\left\|D^s_xu\right\|_{L^{\infty-}_x}\|w_x\|_{L^{2+}_x}+\|u_x\|_{L^{2+}_x}\left\|D^s_xw\right\|_{L^{\infty-}_x}\right)\\&\lesssim \|u\|_{H^{s+{1/2}}_x}\|z\|_{H^s_x}\|w\|_{H^{s+{1/2}}_x}. 
\end{align*}
\begin{proposition}
    \label{sete}Let $s>\frac{1}{2}$ and $T>0$. There exists a positive constant $\tilde{c}_s$ such that for any $(u_1,v_1),(u_2,v_2)\in C\big([0,T];H^{s+\,{1/2}}(\mathbb{R})\times H^s(\mathbb{R})\big)$ solutions of \eqref{Schr-ILW} satisfying 
    \begin{equation*}
        \|(u_i(t),v_i(t))\|_{H^{s+\,{1/2}}_x\times H^s_x} \leq \tilde{c_s}, \, \, i\in\{1,2\},
    \end{equation*}
    the following estimates hold true:
    \begin{equation}
        \frac{1}{2}\big(\|w(t)\|^2_{H^{s+\,{1/2}}_x}+\|z(t)\|_{H^s_x}^2\big)\leq\tilde{E}_m^0(t)\leq\frac{3}{2}\big(\left\|w(t)\right\|^2_{H^{s+\, {1/2}}_x}+\|z(t)\|_{H^s_x}^2\big),
    \end{equation}
    and 
    \begin{equation}
        \frac{d}{dt}\tilde{E}^sm(t)\lesssim (1+\|\partial_xv_1(t)\|_{L^\infty_x}+\|\partial_xv_2(t)\|_{L^\infty_x})\tilde{E}^0_m(t)+f_s(t),
    \end{equation}
    where $f_s=f_s(t)$ is defined by 
    \begin{multline}
        f_s(t)= \|D^s_x\partial_xv\|_{L^\infty}\|D^s_xz\|_{L^2}\|z\|_{L^2}+\|D^s_xv\|_{L^2_x}\|D^s_xz\|_{L^2}\|\partial_xz\|_{L^\infty}\\
        +\left\|D^{s+\,{1/2}}_xv\right\|_{L^\infty}\|w\|_{L^2}\|D^{s+\,{1/2}}_xw\|_{L^2}+\left\|D^{s+1}u\right\|_{L^\infty}\|w\|_{L^2}\left\|D^s_xz\right\|_{L^2}. 
    \end{multline}
\end{proposition}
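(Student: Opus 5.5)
This is an energy estimate for the difference of two solutions at regularity $s>\frac12$, modelled on the $s=0$ case (Proposition \ref{meia}) and on Proposition 3.2 of \cite{linares2024well}. The plan has four steps: prove coercivity of $\tilde E^s_m$; collect the classical energy bound \eqref{classic tilde energy s}; add the time derivative \eqref{5i} of the correction term; and show that every term that cannot be bounded directly cancels against a counterpart, with the remainder controlled by $(1+\|\partial_xv_1\|_{L^\infty}+\|\partial_xv_2\|_{L^\infty})\tilde E_m^s+f_s$.

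\emph{Coercivity.} I would estimate the correction term by Cauchy--Schwarz together with the algebra property of $H^{s-\frac12}$ (or of $H^s$ when $s-\frac12\le \frac12$, using a fractional Leibniz bound):
\begin{equation*}
\Big|\mathrm{Re}\int D^{s-\frac12}_xz\,D^{s-\frac12}_x(\overline{u}w)\,dx\Big|\lesssim \|z\|_{H^s}\|u\|_{H^{s+\frac12}}\|w\|_{H^{s+\frac12}}\lesssim \tilde c_s\big(\|w\|^2_{H^{s+\frac12}}+\|z\|^2_{H^s}\big).
\end{equation*}
Here $u=u_1+u_2$ is small by hypothesis. Choosing $\tilde c_s$ small then gives the two-sided bound with constants $\frac12$ and $\frac32$.

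\emph{Cancellations.} Differentiating $\tilde E^s_m$, I would split off the terms $J$ and $K$ from \eqref{classic tilde energy s} and decompose $IV_1,IV_2,IV_3$ as already begun in the paper. The required cancellations are:
\begin{itemize}
\item $IV_{3,2,1,2}+IV_{3,1,1,2}=0$;
\item $IV_{3,2,1,1,4}+J=0$;
\item $K$ against $IV_{3,1,1,1,2}$, after rewriting $wD^{s+1}_x\overline{u}$ (and its symmetric counterpart) via $\partial_x=-\mathscr H D_x$;
\item $IV_1$ against the dispersive part of $J$. The difference involves $D_x(\mathscr H-\mathscr G_\delta)$, whose symbol is bounded because $|\xi|\,|\sgn\xi-\coth(\delta\xi)+\frac1{\delta\xi}|$ is bounded, as in the computation of $F_\delta$ in Proposition \ref{cinco}.
\end{itemize}
Every remaining piece is a commutator handled by Lemma \ref{LiLeibniz} and the Sobolev embeddings, giving bounds $\lesssim \|u\|_{H^{s+\frac12}}(1+\|u\|_{H^{s+\frac12}})\|w\|_{H^{s+\frac12}}\|z\|_{H^s}$. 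This is $\lesssim\tilde E_m^s$ by Young's inequality. The terms $IV_2$, $IV_{3,1,2}$, $IV_{3,1,3}$, $IV_{3,2,2}$ and $IV_{3,2,3}$ are multilinear with at least one derivative to spare and are bounded the same way.

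\emph{Main obstacle: terms producing $f_s$.} The difficulty is the terms in which a high derivative falls on the sum $v=v_1+v_2$ or $u=u_1+u_2$ rather than on $(w,z)$. In the $z$-energy, the transport term $\frac{\rho}2\int D^s_x z\,D^s_x\partial_x(vz)$ gives, by the Kato--Ponce commutator estimate, the piece $\|\partial_xv\|_{L^\infty}\|D^s_xz\|^2$ plus $\|D^s_x\partial_x v\|_{L^\infty}\|z\|_{L^2}\|D^s_xz\|_{L^2}$. The symmetric splitting of $(vz)_x$ with $z=v_1-v_2$ gives $\|D_x^sv\|_{L^2}\|\partial_xz\|_{L^\infty}\|D^s_xz\|_{L^2}$. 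These cannot be absorbed into $\tilde E^s_m$, since that would cost $s+1$ derivatives on the solutions, so they are kept explicitly in $f_s$.

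Likewise, in the $w$-energy, the terms $vw$ and $uz$ with all derivatives on $v$ or $u$ yield $\|D^{s+\frac12}_xv\|_{L^\infty}\|w\|_{L^2}\|D^{s+\frac12}_xw\|_{L^2}$ and $\|D^{s+1}_xu\|_{L^\infty}\|w\|_{L^2}\|D^s_xz\|_{L^2}$. The last of these absorbs $IV_{3,1,1,1,2}$.

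Carefully tracking which low-norm factors ($\|w\|_{L^2}$, $\|z\|_{L^2}$) remain is the delicate step. I would organize it by always placing the full $D^s$ on a single factor and using commutators for the rest. Collecting all bounds then gives the stated differential inequality.
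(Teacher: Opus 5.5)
Your architecture matches the paper's: coercivity from the smallness of $u=u_1+u_2$, cancellation of the derivative-losing terms, and the terms with surplus derivatives on the solutions collected in $f_s$. However, the cancellation ledger, which is the entire content of a modified-energy estimate, does not work as you state it.

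\emph{$J$ is spent twice.} We have $J+IV_1=\nu\,\mathrm{Re}\int D_x(\mathscr{H}-\mathscr{G}_\delta)D^s_xz\,D^s_x(\overline{u}w)\,dx$ with nothing left over, so $J$ is not available to cancel $IV_{3,2,1,1,4}$.

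\emph{$K$ cannot be cancelled by $IV_{3,1,1,1,2}$.} Modulo commutators, $K=\mathrm{Im}\int u\,D^{s+1}_x\overline{w}\,D^s_xz\,dx$ carries its surplus half derivative on $w\in H^{s+\frac12}$. By contrast, $IV_{3,1,1,1,2}=-\mathrm{Im}\int w\,D^{s+1}_x\overline{u}\,D^s_xz\,dx$ carries it on $u$. Rewriting with $\partial_x=-\mathscr{H}D_x$ does not transfer derivatives from $\overline{u}$ to $w$. You also send $IV_{3,1,1,1,2}$ to $f_s$, which is its correct fate, so it too is counted twice.

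Once $J$ is used against $IV_1$ and $IV_{3,1,1,1,2}$ is placed in $f_s$, your scheme leaves $K$ and $IV_{3,2,1,1,4}$ uncontrolled. Neither is bounded by $\tilde E^s_m$, and neither has any of the four forms allowed in $f_s$.

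The missing observation is that these two leftovers cancel each other. The term $\overline{u}\,w_{xx}$ in $\partial_t(\overline{u}w)$ produces $IV_{3,2,1,1}=\mathrm{Im}\int\mathscr{H}D^s_x(\overline{u}w_x)\,D^s_xz\,dx$. Since $\mathscr{H}\partial_x=D_x$, its principal part is $IV_{3,2,1,1,4}=\mathrm{Im}\int\overline{u}\,D^{s+1}_xw\,D^s_xz\,dx=-K$. The two matchings ($K$ with $IV_{3,2,1,1,4}$, and $J$ with $IV_1$) are exactly what force the coefficients $1$ on $\|D^{s+\frac12}_xw\|^2_{L^2}$ and $\frac12$ on $\|D^s_xz\|^2_{L^2}$ in $\tilde E^s_m$, given $\alpha=1$ and $\beta=\nu$.

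The correct ledger is therefore:
\begin{itemize}
\item $J+IV_1$ is bounded;
\item $K+IV_{3,2,1,1,4}=0$;
\item $IV_{3,1,1,2}+IV_{3,2,1,2}=0$;
\item $|IV_{3,1,1,1,2}|\le\|D^{s+1}_xu\|_{L^\infty}\|w\|_{L^2}\|D^s_xz\|_{L^2}$, which is absorbed in $f_s$.
\end{itemize}
The line ``$IV_{3,2,1,1,4}+J=0$'' that you reproduced is a misprint in the paper for $K$, just as its ``$IV_1+II_3$'' means $IV_1+J$.

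Relatedly, the $\|D^{s+1}_xu\|_{L^\infty}$ term in $f_s$ does not come from the $uz$ coupling in the $w$-energy. There, placing all derivatives on $u$ gives only $\|D^{s+\frac12}_xu\|_{L^2}\|z\|_{L^\infty}\|D^{s+\frac12}_xw\|_{L^2}$, which is harmless for $s>\frac12$. That term arises solely from $\overline{u}_{xx}$ in $\partial_t\overline{u}$ inside the correction term, i.e.\ from $IV_{3,1,1,1,2}$.
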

\begin{remark}
    The terms gathered in $f_s(t)$ cannot be estimated directly, but they always have more derivatives on the functions $u_i,v_i$ than on the terms for $w$ and $z$. These terms will be handled with the Bona-Smith argument as in Proposition 2.18 in \cite{linares2014dispersive}. 
\end{remark}

\subsection{Well-posedness for Smooth Initial Data}

The following result is based on energy estimates derived in the last subsections. \\

\begin{proposition}
        \label{Th1} Let $s>\frac{3}{2}$. In the system \eqref{Schr-ILW}, suppose that $\alpha=1$ and $\beta=\nu$. For any $(u_0,v_0)\in H^{s+\frac{1}{2}}(\mathbb{R})\times H^s(\mathbb{R})$, there exist a positive time $T=T\big(\left\|(u_0,v_0)\right\|_{H^{s+\frac{1}{2}}\times H^s}\big)$ and a unique maximal solution $(u,v)$ of the system \eqref{Schr-ILW} in $C\left([0,T^*],\,H^{s+\,{1/2}}(\mathbb{R})\times H^s(\mathbb{R})\right)$, with $T^*>T$, such that $(u(0),v(0))=(u_0,v_0)$. If the maximal time of existence $T^*$ is finite, then $$\lim_{t\uparrow T^*}\|(u(t),v(t))\|_{H^{s+\frac{1}{2}}\times H^s}=+\infty.$$
        Moreover, for any $0<T'<T$, there exists a neighborhood $\mathscr{U}$ of $(u_0,v_0)$ in $H^{s+\frac{1}{2}}(\mathbb{R})\times H^s(\mathbb{R})$ such that the flow map data to solution 
        $$\mathcal{S}: \mathscr{U}\mapsto C\big([0,T^*],\,H^{s+\,{1/2}}(\mathbb{R})\times H^s(\mathbb{R})\big), (\tilde{u}_0,\tilde{v}_0)\mapsto(\tilde{u},\tilde{v})$$
        is continuous. 
    \end{proposition}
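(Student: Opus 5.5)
The statement to prove is Proposition \ref{Th1}: local well-posedness of \eqref{Schr-ILW} for smooth data, $s>\frac{3}{2}$. I would use the classical compactness--energy method of Bona--Smith type, relying on Propositions \ref{cinco}, \ref{meia} and \ref{sete}.

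\textbf{Step 1: regularized problem.} I would first construct solutions of a parabolic regularization, adding $-\mu\partial_x^4 u$ (or $i\mu\partial_x^2$-type damping) to the first equation and $-\mu\partial_x^2 v$ to the second. For fixed $\mu>0$ this is solvable by a contraction in $C([0,T_\mu];H^{s+\frac12}\times H^s)$. Its solutions are smooth for $t>0$.

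\textbf{Step 2: uniform bounds.} I would redo the modified-energy computation for the regularized system. The dissipative terms come with a good sign for $E^s_m$ up to harmless lower order terms, because the correction term $\int D^{s-\frac12}_xv\,D^{s-\frac12}_x(|u|^2)$ is controlled by $E^s_c$. Since $s>\frac32$, Sobolev embedding gives $\|\partial_xv\|_{L^\infty}\lesssim\|v\|_{H^s}$. Then \eqref{energy estimate} closes as a Riccati-type inequality, giving a $\mu$-independent time $T=T(\|(u_0,v_0)\|)$ and uniform bounds.

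The smallness hypothesis \eqref{peqsol} used for coercivity \eqref{coercivity} is the delicate point here. I would remove it in one of two ways. The first is the scaling of Remark \ref{scalingch}, which makes the data small at the price of changing $\delta$ to $\lambda\delta$; the energy constants depend on $\delta$ only through $\|F_\delta\|_{L^\infty}=\delta^{-1}$, so this has to be tracked. The second, which I consider cleaner, is to replace the correction term by $\epsilon$ times it, or to add a large multiple of $\|u\|_{L^2}^2$ plus $\|v\|_{L^2}^2$, so that coercivity holds for data of any size.

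\textbf{Step 3: passage to the limit.} Using Proposition \ref{meia} (the $s=0$ difference estimate), the regularized solutions form a Cauchy sequence in $C([0,T];H^{\frac12}\times L^2)$ as $\mu\to0$. Interpolating with the uniform $H^{s+\frac12}\times H^s$ bounds gives a limit solution in $L^\infty_T(H^{s+\frac12}\times H^s)\cap C_T(H^{s'+\frac12}\times H^{s'})$ for every $s'<s$. Uniqueness in this class follows from Proposition \ref{meia} and Gronwall, since $\|\partial_xv_j\|_{L^1_TL^\infty_x}<\infty$.

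\textbf{Step 4: continuity and maximality.} For continuity in time with values in the top space, and for continuity of the flow map, I would run the Bona--Smith argument. Regularize the data as $(u_0^\varepsilon,v_0^\varepsilon)=\rho_\varepsilon*(u_0,v_0)$, which satisfy $\|(u_0^\varepsilon,v_0^\varepsilon)\|_{H^{s+\frac12+k}}\lesssim\varepsilon^{-k}$ and $\|(u_0-u_0^\varepsilon,v_0-v_0^\varepsilon)\|_{L^2}=o(\varepsilon^{s})$. Then apply Proposition \ref{sete} to pairs of such solutions. The extra term $f_s$ places $s+1$ (or $s+\frac32$) derivatives on the smooth solution, so it grows at most like $\varepsilon^{-1}$ or $\varepsilon^{-\frac32}$. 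This growth is compensated by the $L^2$-level difference bound $o(\varepsilon^{s})$ from Proposition \ref{meia}, which is why $s>\frac32$ is needed. The result is that $\{(u^\varepsilon,v^\varepsilon)\}$ is Cauchy in $C([0,T];H^{s+\frac12}\times H^s)$, which yields $C_T$ regularity. The same argument gives continuous dependence on a neighborhood $\mathscr U$.

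The blow-up alternative follows because the existence time depends only on the norm of the data.

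\textbf{Main obstacle.} I expect the main obstacle to be the Bona--Smith estimate for $f_s$. The term $\|D^{s+1}_xu\|_{L^\infty}\|w\|_{L^2}\|D^s_xz\|_{L^2}$ requires $\|D^{s+1}_xu^\varepsilon\|_{L^\infty}\lesssim\|u^\varepsilon\|_{H^{s+\frac32+}}\lesssim\varepsilon^{-1-}$. Balancing this exponent against the $o(\varepsilon^{s+\frac12})$ decay of $\|w\|_{L^2}$ is what I would check first.
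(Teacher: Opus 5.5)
Your proposal follows the same route as the paper, whose proof of Proposition~\ref{Th1} combines parabolic regularization with the energy estimates of Propositions~\ref{cinco}, \ref{meia} and \ref{sete} for existence and uniqueness, uses the Bona--Smith argument for persistence and continuous dependence, and defers all details to the cited literature. The one thing to fix is in Step~2: because $\alpha=1$ and $\beta=\nu$ tie the coefficient of the correction term to those of $\|D^{s+\frac12}_xu\|_{L^2}^2$ and $\frac12\|D^s_xv\|_{L^2}^2$, multiplying it by $\epsilon$ would spoil the cancellations of $K$ and $J$, which lose half a derivative, so use your other options instead---adding a multiple of $\|u\|_{L^2}^2$ depending on $\|u_0\|_{L^2}$ works because interpolation gives $\bigl|\int D^{s-\frac12}_xv\,D^{s-\frac12}_x(|u|^2)\,dx\bigr|\le\eta\bigl(\|D^{s+\frac12}_xu\|_{L^2}^2+\|D^s_xv\|_{L^2}^2\bigr)+C_\eta\|u\|_{L^2}^{q}$ for some power $q$ with $\|u\|_{L^2}$ non-increasing, while the scaling of Remark~\ref{scalingch} also replaces $\alpha$ by $\lambda\alpha$ and therefore requires reweighting the energy.
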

\begin{proof}[Proof]
    The proof of the existence and the uniqueness is a combination of the parabolic regularization method with the energy estimates obtained in Propositions \ref{sete}, \ref{meia} and \ref{cinco}. We refer to the proof of Theorem 1.6 in \cite{paulsen2022long} for the details in a similar setting. The continuous dependence and persistence is obtained by applying the Bona-Smith approximation method. We refer to \cite{bona1975initial}, \cite{iorio1986cauchy}, \cite{linares2015introduction} and \cite{paulsen2022long} for the details. 
\end{proof}


    \bibliographystyle{plainnat}
    \bibliography{references}

\end{document}